\documentclass[11pt]{amsart}

\usepackage[dvipsnames, table]{xcolor}
\usepackage{amsthm}
\theoremstyle{plain}
\usepackage{amssymb}
\usepackage{marvosym}
\usepackage{bm}
\usepackage{mathrsfs}
\usepackage{quiver}
\usepackage{enumerate}  
\usepackage{mathtools}
\usepackage{tikz-cd}
\usepackage{graphicx}
\usepackage{float}
\usepackage[titletoc,title]{appendix}
\usepackage{marginnote}
\usepackage[disable]{todonotes}
\usepackage{etoolbox}
\usepackage[all]{xy}
\makeatletter
\patchcmd{\Ginclude@eps}{"#1"}{#1}{}{}
\makeatother
\usepackage[outdir=./]{epstopdf}
\usepackage[numbers]{natbib}
\usepackage[utf8]{inputenc}
\usepackage[english]{babel}
\usepackage{changepage}
\usepackage{makecell}
\usepackage{enumitem}

\definecolor{lightblue}{HTML}{1F88CD}
\definecolor{lightgrey}{HTML}{727272}
\definecolor{lightblue2}{HTML}{009EC1}
\definecolor{mypink}{HTML}{FD00B0}
\definecolor{lightred}{HTML}{ff4d4d}

\usepackage{hyperref}
\hypersetup{
	colorlinks=true,
    linkcolor={OliveGreen},
    citecolor={blue},
	urlcolor={black}
}

\newtheorem*{theorem*}{Theorem}
\newtheorem{theorem}{Theorem}[section]

\newtheorem{lemma}[theorem]{Lemma}

\newtheorem{proposition}[theorem]{Proposition}
\theoremstyle{definition}

\theoremstyle{definition}
\newtheorem{definition}[theorem]{Definition}
\theoremstyle{definition}
\newtheorem{remark}[theorem]{Remark}
\theoremstyle{definition}

\theoremstyle{definition}

\theoremstyle{definition}

\theoremstyle{definition}

\theoremstyle{definition}

\theoremstyle{definition}
\newtheorem{question!}[theorem]{Question!}
\theoremstyle{definition}

\makeatletter
\newcommand*\sbt{\mathpalette\sbt@{.75}}
\newcommand*\sbt@[2]{\mathbin{\vcenter{\hbox{\scalebox{#2}{$\m@th#1\bullet$}}}}}
\makeatother

\newcommand{\sst}{\subset}

\let\emptyset\varnothing

\newcommand{\D}{\mathrm{D}}

\newcommand{\ZZ}{\mathbb{Z}}

\newcommand{\CC}{\mathbb{C}}

\newcommand{\ch}{\mathrm{ch}}

\newcommand{\pr}{\mathrm{pr}}

\newcommand{\RR}{\mathbb{R}}
\renewcommand\v{\mathbf v}
\newcommand\w{\mathbf w}

\DeclareMathOperator{\rk}{rk}

\DeclareMathOperator{\Coh}{\mathrm{Coh}}

\DeclareMathOperator{\Ext}{\mathsf{Ext}}

\DeclareMathOperator{\ext}{ext}

\DeclareMathOperator{\Gr}{Gr}

\newcommand{\cC}{\mathcal{C}}
\newcommand{\KK}{\mathrm{K}}
\newcommand{\cA}{\mathcal{A}}
\newcommand{\cE}{\mathcal{E}}

\newcommand{\cH}{\mathcal{H}}

\newcommand{\cK}{\mathcal{K}}
\newcommand{\cI}{\mathcal{I}}

\newcommand{\cT}{\mathcal{T}}
\newcommand{\cQ}{\mathcal{Q}}
\newcommand{\Ku}{\mathrm{Ku}}
\newcommand{\cP}{\mathcal{P}}
\newcommand{\cD}{\mathcal{D}}

\newcommand{\cM}{\mathcal{M}}

\DeclareMathOperator{\oh}{\mathcal{O}}
\DeclareMathOperator{\Hom}{\mathsf{Hom}}
\DeclareMathOperator{\RHom}{\mathsf{RHom}}

\newcommand{\zy}[1]{\textcolor{red}{#1}}

\usetikzlibrary{decorations.pathmorphing}

\title[A Note on Geometry of Bridgeland Moduli Spaces On Gushel-Mukai Threefolds]{A Note on Geometry of Bridgeland Moduli Spaces On Gushel-Mukai Threefolds}

\subjclass[2010]{Primary 14F05; secondary 14J45, 14D20, 14D23}
\keywords{Derived categories, Bridgeland stability conditions, Kuznetsov components, Gushel--Mukai threefolds, Moduli spaces.}

\author{Songtao Ma}
\address{Department of Mathematics, University of Michigan - Ann Arbor, Ann Arbor, Michigan 48104, United States}
\email{skenma@umich.edu}

\date{\today}

\begin{document}

\begin{abstract}
Let $X$ be a Gushel--Mukai threefold and let $\operatorname{Ku}(X)$ be its Kuznetsov component. We show that the Bridgeland moduli spaces of semistable objects in $\operatorname{Ku}(X)$ are normal. Moreover, under certain numerical conditions, if the numerical class is primitive and $X$ is general, we prove that the corresponding Bridgeland moduli space is integral.

\end{abstract}

\maketitle

{
\hypersetup{linkcolor=blue}
\setcounter{tocdepth}{1}
}

\section{Introduction}

\subsection{Background and motivation}
Let $X$ be a smooth complex Gushel--Mukai threefold. It is a prime Fano threefold of Picard rank one and degree ten, and its bounded derived category admits a semiorthogonal decomposition
\begin{equation}\label{eq:intro-sod}
    \mathrm{D}^{\mathrm b}(X)=\left\langle \operatorname{Ku}(X),\mathcal O_X,\mathcal E^{\vee}\right\rangle,
\end{equation}
where $\mathcal E$ is the Mukai bundle and $\operatorname{Ku}(X)$ is the Kuznetsov component of $X$; see \cite{DK18,kuznetsov2018derived}. The category $\operatorname{Ku}(X)$ is an Enriques category: its Serre functor has the form $S_{\operatorname{Ku}(X)}\simeq \tau[2]$, where $\tau$ is a nontrivial involutive autoequivalence. Moreover, the equivariant category associated with the $\mathbb Z/2$-action generated by $\tau$ is a Calabi--Yau category of dimension two. This Calabi--Yau cover plays, in the categorical setting, the same role as the K3 cover of a classical Enriques surface \cite{kuznetsov2018derived,PPZ23}.

Bridgeland stability conditions were introduced in \cite{bridgeland}. Stability conditions on Kuznetsov components were constructed in \cite{bayer2017stability}, and the construction for Gushel--Mukai varieties was developed in \cite{PPZ22}. For a GM threefold, these stability conditions are Serre-invariant \cite{Per21}, and all Serre-invariant stability conditions belong to a single $\widetilde{\mathrm{GL}}_2^+(\mathbb R)$-orbit \cite{JLLZ,FeyzbakhshPertusi2021stab}. Consequently, they determine the same stable and semistable objects and hence the same moduli spaces.

Fix a Serre-invariant stability condition $\sigma$ and a numerical class $v\in \mathrm{K}_{\mathrm{num}}(\operatorname{Ku}(X))$. The stack of $\sigma$-semistable objects of class $v$ admits a proper good moduli space, denoted by $M_\sigma(\operatorname{Ku}(X),v)$; the corresponding relative construction is available in families \cite{bayer2021stability,PPZ23}. Perry--Pertusi--Zhao proved that these moduli spaces are nonempty for every numerical class and that, when $v$ is primitive, they are smooth and projective of the expected dimension for a generic GM threefold \cite{PPZ23}. The purpose of this paper is to study a global property not supplied by those results, namely irreducibility.

A difference between the moduli spaces of stable vector bundle on derived category of
Enriques surfaces, is the possible presence of singular stable points.
Indeed, an odd-square Mukai vector on an Enriques surface has odd rank,
and the determinant prevents a stable object $E$ from satisfying
$E\simeq E\otimes\omega_Y$; hence
$\operatorname{Ext}^2(E,E)=0$, and the stable moduli space is smooth
\cite[Remark~8.3]{Nue16b}. For $\operatorname{Ku}(X)$, has one no analogous determinant obstruction to
$E\simeq\tau E$. Thus moduli spaces of primitive odd-square classes
may contain obstructed stable objects, so one must control their
quadratic singularities before addressing normality. On the other hand, connectedness and irreducibility for the case of primitive character of Enriques surfaces, dealt with by \cite{Yos18}, depends on the geometry of Enriques surfaces. In particular, it based on \cite{GiesekerLi96} and \cite{OGrady96}, whose argument can not be directly duplicated in Gushel-Mukai threefold cases.

The numerical Grothendieck group $\mathrm{K}_{\mathrm{num}}(\operatorname{Ku}(X))$ has rank two, and in a suitable basis its Euler form is represented by $-I_2$ \cite{kuznetsov2018derived}. We write $v^2:=-\chi(v,v)$, so that the resulting quadratic form is positive definite. The parity of $v^2$ will enter the global connectedness argument, while lower bounds on $v^2$ control the codimension and rank estimates in the local normality argument.

\subsection{Main results}
Our principal result is the following.

\begin{theorem}\label{thm:intro-main}
Let $v$ be a fixed primitive numerical class with $v^2$ odd. Then, for a general smooth Gushel--Mukai threefold $X$, the moduli space $M_\sigma(\operatorname{Ku}(X),v)$ is irreducible.
\end{theorem}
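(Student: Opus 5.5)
Irreducibility will follow from two properties: $M:=M_\sigma(\operatorname{Ku}(X),v)$ is \emph{normal} and \emph{connected}. A connected normal variety is irreducible, because a normal noetherian scheme is a disjoint union of its irreducible components. The plan therefore has a local part (normality) and a global part (connectedness), and the parity hypothesis on $v^2$ enters only in the global part.

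\emph{Local part.} Since $v$ is primitive and $X$ is general, \cite{PPZ23} gives that for a generic choice $M$ consists only of stable objects. I will argue that $M$ is the good moduli space of a $\mathbb{G}_m$-gerbe over it, so its local structure is governed by deformation theory. At a stable $E$ we have $\Hom(E,E)=\CC$ and $\Ext^2(E,E)\cong\Hom(E,\tau E)^\vee$ by Serre duality $S_{\operatorname{Ku}(X)}=\tau[2]$. Hence $\Ext^2(E,E)=0$ unless $E\simeq\tau E$, in which case it is $\CC$. At points with $E\not\simeq\tau E$, $M$ is smooth of dimension $\ext^1(E,E)=v^2+1$. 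At points with $E\simeq\tau E$ there is a single obstruction. By formality of the dg-algebra $\RHom(E,E)$ (or the Kuranishi map computed via the Yoneda product $\Ext^1\times\Ext^1\to\Ext^2$), $M$ is locally analytically a quadric cone $\{q=0\}\subset\CC^{v^2+2}$. Here $q$ is the Yoneda pairing, whose rank I control using the CY2 cover. The $\tau$-fixed locus is the image of $\tau$-invariant stable objects, i.e.\ objects descending from the fixed locus of the residual involution on the moduli of the CY2 cover. I expect this locus to have small dimension, about $v^2/2$, which gives codimension at least $2$ in $M$ once $v^2\ge 3$. A hypersurface singularity is Cohen--Macaulay, so it satisfies $S_2$. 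If the quadric has rank $\ge 3$, it is also regular in codimension one. Serre's criterion then gives normality.

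\emph{Global part.} For connectedness I will deform. The relative moduli space $\mathcal M\to S$ over the family of GM threefolds is proper, by \cite{bayer2021stability,PPZ23}. It suffices to exhibit one special $X_0$ where $M$ is connected, together with flatness of the family. This gives connectedness of the general fibre by Zariski's connectedness/Stein factorization: a proper flat family whose fibres are normal and of constant dimension has geometrically connected fibres if one fibre is connected. Alternatively, I could use the $\widetilde{\mathrm{GL}}_2^+$- and autoequivalence actions on $\mathrm{K}_{\mathrm{num}}$ to reduce $v$ to a small list of classes. The Euler form is $-I_2$, so the classes are $v=(a,b)$ with $a^2+b^2$ odd. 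Rotations by the autoequivalences (e.g.\ those induced by the symmetries exchanging the basis vectors, and $[1]$) can be combined with wall-crossing/Fourier--Mukai-type equivalences between Kuznetsov components of GM threefolds (duals, period partners) to change $v$. The parity condition is exactly what lets the class be brought to $v^2=1$ through classes of the same parity. For the base case $v^2=1$, $M$ is known explicitly: it is the minimal model of the Fano surface of conics (resp.\ lines), which is connected. Induction on $v^2$ by a Mukai-type/Yoshioka-type reduction then propagates connectedness, or alternatively I would take a degeneration and count components through $\tau$-invariant loci.

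\emph{Main obstacle.} I expect connectedness for arbitrary odd $v^2$ to be the hardest step, since the Enriques-surface argument \cite{Yos18} relied on \cite{GiesekerLi96,OGrady96}. My first attempt would be to pass to the CY2 cover $\Ku(X)^{\mu_2}$. There, moduli of the pulled-back class are irreducible hyperkähler-type spaces, by analogy with K3 categories where irreducibility is known via deformation to a K3 surface. I would then show that $M$ is the quotient of a $\tau$-fixed component, or its image under the forgetful map. For primitive odd $v^2$, the lifted class has a single relevant orbit type, so the image is connected.
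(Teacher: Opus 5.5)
\textbf{Verdict: there is a genuine gap, in connectedness.} Your architecture matches the paper: normality of every fibre, irreducibility of one special fibre, then spreading this through the proper relative moduli space.

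\textbf{Local part.} This is essentially Theorem~\ref{thm:normality-main}, except that you leave ``rank $\ge 3$'' as a hypothesis. Lemma~\ref{lem:rank-estimate-fixed} proves it. Write $\operatorname{Inf}(S)\simeq F\oplus\iota F$, so $r=\operatorname{ext}^1_{\mathcal D}(F,\iota F)$ and $4r=2v^2-\delta^2$ with $\delta=[F]-\iota[F]$. Since $\delta$ is orthogonal to the positive two-plane $\operatorname{Inf}_*\mathrm{K}_{\mathrm{num}}(\mathcal C)_{\mathbb R}$ in a $(1,1)$-part of positive index $2$, one gets $\delta^2\le 0$ and hence $r\ge v^2/2$. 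Your expectation that the $\tau$-fixed locus has dimension about $v^2/2$ is equivalent to this same estimate.

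\textbf{Global part.} None of your routes produces even one connected fibre.
\begin{itemize}
\item Any exact equivalence between Kuznetsov components (autoequivalences, shifts, duals, period partners) preserves $\chi$. It therefore preserves $v^2$ exactly, so no class can be moved to $v^2=1$, and parity plays no role in such a reduction.
\item There are no walls to cross, since all Serre-invariant stability conditions lie in one $\widetilde{\mathrm{GL}}_2^+(\mathbb R)$-orbit; wall-crossing would not change $v$ anyway.
\item A Yoshioka-type induction rests on Gieseker--Li and O'Grady, which have no analogue here.
\item The CY2-cover route does not close. The map $E\mapsto\operatorname{Inf}(E)$ factors through $M/\tau$ and lands in the $\iota$-fixed locus of a moduli space on $\mathcal D$ for the $\iota$-invariant, non-generic $\sigma_{\mathcal D}$. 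That space contains strictly polystable points $F\oplus\iota F$ whenever $M$ has $\tau$-fixed points. Irreducibility of an ambient hyperk\"ahler-type space says nothing about how many components an involution-fixed locus has.
\end{itemize}

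\textbf{The missing idea.} A \emph{singular} special fibre is what forces connectedness, and this is where $v^2$ odd is used (Theorem~\ref{thm:singular-connectedness}).
\begin{itemize}
\item Choose $X_0$ via the K3 cover of a special GM threefold (Proposition~\ref{existence_singular}) so that $M_0$ contains a $\tau$-fixed stable $F$, hence a singular point $p=[F]$.
\item Suppose $M_0$ is disconnected. Take a component $Y\ni p$ and a stable $G$ outside $Y$; then $\tau G\notin Y$ as well.
\item $\mathbf R\pi_{Y*}\mathbf R\mathcal Hom(\pi_X^*G,\mathcal E)\simeq\mathcal W[-1]$ for a vector bundle $\mathcal W$ of rank $d-1$, where $d=v^2+1$. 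The analogous complex for $F$ degenerates exactly at $p$, with reduced degeneracy loci.
\item A Kaledin--Lehn--Sorger elementary modification on $\operatorname{Bl}_pY$ yields a rank-$(d-1)$ bundle $\mathcal W'$ with $c(\mathcal W')=\varphi^*c(\mathcal W)\,(1+D^2+D^4+\cdots)$.
\item Because $d$ is even, the degree-$d$ part gives $D^d=0$. This contradicts $\deg D^d=(-1)^{d-1}\operatorname{mult}_p Y\neq0$.
\end{itemize}

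\textbf{Spreading step.} You also assume flatness at the special point. The paper avoids this by restricting to a curve through that point and applying Lemma~\ref{an_ag_lemma}. It then uses generic flatness and local constancy of the number of geometric connected components.
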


Here and below, for a fixed class $v$, a general GM threefold means a point of a nonempty Zariski-open subset of the relevant moduli space of GM threefolds; this open subset is allowed to depend on $v$. Since all Serre-invariant stability conditions lie in the same $\widetilde{\mathrm{GL}}_2^+(\mathbb R)$-orbit, the conclusion is independent of the choice of $\sigma$.

Up to sign, there are two primitive numerical classes of square one. For a general GM threefold, their Bridgeland moduli spaces are identified in \cite{JLLZ} with the minimal model of the Fano surface of conics and with a Gieseker moduli space of rank-two sheaves, respectively. In particular, the square-one case of Theorem~\ref{thm:intro-main} is already known. Thus the new argument in this paper concerns primitive classes of odd square at least five. Its local input is the following normality theorem, which is also useful for nonprimitive classes.

\begin{theorem}\label{thm:intro-normality}
Let $X$ be a smooth Gushel--Mukai threefold, set $\mathcal C=\operatorname{Ku}(X)$, and let $\sigma$ be a Serre-invariant stability condition on $\mathcal C$. If $v_0\in \mathrm{K}_{\mathrm{num}}(\mathcal C)$ is primitive and $v_0^2\geq 5$, then $M_\sigma(\mathcal C,mv_0)$ is normal for every integer $m\geq 1$.
\end{theorem}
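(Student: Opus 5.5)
We will prove normality étale-locally, using the standard local structure of good moduli spaces of objects in a category with quadratic (formal) deformation theory. The key input is that $\mathcal C=\operatorname{Ku}(X)$ has Serre functor $\tau[2]$. So for a polystable object $E=\bigoplus_i E_i^{\oplus n_i}$ (with $E_i$ pairwise non-isomorphic $\sigma$-stable of classes $w_i$), we have $\Ext^2(E,E)\cong\Hom(E,\tau E)^\vee$ and $\Ext^{<0}=0$. Moreover, since the equivariant category is $2$-Calabi--Yau, the deformation theory is formal: the dg algebra $\RHom(E,E)$ is formal. This is the Enriques analogue of the Budur--Zhang / Arbarello--Saccà results for K3 categories, obtained by pulling back to the CY2 cover, where formality is known, and taking invariants.

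By Luna's étale slice theorem for good moduli spaces (Alper--Hall--Rydh), $M_\sigma(\mathcal C,mv_0)$ is then étale-locally at $[E]$ isomorphic to $\mu^{-1}(0)/\!\!/G$. Here $G=\operatorname{Aut}(E)=\prod GL(n_i)$ acts on $V=\Ext^1(E,E)$, and $\mu\colon V\to\Ext^2(E,E)$ is the quadratic Yoneda map. Normality of a GIT quotient follows from normality of $\mu^{-1}(0)$. It therefore suffices to show that $Z=\mu^{-1}(0)$ is normal, i.e.\ reduced complete intersection ($R_1+S_2$) for every polystable $E$ of class $mv_0$.

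The plan is as follows.

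First, describe $V$ and $\mu$ via an $\operatorname{Ext}$-quiver. The vertices are the $E_i$, each with $\tau E_i\cong E_j$ for some $j$ (possibly $i=j$). The arrow counts are $\ext^1(E_i,E_j)=\chi$-corrections of $-\chi(w_i,w_j)$, using the negative-definite form $-I_2$ together with $\hom(E_i,E_j)=\delta_{ij}$ and $\ext^2(E_i,E_j)=\hom(E_j,\tau E_i)$. The relations $\mu$ form a "twisted moment map", whose components are indexed by pairs $(i,j)$ with $E_j\cong\tau E_i$.

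Second, prove that $Z$ is a complete intersection of the expected dimension, $\dim V-\dim\Ext^2(E,E)$. Following Crawley-Boevey's flatness criterion, this reduces to an inequality of the form
\[
p(\alpha)\;\ge\;\sum_t p(\beta^{(t)})
\]
for every decomposition $\alpha=\sum\beta^{(t)}$ of the dimension vector, where $p$ is the relevant quadratic function. Crawley-Boevey's strict version of the inequality also gives codimension $\ge2$ of the locus of non-simple representations. Combined with generic smoothness of the stable locus (its $\mu$-differential is surjective wherever $\Hom(F,\tau F)=0$), this yields $R_1$; complete intersection gives $S_2$.

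The numerical input is where $v_0^2\ge5$ enters. The classes $w_i$ of stable factors of a semistable object of class $mv_0$ all have the same phase, hence are multiples of $v_0$, say $w_i=k_iv_0$. Thus $\ext^1(E_i,E_j)\ge k_ik_jv_0^2-(\text{at most }2)$, and for $v_0^2\ge5$ the quadratic form dominates the subtracted terms. This gives the required inequalities, as in the K3 case where $v_0^2\ge2$ suffices.

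Third, handle the obstructed stable points, i.e.\ stable $F$ with $F\cong\tau F$. There $\Ext^2(F,F)=\CC$ and the local model is a quadric cone $\{q=0\}\subset\CC^{v^2+1}$. Its rank is controlled by the pairing on $\Ext^1$ induced from $\tau$-equivariance. We must show this quadratic form has rank $\ge3$; then the cone is normal. The rank bound is obtained from the CY2 cover: $\Ext^1(F,F)$ is the $\tau$-invariant part of the symplectic space, on which the form is nondegenerate.

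The main obstacle will be the second step at strictly semistable points involving $\tau$-fixed or $\tau$-swapped stable factors. There the twisted moment map is not a standard preprojective relation, so Crawley-Boevey's results do not apply verbatim. I would first try to pass to the CY2 cover: identify $Z$ with a fixed locus/slice of the genuine moment map for the doubled quiver of the pulled-back objects. I would then verify the flatness inequalities by direct case analysis on $(k_i)$ and on which factors are $\tau$-invariant, using $v_0^2\ge5$ to absorb the correction terms.
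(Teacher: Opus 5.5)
Your architecture is the paper's: a formal local model $\mu^{-1}(0)/\!\!/G$, reduction to normality of the zero fibre, codimension of the non-simple locus, then analysis at simple points. However, your third step rests on a false claim, and that step is exactly where the hypothesis on $v_0^2$ is essential.

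Let $S\in\mathcal C$ (your $F$) be stable with $S\simeq\tau S$. Choose a linearization $\varphi$, and let $\widetilde S=(S,\varphi)$ and $\iota\widetilde S=(S,-\varphi)$ be its two lifts to the CY2 cover $\mathcal D$, with $\lambda=[\widetilde S]$. Adjunction splits
\[
\operatorname{Ext}^1_{\mathcal C}(S,S)\simeq\operatorname{Ext}^1_{\mathcal D}(\widetilde S,\widetilde S)\oplus\operatorname{Ext}^1_{\mathcal D}(\widetilde S,\iota\widetilde S)
\]
into the $\pm1$-eigenspaces of $\alpha\mapsto\varphi^{-1}\tau(\alpha)\varphi$. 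Meanwhile $\operatorname{Ext}^2_{\mathcal C}(S,S)\simeq\operatorname{Ext}^2_{\mathcal D}(\widetilde S,\widetilde S)\simeq\CC$ lies entirely in the $+1$-part, because $\operatorname{Ext}^2_{\mathcal D}(\widetilde S,\iota\widetilde S)=0$. Consequently the Kuranishi quadric $q_S$ behaves as follows:
\begin{enumerate}
\item it has no cross terms between the two summands;
\item it vanishes identically on the invariant summand, since there it is the Yoneda square in a CY2 category, killed by the skew trace pairing (these are the unobstructed deformations along the $\tau$-fixed locus);
\item it restricts to a nondegenerate form on the anti-invariant summand.
\end{enumerate}
So the invariant part $\operatorname{Ext}^1_{\mathcal D}(\widetilde S,\widetilde S)$, which is where the symplectic CY2 form lives, lies in the kernel of $q_S$. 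Hence
\[
\operatorname{rk}q_S=\operatorname{ext}^1_{\mathcal D}(\widetilde S,\iota\widetilde S)=v^2-\lambda^2 .
\]
This is strictly less than $\operatorname{ext}^1_{\mathcal C}(S,S)=v^2+2$ (not $v^2+1$) as soon as $\widetilde S$ is not spherical, which is the typical case. Note also that if $q_S$ were nondegenerate on all of $\operatorname{Ext}^1$, rank $\ge3$ would hold for every $v$, and your hypothesis would be idle at this step.

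The missing ingredient is a lattice estimate on the cover. Put $r=(\lambda,\iota\lambda)$ and $\delta=\lambda-\iota\lambda$; then $v^2=\lambda^2+r$ and $4r=2v^2-\delta^2$. Since $\delta$ is $\iota$-anti-invariant, it is orthogonal to the positive-definite plane $\operatorname{Inf}_*\mathrm K_{\mathrm{num}}(\mathcal C)_{\RR}$. That plane already exhausts the positive index $2$ of the algebraic Mukai lattice of $\mathcal D$, so $\delta^2\le0$. Therefore $r\ge v^2/2\ge5/2$, whence $r\ge3$.

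Your second step is left unexecuted, but the obstacle you anticipate is not real: no Crawley-Boevey theory or passage to the cover is needed. The paper inducts on $|\mathbf d|$. Representations preserving a subspace of dimension vector $\boldsymbol\beta$ are block upper triangular, and $\mu_{\mathbf d}$ preserves this shape with diagonal blocks $\mu_{\boldsymbol\beta}$ and $\mu_{\boldsymbol\gamma}$. Use $a_{ij}=v_0^2m_im_j+\delta_{ij}+\delta_{j,\nu(i)}$; note that the corrections to $-\chi$ are added, not subtracted as in your bound. A direct count then shows the non-simple locus has codimension, relative to $\operatorname{expdim}(\mathbf d)$, at least
\[
v_0^2\operatorname{wt}(\boldsymbol\beta)\operatorname{wt}(\boldsymbol\gamma)-\sum_i\beta_i\gamma_{\nu(i)}\ge(v_0^2-1)\operatorname{wt}(\boldsymbol\beta)\operatorname{wt}(\boldsymbol\gamma)\ge2 .
\]
Every component then meets the simple locus. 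There the germ is either smooth, or a rank-$\ge3$ quadric times the smooth orbit factor. This gives a complete intersection of expected dimension that is reduced and satisfies $R_1$ and $S_2$.
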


The importance of Theorem~\ref{thm:intro-normality} is that $M_\sigma(\mathcal C,mv_0)$ may contain strictly semistable objects and may also be singular along the stable locus. Its normality therefore cannot be deduced merely from the existence of a dense smooth locus.

\subsection{Strategy of the proof}

For the new cases $v^2\geq 5$, the proof has three steps. First, we prove Theorem~\ref{thm:intro-normality}. At a polystable object
$E\simeq \bigoplus_i E_i\otimes V_i$ of class $mv_0$, injectivity of the central charge gives $[E_i]=m_i v_0$ for every stable factor. Formality identifies the analytic germ at $[E]$ with a reductive quotient
$Z_{\mathbf d}\mathbin{/\!\!/}G_{\mathbf d}$, where $Z_{\mathbf d}$ is the zero fibre of a Serre-twisted quadratic Kuranishi map \cite{ChenPertusiZhao2024}. Induction on $|\mathbf d|$ shows that the non-simple locus has codimension at least two. At a simple point, an étale comparison with stable objects gives either a smooth local model or, at a $\tau$-fixed object, a quadratic hypersurface of rank at least three. Hence $Z_{\mathbf d}$ is a reduced local complete intersection satisfying $R_1$ and $S_2$, and its reductive quotient is normal.

Second, let $M_\sigma(\mathcal C,v)$ be singular and normal, with $v$ primitive and $v^2$ odd. Assuming that it is disconnected, let $Y$ be the component containing a singular point $p$. Adapting the blow-up argument of Kaledin--Lehn--Sorger \cite{kaledin06Sin}, elementary modifications of a relative Ext complex on $\operatorname{Bl}_p(Y)$ give a Chern-class identity for the exceptional divisor $D$. Since $d=\dim Y=v^2+1$ is even, its top-degree part gives $D^d=0$, whereas
$\deg(D^d)=(-1)^{d-1}\operatorname{mult}_p(Y)\neq 0$. This contradiction proves connectedness, and normality then implies irreducibility.

Finally, for the fixed class $v$, we choose a special GM threefold whose moduli space contains a $\tau$-fixed stable object and is therefore singular. The first two steps make this special fibre irreducible. We place it in a one-parameter family and use the proper relative Bridgeland moduli space provided by stability conditions in families \cite{bayer2021stability}. Normality and equidimensionality of the fibres then imply that irreducibility persists over a nonempty open subset of the base, proving Theorem~\ref{thm:intro-main}.

\subsection{Related work}
The geometry of moduli spaces on Enriques categories is modeled in part on the classical theory of sheaves on Enriques surfaces. Stable vector bundles and their relation to the K3 cover were studied by Kim \cite{Kim98,Kim06}. Semistable sheaves and reducibility phenomena were investigated by Hauzer \cite{Hau10}, while existence results for stable sheaves were obtained by Nuer and Yoshioka \cite{Nue16a,Yos17,Yos18}. Bridgeland moduli spaces on Enriques surfaces, including their projectivity, wall-crossing, and birational geometry, were studied in \cite{Nue16b,Bec20,NY20}.

For K3 surfaces, Mukai constructed the natural symplectic form on moduli spaces of stable sheaves \cite{Muk84}, and Yoshioka proved fundamental irreducibility results \cite{Yos99}. Bridgeland moduli spaces and their wall-crossing birational geometry were developed systematically by Bayer and Macrì \cite{BM14a,BM14b}. Singular moduli spaces associated with nonprimitive classes were studied through quiver and symplectic methods by Kaledin--Lehn--Sorger and Arbarello--Saccà \cite{kaledin06Sin,ArbarelloSacca2018}; related results for Kuznetsov components appear in \cite{Sac23}. These works provide both a model and a contrast for the present setting: stable moduli spaces in a two-Calabi--Yau category are smooth, whereas the fixed locus of the Serre involution can produce singular stable points in an Enriques category.

Stability conditions on Kuznetsov components were constructed in \cite{bayer2017stability}, and the Gushel--Mukai case was developed in \cite{PPZ22}. Serre-invariance for GM threefolds was proved in \cite{Per21}. Perry--Pertusi--Zhao established a general theory of moduli spaces in Enriques categories and related them to fixed loci in moduli spaces on their Calabi--Yau covers \cite{PPZ23}. Further results on moduli spaces in Kuznetsov components of Fano threefolds and GM varieties can be found in \cite{APR22,JLLZ,FGLZ25,FGLZcubic}. The local normality argument of this paper also relies on the formality results of \cite{ChenPertusiZhao2024} and is influenced by the quiver-theoretic methods of \cite{kaledin06Sin,ArbarelloSacca2018}.

\subsection{Organization of the paper}
In Section~2, we recall semiorthogonal decompositions, Kuznetsov components of GM threefolds, their numerical Grothendieck lattices, and the Enriques structure together with its Calabi--Yau cover. In Section~3, we review Serre-invariant stability conditions, the induced stability condition on the cover, and the construction of absolute and relative moduli spaces.

Section~4 is devoted to normality. We construct the Serre-twisted quadratic local model, compare its simple locus with stable objects, establish the rank estimate at Serre-fixed stable points, and prove Theorem~\ref{thm:intro-normality}. In Section~5, we adapt the blow-up argument of Kaledin--Lehn--Sorger to prove the connectedness criterion for singular normal moduli spaces of primitive odd-square classes.

In Section~6, we construct a singular irreducible fibre using a special GM threefold and its Calabi--Yau cover, and then deform this fibre in a family to prove Theorem~\ref{thm:intro-main}.

\subsection{Acknowledgements}
This paper grew out of my undergraduate thesis. I am deeply grateful to my advisor at that time, Andrei C\u{a}ld\u{a}raru, for his generous guidance. Most of this paper was finished during my graduate student stage. I sincerely thank my advisor, Alexander Perry, for his helpful comments and advice. The author also thank Ritwick Bhargava, Zhiyu Liu, and Saket Shah for useful discussions. Finally, I am grateful to my parents for their constant support.

\subsection*{Notation} \leavevmode\\
    \indent $\D^b(X)$: the bounded derived category of coherent sheaves on $X$\\
    \indent $\Ku(X)$: the Kuznetsov component of $\D^b(X)$\\
    \indent $\ch(E)$: the Chern character of an object $E\in \D^b(X)$\\
     \indent $c_i(E)$: the $i$-th Chern class of an object $E \in \D^b(X)$\\
    \indent $\cH^i(E)$: the $i$-th cohomology object of a complex $E$ with respect to the heart $\Coh(X)$.  \\
    \indent $H^i(E)$: the $i$-th cohomology of a sheaf $E$\\
\subsection*{Conventions} \leavevmode
\begin{itemize}
    \item Let $\sigma$ be a weak stability condition. Then the central charge and heart are denoted by $Z_{\sigma}$ and $\cA_{\sigma}$,  respectively.

    \item We use $\hom$ and $\ext^{i}$ to represent the dimension of the vector spaces $\Hom$ and~$\Ext^{i}$.

\item For a lattice $\Lambda$, we set $\Lambda_{\RR}\coloneqq \Lambda\otimes_{\ZZ} \RR$.
    
    \item We denote the numerical class in the numerical Grothendieck group by $[E]$ for any object $E$. In our setting, giving a numerical class is equivalent to giving a Chern character.
\end{itemize}

\section{Kuznetsov components} \label{Kuz_component}

In this section, we collect some definitions and properties of Kuznetsov components.

\subsection{Semiorthogonal decompositions} 

We begin with a series of general definitions.

\begin{definition}
Let $\cT$ be a triangulated category and $E \in \cT$ be an object. We say that $E$ is an \emph{exceptional object} if $\RHom(E, E) = k$. Now let $(E_1, \dots, E_m)$ be a collection of exceptional objects in $\cT$. We say it is an \emph{exceptional collection} if $\RHom(E_i, E_j) = 0$ for any $i > j$.
\end{definition}

\begin{definition}
Let $\cT$ be a triangulated category and $\cA$ a triangulated subcategory. We define the \emph{right orthogonal complement} of $\cA$ in $\cT$ as the full triangulated subcategory
\[ \cA^\bot := \{ X \in \cT \mid \RHom(Y, X) =0  \text{ for all } Y \in \cA \}.  \]
The \emph{left orthogonal complement} is defined similarly, as 
\[ {}^\bot \cA := \{ X \in \cT \mid \RHom(X, Y) =0  \text{ for all } Y \in \cA \}.  \]
\end{definition}

\begin{definition}
Let $\mathcal{B}$ be a triangulated category, then a triangulated subcategory $\mathcal A\subset \mathcal B$ is \textit{admissible} if the inclusion functor $i\colon \mathcal{A} \hookrightarrow \mathcal{B}$ has left adjoint $i^*$ and right adjoint $i^!$.
\end{definition}

\begin{definition}
A sequence of admissible subcategories $( \cA_1, \dots, \cA_m  )$ in $\cT$ is \emph{semiorthogonal} if $\cA_j \sst \cA_i^\bot $ for all $i > j$.
The triangulated subcategory generated by $\cA_1,\cdots,\cA_m$ is denoted by $ \langle \cA_1, \dots, \cA_m \rangle$.

A semiorthogonal collection $( \cA_1, \dots, \cA_m  )$ of $\cT$  is a \emph{semiorthogonal decomposition} if $ \cT = \langle \cA_1, \dots, \cA_m \rangle$.
\end{definition}

\begin{definition} Let $\cA \subset \cT$ be an admissible triangulated subcategory, then the \textit{left mutation functor} through $\cA$ is the functor $\mathbb{L}_\mathcal{A}$ defined by the canonical functorial exact triangle 
\[ii^! \rightarrow id \rightarrow \mathbb{L}_{\cA}.\]
\end{definition}

In particular, when $E\in \cT$ is an exceptional object and $F\in \cT$ is any object, $\mathbb{L}_E F$ is defined by the following exact triangle
\begin{equation} \label{mutation}
    E\otimes \RHom (E,F) \rightarrow F \rightarrow \mathbb{L}_E F.
\end{equation}
Therefore, it is clear that $\RHom(F,E) = 0$ is equivalent to  $\mathbb{L}_E F = F$.

\subsection{Kuznetsov components of Gushel--Mukai threefolds}
Let $X$ be a Gushel--Mukai (GM) threefold, which is a prime Fano threefold of degree $10$. It is either ordinary, in which case it is a quadric section of a linear section of $\operatorname{Gr}(2,5)$, or special, in
which case it is a double cover of a linear section of $\operatorname{Gr}(2,5)$. There exists a unique stable rank-two vector bundle $\mathcal E$, called the Mukai bundle,
with
\[\operatorname{ch}(\mathcal E)=2-H+L+\frac13 P,\]
which is the pullback of the tautological bundle on $\Gr(2,5)$. Here, $H$, $L$, and $P$ are the classes of hyperlane sections, lines, and points of $X$, respectively.

By \cite{kuznetsov2009derived}, we have the following semiorthogonal decomposition 
\[\D^b(X) = \langle \Ku(X), \oh_X , \cE^\vee \rangle.\]
We denote the left adjoint of $\Ku(X)\hookrightarrow \D^b(X)$ by $\pr$, which is called the \emph{projection functor}. It can be written as a composition of left mutations  $\pr=\mathbb{L}_{\oh_X}\mathbb{L}_{\cE^{\vee}}$.

Let $\mathrm{K}_0(\Ku(X))$ be the Grothendieck group of $\Ku(X)$. The \textit{numerical Grothendieck group} is the quotient group $\mathrm{K}_{\mathrm{num}}(\Ku(X)) = \mathrm{K}_0(\Ku(X))/\ker(\chi)$, where $\chi$ is the Euler form. 

For Kuznetsov components of Gushel--Mukai threefolds, we have the following computational result.

\begin{lemma} [{\cite{kuznetsov2009derived,kuznetsov2018derived}}] \label{NumKGroup}
The numerical Grothendieck group $\mathrm{K}_{\mathrm{num}}(\Ku(X))$ is a rank 2 lattice with basis vectors
\[\v=1-2L, \quad \w=2-H+\frac{5}{6}P.\]

The Euler form with respect to the basis is 
\[\left(\begin{array}{cc}-1 & 0 \\0 & -1 \\\end{array}\right).\]
\end{lemma}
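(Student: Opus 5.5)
The plan is to compute everything through Chern characters and Hirzebruch--Riemann--Roch on $X$, using only the numerical invariants of $X$: $H^3=10$, $H\cdot L=1$, $\mathrm{td}_X=1+\tfrac12 H+\tfrac{1}{12}(c_1^2+c_2)+P$ (the last since $\chi(\oh_X)=1$), and $c_2(X)=4L\cdot 6$ in the sense that $H\cdot c_2(X)=24$ (from $\chi(\oh_X)=\tfrac1{24}c_1c_2=1$ with $c_1=H$... more precisely $c_1c_2=24$). The first step is to describe $\KK_{\mathrm{num}}(\Ku(X))$ as a subgroup of $\KK_{\mathrm{num}}(X)$. Since $\D^b(X)=\langle\Ku(X),\oh_X,\cE^\vee\rangle$, the class of an object of $\Ku(X)$ is characterized by the orthogonality conditions $\chi(\oh_X,E)=0$ and $\chi(\cE^\vee,E)=0$. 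Moreover $\KK_{\mathrm{num}}(X)$ is the lattice of Chern characters of the form $a+bH+cL+dP$ with the appropriate integrality. Because $X$ has Picard rank one and $H^4$ is generated by $L$, this is free of rank $4$. The two linear conditions therefore cut out a rank $2$ sublattice, and the projection $\pr$ shows that $\KK_{\mathrm{num}}(\Ku(X))$ is exactly this orthogonal.

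The second step is to write both conditions explicitly via HRR for $\ch(E)=r+xH+yL+zP$:
\[
\chi(\oh_X,E)=\int \ch(E)\,\mathrm{td}_X, \qquad \chi(\cE^\vee,E)=\int \ch(\cE)\ch(E)\,\mathrm{td}_X,
\]
using $\ch(\cE)=2-H+L+\tfrac13P$. I will then verify directly that $\v=1-2L$ and $\w=2-H+\tfrac56P$ satisfy both equations. For instance, $\chi(\oh_X,\v)=1-2\cdot\tfrac12(H\cdot L)=0$, and $\w$ is handled similarly. Next I compute the Euler pairings $\chi(\v,\v)$, $\chi(\v,\w)$, $\chi(\w,\v)$, $\chi(\w,\w)$ by $\chi(E,F)=\int\ch(E)^\vee\ch(F)\,\mathrm{td}_X$, where $\ch^\vee$ flips the signs of the odd-degree parts. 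The expected outcome is $-1,0,0,-1$.

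The third step is to show that $\v,\w$ generate the sublattice rather than a finite-index subgroup. This is immediate from the Gram determinant: the Euler form restricted to $\langle\v,\w\rangle$ has determinant $1$, so any lattice $\Lambda\supset\langle\v,\w\rangle$ on which $\chi$ is integral would have discriminant $1/[\Lambda:\langle \v,\w\rangle]^2$, which is not an integer unless the index is $1$. The pairing on $\KK_{\mathrm{num}}(\Ku(X))$ is integer-valued, and it is nondegenerate by definition of numerical K-theory, so the index is $1$.

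The main obstacle is bookkeeping, namely getting the Todd class and integrality of $\KK_{\mathrm{num}}(X)$ right. In particular I need the precise statement that $\ch$ identifies $\KK_{\mathrm{num}}(X)$ with a lattice containing $P$-coefficients in $\tfrac16\ZZ$ (hence $\tfrac56P$). It also has to be checked that both $\v$ and $\w$ are actually realized by objects, for example $\v$ by the projection of an ideal sheaf of a line and $\w$ by the projection of a suitable sheaf. If this becomes delicate, I would simply cite \cite{kuznetsov2009derived,kuznetsov2018derived} for the rank and realization and keep only the HRR verification of the Gram matrix.
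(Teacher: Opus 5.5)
Your proposal is correct. The paper gives no argument for this lemma. It quotes \cite{kuznetsov2009derived,kuznetsov2018derived} and only records the Riemann--Roch formula $\chi(a+bH+cL+dP)=a+\frac{17}{6}b+\frac12c+d$. That formula is exactly what your Todd class gives once $c_2(X)=24L$. Your sentence about $c_2$ is garbled; the point is that $H\cdot c_2(X)=24$ and $H^4(X,\ZZ)=\ZZ L$ with $H\cdot L=1$, so $\mathrm{td}_X=1+\frac12H+\frac{17}{6}L+P$.

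Your HRR checks come out as you expect: $\chi(\oh_X,\v)=\chi(\oh_X,\w)=\chi(\cE^\vee,\v)=\chi(\cE^\vee,\w)=0$, $\chi(\v,\v)=\chi(\w,\w)=-1$ and $\chi(\v,\w)=\chi(\w,\v)=0$. So your route is a self-contained version of the standard computation behind the cited results.

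The determinant argument in your third step is an efficient way to get saturation. It removes the ``main obstacle'' you name, because you never need the exact integral structure of $\ch(\KK_0(X))$. You only need three facts: $\KK_{\mathrm{num}}(\Ku(X))$ has rank $2$ (a statement over $\mathbb Q$), $\chi$ is integral on it, and $\v,\w$ are integral classes.

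For identifying $\KK_{\mathrm{num}}(\Ku(X))$ with the orthogonal, the cleanest justification is the splitting $\KK_0(X)=\KK_0(\Ku(X))\oplus\ZZ[\oh_X]\oplus\ZZ[\cE^\vee]$. Since $\chi(\cE^\vee,\oh_X)=0$, a class $k+a[\oh_X]+b[\cE^\vee]$ orthogonal to $\oh_X$ and $\cE^\vee$ has $b=0$, and then $a=0$. Injectivity of $\KK_{\mathrm{num}}(\Ku(X))\to\KK_{\mathrm{num}}(X)$ holds because classes in $\Ku(X)$ already pair to zero with $\oh_X$ and $\cE^\vee$.

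One correction to your last paragraph: the projection of the ideal sheaf of a line does not realize $\v$. From $\chi(\cE^\vee,\mathcal I_\ell)=-1$ and $\chi(\oh_X,[\mathcal I_\ell]+[\cE^\vee])=5$ you get $[\pr(\mathcal I_\ell)]=-2+H-\frac56P=-\w$. The class $\v=1-2L$ is instead the class of $\mathcal I_C$ for a conic $C\subset X$.

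In any case no objects are needed. Using $\ch(\oh_\ell)=L+\frac12P$ and $\ch(\oh_p)=P$, one has $\v=[\oh_X]-2[\oh_\ell]+[\oh_p]$ and $\w=[\cE]-[\oh_\ell]+[\oh_p]$ in $\KK_0(X)$. By the splitting argument above, these lie in $\KK_0(\Ku(X))$ because they satisfy both orthogonality conditions.
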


A numerical class $\mathbf{c}\in \mathrm{K}_{\mathrm{num}}(\Ku(X))$ is a \emph{$(-r)$-class} if $\chi(\mathbf{c}, \mathbf{c})=-r$. It is clear that the only $(-1)$-classes in $\mathrm{K}_{\mathrm{num}}(\Ku(X))$ are $\v$ and $\w$ up to sign.

One can easily verify that $$\chi(a+bH+cL+dP) = a + \frac{17}{6}b + \frac{1}{2}c + d.$$

\subsection{The Enriques structure and the Calabi--Yau cover}
\label{subsec:enriques-cover}

The Kuznetsov component $\Ku(X)$ of a GM threefold $X$ is an Enriques category. More precisely, the following result combines \cite[Propositions~2.6--2.7]{kuznetsov2018derived} with \cite[Lemma~7.3]{PPZ23}.

\begin{proposition}
\label{prop:enriques-structure}
There is a nontrivial involutive autoequivalence $\tau\colon\Ku(X)\to\Ku(X)$, underlying a $\mathbb Z/2$-action, such that
$S_{\Ku(X)}\simeq\tau[2]$. Moreover, $\tau$ acts trivially on $\mathrm{K}_{\mathrm{num}}(\Ku(X))$.
\end{proposition}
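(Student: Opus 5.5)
The plan is to build $\tau$ explicitly from the geometry of $X$, read off the Serre functor from a standard formula for Kuznetsov components of Fano threefolds, and then check the action on $\mathrm{K}_{\mathrm{num}}$ by a lattice argument.

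\emph{Step 1: construction of $\tau$.} Let $f\colon X\to M$ be the Gushel map to a linear section $M$ of $\Gr(2,5)$, or its degeneration in the special case. When $X$ is special, $f$ is a double cover with covering involution $\iota$, and I would set $\tau=\iota^*|_{\Ku(X)}$. This preserves $\Ku(X)$ because $\iota^*\oh_X\simeq\oh_X$ and $\iota^*\cE\simeq\cE$, the Mukai bundle being pulled back from $\Gr(2,5)$.

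When $X$ is ordinary, I would follow \cite{kuznetsov2018derived} and use the Kuznetsov--Perry duality between $X$ and the associated ``opposite'' GM variety, or equivalently the quadric fibration description. In that description $\Ku(X)$ appears as the $\mathbb Z/2$-invariants of a residual category, and $\tau$ is the induced involution. Either way, $\tau$ is an autoequivalence with $\tau^2\simeq\mathrm{id}$ that underlies a genuine $\mathbb Z/2$-action, i.e.\ it comes with coherent higher data. This is the point where I would lean on \cite[Lemma~7.3]{PPZ23} rather than reprove it.

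\emph{Step 2: the Serre functor.} Since $\omega_X=\oh_X(-H)$, we have $S_{\D^b(X)}=-\otimes\oh_X(-H)[3]$. The Serre functor of the Kuznetsov component is
\[
S_{\Ku(X)}^{-1}\simeq \pr\circ(-\otimes\oh_X(H))[-3].
\]
One then computes the composite $\pr\circ(-\otimes\oh_X(H))$ on $\Ku(X)$ via the mutations $\mathbb L_{\oh_X}\mathbb L_{\cE^\vee}$. The aim is to identify it with $\tau[-1]$, which gives $S_{\Ku(X)}\simeq\tau[2]$.

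This identification is the main obstacle. I would first try the route of \cite[Prop.~2.6]{kuznetsov2018derived}: realise $X$ as a quadric section inside a cone over $M$, or as a double cover, and use Kuznetsov's spinor/Clifford comparison. There the twist by $\oh_X(H)$ composed with the mutations becomes the involution of the double cover up to the shift $[1]$.

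\emph{Step 3: nontriviality of $\tau$.} If $\tau\simeq\mathrm{id}$, then $\Ku(X)$ would be $2$-Calabi--Yau. Then $\chi(E,E)=2\hom(E,E)-\ext^1(E,E)$ would make the Euler form symmetric with the sign pattern of a K3 category. The form on $\mathrm{K}_{\mathrm{num}}$ would then contain classes of square $+2$ coming from points, whereas Lemma~\ref{NumKGroup} says it is $-I_2$. A cleaner argument uses the Hochschild homology $HH_\ast$: $HH_{-2}$ would be nonzero for a $2$-CY category, while it vanishes here.

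\emph{Step 4: trivial action on $\mathrm{K}_{\mathrm{num}}$.} The map $[\tau]$ is an isometry of $(\mathrm{K}_{\mathrm{num}},-I_2)$, so it is a signed permutation of $\{\v,\w\}$. The Serre functor satisfies $\chi(A,B)=\chi(B,S A)$, and $[S]=[\tau][2]=[\tau]$. Since the form $-I_2$ is symmetric, this gives $\chi(A,B)=\chi(B,[\tau]A)=\chi([\tau]A,B)$, hence $[\tau]=\mathrm{id}$.
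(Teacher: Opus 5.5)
For the existence of $\tau$, its $\mathbb Z/2$-structure, and $S_{\Ku(X)}\simeq\tau[2]$, your plan is the paper's: both rest on \cite[Prop.~2.6]{kuznetsov2018derived} and \cite[Lemma~7.3]{PPZ23}. However, the target you state in Step~2 is off by a shift. You have $S_{\Ku(X)}^{-1}\simeq\pr\circ(-\otimes\oh_X(H))[-3]$, and $S_{\Ku(X)}\simeq\tau[2]$ means $S_{\Ku(X)}^{-1}\simeq\tau[-2]$. So what must be shown is $\pr\circ(-\otimes\oh_X(H))|_{\Ku(X)}\simeq\tau[1]$, i.e.\ $\tau\simeq\pr\circ(-\otimes\oh_X(H))[-1]$. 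An identification with $\tau[-1]$ would instead give $S_{\Ku(X)}\simeq\tau[4]$. Your later phrase ``up to the shift $[1]$'' is the correct version.

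In Step~3, discard the first argument. Skyscraper sheaves do not lie in $\Ku(X)$, and nothing about a CY2 structure forces classes of square $\pm2$. Nor does symmetry of $\chi$ give a contradiction, since $-I_2$ is already symmetric. A correct lattice-theoretic version would run as follows: a genuine CY2 structure makes the Serre pairing on $\Ext^1(E,E)$ skew, so $\chi(x,x)$ is always even, which contradicts $\chi(\v,\v)=-1$. Your Hochschild argument is sound. If $\tau\simeq\mathrm{id}$, then $HH_2(\Ku(X))\simeq\Hom(\mathrm{id},S_{\Ku(X)}[-2])=\Hom(\mathrm{id},\tau)$ would contain $\mathrm{id}$. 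But by additivity $HH_\bullet(\Ku(X))$ is a direct summand of $HH_\bullet(X)$, and $HH_{\pm2}(X)=0$ by HKR, because $h^{p,q}(X)=0$ whenever $|p-q|=2$.

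Step~4 is correct and takes a genuinely different route from the paper. The paper imports the trivial action on $\mathrm{K}_{\mathrm{num}}$ from the cited references, and in Section~4 from \cite[Lemma~4.3(i)]{FGLZ25}. You use only three inputs: Serre duality $\chi(A,B)=\chi(B,S_{\Ku(X)}A)$, the equality $[S_{\Ku(X)}]=[\tau]$ on $\mathrm{K}_{\mathrm{num}}$, and the symmetry and nondegeneracy of $\chi$ from Lemma~\ref{NumKGroup}. This is self-contained and shows slightly more: given $S_{\Ku(X)}\simeq\tau[2]$, triviality of $[\tau]$ on $\mathrm{K}_{\mathrm{num}}$ is equivalent to symmetry of the numerical Euler form. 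The cited route, by contrast, does not depend on the explicit computation of the lattice. Your remark that $[\tau]$ is a signed permutation of $\{\v,\w\}$ is never used and can be dropped.
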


Let
$\Ku(X)^{\mathbb Z/2}$
be the corresponding equivariant category. By \cite[Lemma~3.4 and Example~3.10]{PPZ23}, the category $\Ku(X)^{\mathbb Z/2}$ has the Serre functor $S_{\Ku(X)^{\mathbb Z/2}}\simeq [2]$, called the Calabi--Yau cover of $\Ku(X)$, and it carries a residual $\mathbb Z/2$-action whose generator will be denoted by $\iota$. Taking invariants for the residual action recovers $\Ku(X)$. Furthermore, $\Ku(X)^{\mathbb Z/2}$ is equivalent to $\operatorname{Ku}(X^{\mathrm{op}})$ for an opposite GM variety $X^{\mathrm{op}}$ which is a K3 surface of degree $10$ when $X$ is special, and is a GM fourfold when $X$ is ordinary.

We denote by
$\operatorname{Forg}\colon\Ku(X)^{\mathbb Z/2}\to\Ku(X)$
the forgetful functor and by
$\operatorname{Inf}\colon\Ku(X)\to\Ku(X)^{\mathbb Z/2}$
the inflation functor. These functors are mutually left and right adjoint. For the group $\mathbb Z/2$, the reconstruction formalism of \cite[Section~2.3]{PPZ23} gives
\begin{equation}
\operatorname{Forg}\circ\operatorname{Inf}
\simeq
\operatorname{id}_{\Ku(X)}\oplus\tau,
\qquad
\operatorname{Inf}\circ\operatorname{Forg}
\simeq
\operatorname{id}_{\Ku(X)^{\mathbb Z/2}}\oplus\iota.
\label{eq:inf-forg}
\end{equation}
The residual involution is obtained by changing the sign of the equivariant structure. In particular, $\iota\circ\operatorname{Inf}\simeq\operatorname{Inf}$.


\begin{lemma}
\label{lem:inflation-pairing}
For any $a,b\in \mathrm{K}_{\mathrm{num}}(\Ku(X))$, one has
\begin{equation}
\chi\bigl(\operatorname{Inf}_*a,\operatorname{Inf}_*b\bigr)
=
2\chi(a,b).
\label{eq:inflation-pairing}
\end{equation}
Consequently, $\operatorname{Inf}_*$ is injective and
$\Pi_{\Ku(X)}:=\operatorname{Inf}_*
\bigl(\mathrm{K}_{\mathrm{num}}(\Ku(X))_{\mathbb R}\bigr)$
is a positive-definite two-plane in $\mathrm{K}_{\mathrm{num}}(\Ku(X)^{\mathbb Z/2})_{\mathbb R}$. The residual involution $\iota$ fixes $\Pi_{\Ku(X)}$ pointwise.
\end{lemma}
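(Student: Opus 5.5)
We compute the pairing through adjunction and the decomposition \eqref{eq:inf-forg}, and then read off the remaining assertions from Lemma~\ref{NumKGroup} and Proposition~\ref{prop:enriques-structure}.

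\textbf{Step 1: the pairing.} Let $A,B\in\Ku(X)$. Since $\operatorname{Inf}$ is right adjoint to $\operatorname{Forg}$, we get
\[
\RHom(\operatorname{Inf}A,\operatorname{Inf}B)\simeq\RHom(\operatorname{Forg}\operatorname{Inf}A,B)\simeq\RHom(A,B)\oplus\RHom(\tau A,B).
\]
Taking Euler characteristics gives $\chi(\operatorname{Inf}A,\operatorname{Inf}B)=\chi(A,B)+\chi(\tau A,B)$. By Proposition~\ref{prop:enriques-structure}, $\tau$ acts trivially on $\mathrm{K}_{\mathrm{num}}(\Ku(X))$, so $\chi(\tau A,B)=\chi(A,B)$. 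This is legitimate because $\chi(-,B)$ factors through numerical classes, by definition of $\mathrm{K}_{\mathrm{num}}$. Hence the pairing equals $2\chi(A,B)$.

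The same identity shows that $\operatorname{Inf}_*$ is well defined on numerical classes. If $A$ is numerically trivial, then for every $B$ we have $\chi(\operatorname{Inf}A,\operatorname{Inf}B)=0$. For an arbitrary $F\in\Ku(X)^{\mathbb Z/2}$, adjunction gives $\chi(\operatorname{Inf}A,F)=\chi(A,\operatorname{Forg}F)=0$. So $\operatorname{Inf}A$ is numerically trivial, and $\operatorname{Inf}_*$ descends to $\mathrm{K}_{\mathrm{num}}$. Bilinearity then extends \eqref{eq:inflation-pairing} to all $a,b$.

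\textbf{Step 2: injectivity and positive definiteness.} Let $\v,\w$ be the basis of Lemma~\ref{NumKGroup}, on which the Euler form is $-I_2$. By Step 1, the classes $\operatorname{Inf}_*\v$ and $\operatorname{Inf}_*\w$ have Gram matrix $-2I_2$, which is nondegenerate. Two consequences follow:
\begin{itemize}
\item these two classes are linearly independent over $\mathbb R$, so $\operatorname{Inf}_*$ is injective, even after tensoring with $\mathbb R$;
\item the image $\Pi_{\Ku(X)}$ is a two-plane.
\end{itemize}
On this plane the Euler form restricts to $-2I_2$, so it is negative definite. The quadratic form used in the paper is $v^2:=-\chi(v,v)$, and with respect to this sign convention $\Pi_{\Ku(X)}$ is positive definite.

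\textbf{Step 3: $\iota$ fixes $\Pi_{\Ku(X)}$.} By \eqref{eq:inf-forg} and the remark following it, $\iota\circ\operatorname{Inf}\simeq\operatorname{Inf}$. Therefore $\iota_*\operatorname{Inf}_*a=\operatorname{Inf}_*a$ for every class $a$. By linearity, $\iota_*$ fixes $\Pi_{\Ku(X)}$ pointwise.

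The only delicate point is the bookkeeping in Step 1: checking that $\chi$ on $\Ku(X)$ factors through numerical classes so that the triviality of $\tau_*$ can be used, and that $\operatorname{Inf}_*$ descends to numerical classes. Once that is in place, the remaining assertions are formal.
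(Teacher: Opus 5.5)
Your proposal is correct and follows essentially the paper's route: adjunction together with $\operatorname{Forg}\circ\operatorname{Inf}\simeq\operatorname{id}\oplus\tau$ and the triviality of $\tau$ on numerical classes gives the factor $2$. Nondegeneracy of the Euler form then gives injectivity and the positive two-plane, and $\iota\circ\operatorname{Inf}\simeq\operatorname{Inf}$ gives pointwise fixing. The differences are cosmetic: you place $\operatorname{Forg}$ in the first slot rather than the second, and you explicitly check that $\operatorname{Inf}_*$ descends to $\mathrm{K}_{\mathrm{num}}$ and spell out the sign convention $v^2=-\chi(v,v)$, both of which the paper leaves implicit.
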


\begin{proof}
By adjunction and \eqref{eq:inf-forg}, we have
\[
\chi\bigl(\operatorname{Inf}_*a,\operatorname{Inf}_*b\bigr)=
\chi\bigl(a,b+\tau_*b\bigr)
=2\chi(a,b),
\]
where the last equality follows from Proposition~\ref{prop:enriques-structure}. Positive definiteness of the Euler form of $\Ku(X)$ then implies that $\operatorname{Inf}_*$ is injective and that its image is a positive two-plane. Finally, $\iota\circ\operatorname{Inf}\simeq\operatorname{Inf}$, so $\iota$ fixes this image.
\end{proof}


\section{Stability conditions}
\label{sec:stability}

In this section, we fix $X$ to be a GM threefold and review the construction and properties of stability conditions on $\Ku(X)$.

\subsection{Stability conditions and moduli spaces}
\label{subsec:numerical-stability}

\begin{definition}
A \emph{slicing} $\cP$ of a triangulated category $\cT$ consists of full additive subcategories $\cP(\phi) \subset \cT$ for each $\phi \in \mathbb{R}$ satisfying
\begin{enumerate}[resume]
\item for $\phi\in (0,1]$, the subcategory $\cP(\phi)$ is given by the zero object and all $\sigma$-semistable objects whose phase is $\phi$;
\item for $\phi + n$ with $\phi\in (0,1]$ and $n\in \mathbb{Z}$, we set $\cP(\phi + n) := \cP(\phi)[n]$.
\end{enumerate}
\end{definition}

The nonzero objects of $\cP(\phi)$ are called \emph{semistable} of \emph{phase} $\phi$; the simple objects in $\cP(\phi)$ are called \emph{stable}. 
For a nonzero object $F\in\cD$, the sequence of morphisms appearing in the above definition is unique and called the \emph{Harder--Narasimhan (HN) filtration} of $F$, and the objects $A_i$ are called the \emph{Harder--Narasimhan factors} of $F$.
       
Let $\mathcal T$ be a triangulated category and fix a surjective homomorphism $v\colon \KK_0(\cT)\to \Lambda$ to a finite-rank lattice. 

\begin{definition}
\label{definition-stability-condition}
Consider a pair $\sigma=(Z,\cP)$, where $Z\colon\Lambda\rightarrow\CC$ is a homomorphism and $\cP$ is a slicing of $\cT$. 
We say that $\sigma$ is a \emph{stability condition} on $\cT$ with respect to $(\Lambda,v)$ if
\begin{enumerate}
    \item for all phases $\phi \in \RR$ and objects $0 \neq F \in \cP(\phi)$, we have $Z(v(F)) \in\RR_{>0}\cdot e^{i\pi\phi}$, and
    \item there exists a quadratic form $Q$ on the vector space $\Lambda_\RR\coloneqq\Lambda\otimes\RR$ such that
\begin{itemize}
\item $Q$ is negative definite on the kernel of the $\RR$-linear extension $Z_{\RR}\colon\Lambda_\RR\to\CC$, and
\item for any $\phi\in \RR$ and any $F \in \cP(\phi)$, we have $Q(v(F)) \geqslant 0$. 
\end{itemize}
\end{enumerate}
\end{definition}

An object $F \in \cP(\phi)$ is called a \emph{$\sigma$-semistable object of phase $\phi$}.

In our paper, we will always assume that $\cT$ is a semiorthogonal component of a smooth projective variety, and take $v$ to be the natural quotient map $\KK_0(\cT)\twoheadrightarrow \KK_{\mathrm{num}}(\cT)$.

\subsection{Serre-invariant stability conditions}
\label{subsec:serre-invariant-stability}

The universal covering $\widetilde{\mathrm{GL}}_2^+(\mathbb R)$ of $\mathrm{GL}_2^+(\mathbb R)$ acts on the right
on the set of stability conditions, while exact autoequivalences of
$\mathcal T$ act on the left (cf.~\cite{bridgeland}).

\begin{definition}
\label{def:invariant-stability}
Let $\Phi$ be an autoequivalence of $\mathcal T$. A stability
condition $\sigma$ is called $\Phi$-invariant if
$\Phi\cdot\sigma=\sigma\cdot\widetilde g$ for some
$\widetilde g\in\widetilde{\mathrm{GL}}_2^+(\mathbb R)$. If
$\mathcal T$ has a Serre functor $S_{\mathcal T}$, we say that $\sigma$ is \emph{Serre-invariant} when it is $S_{\mathcal T}$-invariant.
\end{definition}

By \cite{bayer2017stability,JLLZ,FeyzbakhshPertusi2021stab,Per21}, there exists a Serre-invariant stability condition $\sigma$ on $\Ku(X)$ for any GM threefold $X$ and all Serre-invariant stability conditions on $\Ku(X)$ belong to the same $\widetilde{\mathrm{GL}}_2^+(\mathbb R)$-orbit. Note that in this case, we have $\tau\cdot\sigma=\sigma$.

\begin{remark}\label{rmk:injective-Z}
If $\sigma=(Z, \cP)$ is a Serre-invariant stability condition on $\Ku(X)$, then by \cite[Lemma~A.8]{JLLZ}, $Z\colon \KK_{\mathrm{num}}(\Ku(X))\to \CC$ is injective. Hence, $Z_{\RR}$ is an isomorphism between $\RR$-vector spaces.
\end{remark}

Any Serre-invariant stability condition on $\Ku(X)$ induces a stability condition on $\Ku(X)^{\ZZ/2}$. More precisely, we have:

\begin{theorem}
\label{thm:cover-stability}
Let $X$ be a GM threefold and $\sigma=(Z,\cP)$ be a Serre-invariant stability condition on $\Ku(X)$. There is a stability condition
$\omega=(Z_{\omega},\cP_{\omega})$ on
$\Ku(X)^{\ZZ/2}$, where
\[
\cP_{\omega}(\phi)
=
\left\{
F\in \Ku(X)^{\ZZ/2}\mid\operatorname{Forg}(F)\in\cP(\phi)
\right\},
\qquad
Z_{\omega}(F)
=
Z\bigl(\operatorname{Forg}(F)\bigr).
\]
For any $0\neq F\in\mathcal \Ku(X)^{\ZZ/2}$, the following statements hold:
\begin{enumerate}
\item $F$ is $\omega$-semistable if and only if
$\operatorname{Forg}(F)$ is $\sigma$-semistable;
\item if $F$ is $\omega$-stable, then
$\operatorname{Forg}(F)$ is $\sigma$-polystable;
\item if $\operatorname{Forg}(F)$ is $\sigma$-stable, then $F$
is $\omega$-stable.
\end{enumerate}
Moreover, the residual involution $\iota$ fixes
$\omega$.
\end{theorem}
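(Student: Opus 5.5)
This theorem is essentially the specialization of the general construction of induced stability conditions on equivariant categories, as in \cite{PPZ23} following Macr\`i--Mehrotra--Stellari and Polishchuk. The plan is to verify its hypotheses for the $\mathbb Z/2$-action generated by $\tau$ and then deduce (1)--(3) from the adjunction \eqref{eq:inf-forg}.

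\textbf{Invariance and the slicing.} Since $\sigma$ is Serre-invariant and $S_{\Ku(X)}\simeq\tau[2]$, we have $\tau\cdot\sigma=\sigma\cdot\widetilde g$ for some $\widetilde g\in\widetilde{\mathrm{GL}}_2^+(\mathbb R)$. Because $\tau$ acts trivially on $\KK_{\mathrm{num}}(\Ku(X))$ by Proposition~\ref{prop:enriques-structure}, $\widetilde g$ preserves $Z$. Since $\tau^2\simeq\mathrm{id}$, the element $\widetilde g$ is then an even shift $[2k]$. Serre-invariance forces the phases of $S$-images to stay within a window of width less than $2$, so $k=0$ and $\tau\cdot\sigma=\sigma$; this is also the remark recorded after Definition~\ref{def:invariant-stability}. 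Thus each $\cP(\phi)$ is $\tau$-stable, and we may define $\cP_\omega(\phi)$ as in the statement.

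\textbf{Slicing axioms and HN filtrations.} The Hom-vanishing for $\phi_1>\phi_2$ follows by adjunction. The key point is that $\Hom(F_1,F_2)$ is a direct summand of $\Hom(\operatorname{Forg}F_1,\operatorname{Forg}F_2)$, namely its $\mathbb Z/2$-invariants, and the latter vanishes. For HN filtrations, take the HN filtration of $\operatorname{Forg}(F)$. By uniqueness of HN filtrations and $\tau$-invariance of the slicing, the equivariant structure of $F$ preserves this filtration, so it descends to $\Ku(X)^{\mathbb Z/2}$ and its factors lie in $\cP_\omega$. This is the step I expect to be the main obstacle: the filtration must be lifted canonically with compatible equivariant structures. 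I would first try to cite \cite[Prop.~2.17]{MMS} / \cite{PPZ23}, which uses functoriality of truncations for slicings.

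\textbf{Support property.} We have $Z_\omega=Z\circ\operatorname{Forg}_*$. Take the quadratic form $Q\circ\operatorname{Forg}_*$ and add a large negative multiple of a form that is negative on $\ker\operatorname{Forg}_*$, which is finite-dimensional. This gives negativity on $\ker Z_\omega$ while keeping $Q\ge0$ on semistable objects after rescaling, exactly as in the standard argument.

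\textbf{Items (1)--(3).} Item (1) is the definition of $\cP_\omega$. For (3), suppose $\operatorname{Forg}F$ is stable and $F'\subset F$ is a destabilizing subobject in $\cP_\omega(\phi)$. Then $\operatorname{Forg}F'\subset\operatorname{Forg}F$ is a proper nonzero subobject of the same phase, which is impossible. For (2), let $F$ be $\omega$-stable. The socle of $\operatorname{Forg}F$ in the finite-length abelian category $\cP(\phi)$ is $\tau$-invariant, and it is preserved by the equivariant structure. So it lifts to a nonzero subobject of $F$, which by stability equals $F$; hence $\operatorname{Forg}F$ is semisimple, i.e.\ polystable.

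\textbf{Residual involution.} Finally, $\iota$ only changes the equivariant structure and $\operatorname{Forg}\circ\iota\simeq\operatorname{Forg}$. Therefore $\iota$ preserves each $\cP_\omega(\phi)$ and $Z_\omega$, so $\iota\cdot\omega=\omega$.
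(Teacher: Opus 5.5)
Your outline follows the paper's route. The paper simply invokes \cite[Theorem~4.8]{PPZ23} and then notes $\operatorname{Forg}\circ\iota\simeq\operatorname{Forg}$ for the last assertion. Your arguments for (1)--(3) are sound: exactness and conservativity of $\operatorname{Forg}\colon\cP_\omega(\phi)\to\cP(\phi)$ give (3), and the $\tau$-stable socle, carried by the linearization, gives (2). The $\iota$-invariance argument is also fine.

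In the invariance step the reasoning is muddled. The even-shift conclusion comes from $g$ fixing $Z$ while $Z_{\RR}$ is an isomorphism. What kills $k$ is $\tau^2\simeq\mathrm{id}$, which gives $\sigma=\tau^2\cdot\sigma=\sigma\cdot[4k]$. It is not a ``window of width less than $2$'': once $\tau\cdot\sigma=\sigma$, the functor $S=\tau[2]$ shifts phases by exactly $2$. This is minor, since the paper records $\tau\cdot\sigma=\sigma$.

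The step that genuinely fails is the support property. Because $Z_{\RR}$ is an isomorphism, $\ker Z_\omega=\ker(\operatorname{Forg}_*)_{\RR}$. So for a lattice on which $\operatorname{Forg}_*$ has a kernel, the support property amounts to a bound $\|x_{\ker}\|\le C\,|Z(\operatorname{Forg}F)|$ on the kernel component of $x=v(F)$, for every $\omega$-semistable $F$. Semistability of $\operatorname{Forg}F$ controls only $\operatorname{Forg}_*x$, not $x_{\ker}$. Hence there is no reason for any form $A\,Q\circ\operatorname{Forg}_*-B\,P$, with $P$ positive on $\ker\operatorname{Forg}_*$, to be $\ge 0$ on semistable classes. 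The ``rescaling'' you invoke presupposes exactly the bound to be proved. Also, as written, adding a negative multiple of a form that is negative on the kernel makes the form positive there.

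There are two correct fixes.
\begin{enumerate}
\item Take the lattice for $\omega$ to be $\mathrm{K}_{\mathrm{num}}(\Ku(X))$ via $\operatorname{Forg}_*$ (or its image). Then $Q$ itself works and $\ker\operatorname{Forg}_*$ never enters.
\item Take the lattice to be $\mathrm{K}_{\mathrm{num}}(\Ku(X)^{\ZZ/2})$, which is what the paper's conventions and its later use of the Mukai lattice of the cover call for. Then you need genuine input from the CY2 structure. An $\omega$-stable $F$ has $\hom(F,F)=\ext^2(F,F)=1$, hence $(v(F),v(F)):=-\chi(F,F)\ge -2$. By adjunction, $\ker\operatorname{Forg}_*=\Pi_{\Ku(X)}^{\perp}$, which is negative definite by the signature argument in Lemma~\ref{lem:rank-estimate-fixed}. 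Set $Q_\omega(x)=(x,x)+A\,|Z_\omega(x)|^2$ with $A\ge 2/\min_{0\neq a}|Z(a)|^2$. This form is negative definite on $\ker Z_\omega$ and nonnegative on stable classes. The usual convexity of $\{Q_\omega\ge 0\}$ along rays of $Z_\omega$ extends this to semistable classes.
\end{enumerate}
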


\begin{proof}
The construction and statements are
\cite[Theorem~4.8]{PPZ23}. The final assertion follows directly from
the construction, since
$\operatorname{Forg}\circ\iota\simeq\operatorname{Forg}$.
\end{proof}

We end this section by discussing moduli spaces of semistable objects.

Let $X$ be a GM threefold and $0\neq v\in \KK_{\rm num}(\Ku(X))$. For any Serre-invariant stability condition $\sigma$ on $\Ku(X)$, we denote by $\cM_{\sigma}(v,\phi)$ be the moduli stack of $\sigma$-semistable objects with phase $\phi$ and numerical class $v$. The substack of $\sigma$-stable objects is denoted by $\mathcal M_\sigma^{\mathrm{st}}(v,\phi)$. By \cite[Theorem~21.24]{BLMNSP21}, $\cM_{\sigma}(v,\phi)$ is an Artin stack of finite type over $\CC$ and $\mathcal M_\sigma^{\mathrm{st}}(v,\phi)$ is an open substack of $\cM_{\sigma}(v,\phi)$. Moreover, $\cM_{\sigma}(v,\phi)$ admits a proper good moduli space $M_{\sigma}(v,\phi)$ whose closed points correspond to
S-equivalence classes of $\sigma$-semistable objects, or equivalently to
isomorphism classes of $\sigma$-polystable objects.

\section{Normality of bridgeland moduli space}

Throughout this section, we fix $X$ to be a GM threefold, $\cC\coloneqq \Ku(X)$.

\subsection{Local Model}

We will use the following result from \cite{ChenPertusiZhao2024} as the tool for calculating the local model of the moduli space $\bigl(M_\sigma(C,v),[F]\bigr)$.

\begin{theorem}
\label{thm:CPZ-kuranishi}
Let $X$ be a smooth Gushel--Mukai threefold, set
$\mathcal C=\operatorname{Ku}(X)$, and let $\sigma$ be a
Serre-invariant stability condition on $\mathcal C$. Let
$F\in\mathcal C$ be a $\sigma$-polystable object of numerical
class $v$, and set
$G_F:=\operatorname{Aut}_{\mathcal C}(F)$. Then the derived
endomorphism DG-Lie algebra
$\mathbf R\Hom_{\mathcal C}(F,F)$ is formal.

Let
\[
\kappa_{2,F}\colon
\operatorname{Ext}^1_{\mathcal C}(F,F)
\longrightarrow
\operatorname{Ext}^2_{\mathcal C}(F,F),
\qquad
x\longmapsto x\circ x,
\]
be the quadratic part of the Kuranishi map. Then there is an
isomorphism
\[
\bigl(M_\sigma(\mathcal C,v),[F]\bigr)
\simeq
\bigl(
\kappa_{2,F}^{-1}(0)
\mathbin{/\!\!/}G_F,0
\bigr)
\]
of germs of complex analytic spaces, where $G_F$ acts naturally on
$\operatorname{Ext}^1_{\mathcal C}(F,F)$.
\end{theorem}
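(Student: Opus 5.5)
The plan is to deduce formality of $\mathbf R\Hom_{\mathcal C}(F,F)$ from formality on the Calabi--Yau cover $\operatorname{Ku}(X)^{\mathbb Z/2}$, and then obtain the local model from general deformation theory of objects together with the local structure theorem for stacks with good moduli spaces.

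\emph{Step 1: pass to the cover.} Set $\widetilde F:=\operatorname{Inf}(F)$. By \eqref{eq:inf-forg}, $\operatorname{Forg}(\widetilde F)\simeq F\oplus\tau F$. Since $\tau\cdot\sigma=\sigma$, the object $\tau F$ is $\sigma$-polystable of the same phase as $F$. By Theorem~\ref{thm:cover-stability}(1), $\widetilde F$ is then $\omega$-semistable. I would check that it is in fact $\omega$-polystable, using that each stable summand $E$ of $F$ inflates to a direct sum of $\omega$-stable objects: $\operatorname{Inf}E$ is either stable, or splits into the two equivariant structures when $E\simeq\tau E$.

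Since $\operatorname{Ku}(X)^{\mathbb Z/2}\simeq\operatorname{Ku}(X^{\mathrm{op}})$ is a $2$-Calabi--Yau category, I would invoke the known formality of derived endomorphism algebras of polystable objects in such categories. For K3 surfaces this is the Budur--Zhang / Davison / Arbarello--Saccà result. For GM fourfolds it follows from the same argument, using a cyclic $A_\infty$ structure compatible with the Serre pairing. This gives formality of $\mathbf R\Hom(\widetilde F,\widetilde F)$.

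\emph{Step 2: descend formality.} By adjunction and \eqref{eq:inf-forg},
\[
\mathbf R\Hom(\widetilde F,\widetilde F)\simeq \mathbf R\Hom_{\mathcal C}(F,F)\oplus\mathbf R\Hom_{\mathcal C}(F,\tau F).
\]
The isomorphism $\iota\operatorname{Inf}\simeq\operatorname{Inf}$ induces a $\mathbb Z/2$-action on the DG algebra $\mathbf R\Hom(\widetilde F,\widetilde F)$ whose invariant summand is $\mathbf R\Hom_{\mathcal C}(F,F)$, as a DG subalgebra. I would choose the homotopy transfer data $\mathbb Z/2$-equivariantly, which is possible by averaging because the group is finite and we are in characteristic zero. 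Then the minimal $A_\infty$-model, and a formality quasi-isomorphism, can be taken equivariant, and restricting to invariants gives formality of $\mathbf R\Hom_{\mathcal C}(F,F)$.

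This descent is the step I expect to be the main obstacle. The $2$-Calabi--Yau formality proofs are not a priori equivariant, so one must check that the relevant cyclic structure and the resulting vanishing of higher products can be chosen $\iota$-invariantly. As a fallback, I would redo the formality argument directly on $\mathcal C$. The Serre functor $\tau[2]$ gives a nondegenerate pairing
\[
\operatorname{Ext}^i(F,F)\times\operatorname{Ext}^{2-i}(F,\tau F)\to k,
\]
which serves as a twisted cyclic structure, and I would run the Kaledin--Lehn--Sorger style weight argument with it.

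\emph{Step 3: local model.} Because $F$ lies in a heart up to shift, $\operatorname{Ext}^{<0}(F,F)=0$. Hence the moduli stack of $\sigma$-semistable objects (an open substack of Lieblich's stack of complexes) has deformation theory at $[F]$ governed by the DG Lie algebra $\mathbf R\Hom(F,F)$, compatibly with the $G_F$-action.

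Formality makes the Kuranishi map purely quadratic, so the $G_F$-equivariant formal deformation space is the completion of $\kappa_{2,F}^{-1}(0)$ at $0$. The point $[F]$ is closed with linearly reductive stabilizer $G_F=\prod\mathrm{GL}(V_i)$. The Alper--Hall--Rydh local structure theorem then gives an étale (hence analytic) neighbourhood of the form $[W/G_F]$, whose completion at the fixed point matches the formal model; equivariant Artin approximation upgrades this to an isomorphism of analytic germs. Finally, taking good moduli spaces commutes with this étale slice, which yields $(M_\sigma(\mathcal C,v),[F])\simeq(\kappa_{2,F}^{-1}(0)\mathbin{/\!\!/}G_F,0)$.
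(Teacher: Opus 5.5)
\textbf{There is a genuine gap, in Step~2.} That step carries the first assertion of the theorem, formality of $\mathbf R\Hom_{\mathcal C}(F,F)$, and it does not go through as written.

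\emph{Why the averaging argument fails.} Averaging the homotopy-transfer data does give an $\iota$-equivariant minimal model $(H,m_\bullet)$ of $A:=\mathbf R\Hom_{\mathcal D}(\operatorname{Inf}F,\operatorname{Inf}F)$. However, formality of $A$ only supplies a \emph{non-equivariant} $A_\infty$-isomorphism $(H,m_\bullet)\to(H,m_2)$. Such an isomorphism cannot be averaged, because the $A_\infty$-morphism equations are nonlinear in its components.

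\emph{Why invariants do not come for free.} Formality does not pass automatically to $A^{\iota}=\mathbf R\Hom_{\mathcal C}(F,F)$. This is a subalgebra but not an algebra retract, since the $(-1)$-part $\mathbf R\Hom_{\mathcal C}(F,\tau F)$ multiplies into it. So ``restrict to invariants'' presupposes exactly the equivariant formality that is at issue.

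\emph{Why the fallback does not help.} The pairing $\operatorname{Ext}^i(F,F)\times\operatorname{Ext}^{2-i}(F,\tau F)\to\mathbb C$ pairs $\mathbf R\Hom(F,F)$ with a different space, so it is not a cyclic structure on $\mathbf R\Hom_{\mathcal C}(F,F)$. To exploit it you would need a twisted-cyclic minimal model on the two objects $F,\tau F$. That is again a chain-level, $\iota$-compatible Calabi--Yau structure on the cover, which you have not supplied.

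\textbf{How the gap could be closed.} One way is to make formality a property of a chosen minimal model, rather than the existence of an isomorphism. Suppose the $A_\infty$-category of the stable factors of $\operatorname{Inf}F$ admits an $\iota$-equivariant, strictly unital, cyclic minimal model. Then $m_n=0$ for $n\ge3$ for degree reasons:
\begin{itemize}
\item inputs from $\operatorname{Ext}^0$ (identities) die by strict unitality;
\item an $\operatorname{Ext}^2$ input forces output degree $\ge3$;
\item for $x_i\in\operatorname{Ext}^1$, the value $m_n(x_1,\dots,x_n)\in\operatorname{Ext}^2$ is detected by pairing with $\operatorname{Ext}^0$, and cyclicity moves that identity into an input, where it dies.
\end{itemize}
The invariant part of this model is then a formal model of $\mathbf R\Hom_{\mathcal C}(F,F)$. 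Alternatively, Kaledin's formality obstruction class is canonical, hence $\iota$-invariant, and finite-group invariants are exact in characteristic~$0$.

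\textbf{Problems in Step~1.} The cyclic argument needs a strong Calabi--Yau structure on the dg enhancement, not merely $S_{\mathcal D}\simeq[2]$. That is also the missing input behind your claim that the GM fourfold case ``follows from the same argument.'' In the K3 case, Budur--Zhang's Bridgeland version is stated for generic stability conditions. But $\sigma_{\mathcal D}$ is not generic for $\operatorname{Inf}_*v$ once $F$ has a $\tau$-fixed summand $E$: the two lifts $E^{\pm}$ are non-isomorphic stable objects with equal central charge, in general of different classes. So you need a stability-independent formality result on the cover.

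\textbf{Comparison with the paper.} The paper sidesteps all of this by taking formality directly from \cite[Corollary~3.12]{ChenPertusiZhao2024}, and the local model from \cite[Proposition~3.11]{ChenPertusiZhao2024}. Your Step~3 is essentially that second input; the paper's own Lemma~\ref{lem:etale-chart-simple-points} runs the same Alper--Hall--Rydh argument. There, $G_F$-equivariance of the formal isomorphism is obtained from \cite[Proposition~4.7]{ArbarelloSacca2018}, a point you should make explicit rather than assert.
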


\begin{proof}
The formality assertion follows from
\cite[Corollary~3.12]{ChenPertusiZhao2024}. Formality implies that the
Kuranishi space satisfies the quadracity property, and the asserted
description of the analytic germ of the good moduli space is
\cite[Proposition~3.11]{ChenPertusiZhao2024}.
\end{proof}

\subsection{The Serre-twisted quadratic model}

Fix a closed point of $M_\sigma(C,mv_0)$, represented by a $\sigma$-polystable
object $E\simeq\bigoplus_{i\in I}E_i\otimes V_i$, where the $E_i$ are pairwise
non-isomorphic stable objects of the same phase. By Remark~\ref{rmk:injective-Z},
after deleting zero summands we have $[E_i]=m_i v_0$ with $m_i>0$.

Notice $\tau E_i$ is again stable of the same phase. We enlarge $I$, if necessary, by adding the $\tau$-translates of the stable factors
and allowing the corresponding multiplicity spaces $V_i$ to be zero. Thus we may
assume that there is an involution $\nu\colon I\to I$ such that $\tau(E_i)\simeq
E_{\nu(i)}$.

By \cite[Lemma~4.3(i)]{FGLZ25}, for an odd-dimensional Gushel--Mukai variety the
involution $S_{\Ku(X)}[-2]$ acts trivially on $K_{\mathrm{num}}(\Ku(X))$. Since
$X$ is a GM threefold and $\tau=S_C[-2]$, the involution $\tau$ acts trivially
on $K_{\mathrm{num}}(C)$. Hence
$[E_{\nu(i)}]=[\tau(E_i)]=[E_i]=m_i v_0$,
and thus $m_{\nu(i)}=m_i$.

For $i,j\in I$, set
$A_{ij}:=\operatorname{Ext}^1_C(E_i,E_j)$ and $a_{ij}:=\dim A_{ij}$. For a
dimension vector $\mathbf d=(d_i)_{i\in I}$, choose vector spaces $U_i$ with
$\dim U_i=d_i$. We use the following notation throughout the rest of the section:
\begin{enumerate}
    \item $\text{local representation  space}: \operatorname{Rep}_{\mathbf d}
:=\bigoplus_{i,j\in I}\Hom(U_i,U_j)\otimes A_{ij}$
    \item $\text{obstruction space}: \operatorname{Obs}_{\mathbf d}
:=\bigoplus_{i\in I}\Hom(U_i,U_{\nu(i)})$
    \item $\text{stablizer group}: G_{\mathbf d}:=\prod_{i\in I}\operatorname{GL}(U_i)$
    \item $\text{expected dimesion of zero fibre}: \operatorname{expdim}(\mathbf d):=\dim\operatorname{Rep}_{\mathbf d}
-\dim\operatorname{Obs}_{\mathbf d}$
    \item\text{weight}: $\operatorname{wt}(\mathbf d):=\sum_{i\in I}m_i d_i$
\end{enumerate}
The Serre pairings $A_{ij}\otimes A_{j,\nu(i)}\to\mathbb C$ define a quadratic map
$\mu_{\mathbf d}\colon \operatorname{Rep}_{\mathbf d}\longrightarrow
\operatorname{Obs}_{\mathbf d}.$ After choosing Serre-dual bases $B_{ij}\subset A_{ij}$ and
$B_{ij}^\vee\subset A_{j,\nu(i)}$, this map may be written as
\begin{equation*}
(\mu_{\mathbf d}(x))_i=
\sum_{j\in I}\sum_{\alpha\in B_{ij}}\epsilon_\alpha x_{\alpha^\vee}x_\alpha
\in \Hom(U_i,U_{\nu(i)}),
\end{equation*}
where $\epsilon_\alpha\in\{\pm 1\}$ depends only on the chosen sign convention for
the Yoneda product. Finally, define the zero fibre to be:
\begin{equation*}
\mathfrak Z_{\mathbf d}:=\mu_{\mathbf d}^{-1}(0)\subset \operatorname{Rep}_{\mathbf d}.
\end{equation*}

\begin{lemma}\label{lem:serre-twisted-model}
Let $\mathbf d=(\dim V_i)_{i\in I}$. Then
\begin{equation*}
\operatorname{Ext}^1_C(E,E)\simeq \operatorname{Rep}_{\mathbf d},
\qquad
\operatorname{Ext}^2_C(E,E)\simeq \operatorname{Obs}_{\mathbf d}.
\end{equation*}
Under these identifications, $\kappa_{2,E}=\mu_{\mathbf d}$. Consequently,
\begin{equation*}
\bigl(M_\sigma(C,mv_0),[E]\bigr)\simeq
\bigl(\mathfrak Z_{\mathbf d}//G_{\mathbf d},0\bigr).
\end{equation*}
Moreover, $a_{ij}=v_0^2m_i m_j+\delta_{ij}+\delta_{j,\nu(i)}.$

\end{lemma}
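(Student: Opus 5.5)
We decompose $\operatorname{Ext}^\bullet_C(E,E)$ bilinearly along the direct sum $E\simeq\bigoplus_i E_i\otimes V_i$:
\[
\operatorname{Ext}^k_C(E,E)\simeq\bigoplus_{i,j\in I}\Hom(V_i,V_j)\otimes\operatorname{Ext}^k_C(E_i,E_j).
\]
For $k=1$ this is $\operatorname{Rep}_{\mathbf d}$ with $U_i=V_i$. For $k=2$ we use Serre duality $S_C\simeq\tau[2]$: $\operatorname{Ext}^2(E_i,E_j)\simeq\Hom(E_j,\tau E_i)^\vee=\Hom(E_j,E_{\nu(i)})^\vee$. Since the $E_i$ are stable of the same phase and pairwise non-isomorphic, Schur's lemma gives $\Hom(E_j,E_{\nu(i)})=\mathbb C$ if $j=\nu(i)$ and $0$ otherwise. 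Hence only the summands with $j=\nu(i)$ survive, and each is one-dimensional, which yields $\operatorname{Obs}_{\mathbf d}$ (after fixing a generator of $\operatorname{Ext}^2(E_i,E_{\nu(i)})$ for each $i$). The same Schur argument applied to $\Hom(E_i,E_j)$ gives $G_E=\operatorname{Aut}(E)\simeq\prod_i\operatorname{GL}(V_i)=G_{\mathbf d}$, acting by conjugation, i.e.\ in the standard way on both $\operatorname{Rep}_{\mathbf d}$ and $\operatorname{Obs}_{\mathbf d}$.

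Next we identify $\kappa_{2,E}$. The Yoneda square $x\mapsto x\circ x$ of $x=\sum x_{ij}\otimes \xi_{ij}$ has $(i,\nu(i))$-component $\sum_j (x_{j,\nu(i)}\circ x_{ij})\otimes(\xi_{j,\nu(i)}\circ\xi_{ij})$. Here the composition $\operatorname{Ext}^1(E_i,E_j)\otimes\operatorname{Ext}^1(E_j,E_{\nu(i)})\to\operatorname{Ext}^2(E_i,E_{\nu(i)})\simeq\mathbb C$ is exactly the Serre pairing $A_{ij}\otimes A_{j,\nu(i)}\to\mathbb C$. This pairing is perfect, since Serre duality is realized by composition followed by the trace $\operatorname{Ext}^2(E_i,\tau E_i)\to\mathbb C$. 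Expanding in dual bases $B_{ij},B_{ij}^\vee$ gives the displayed formula for $\mu_{\mathbf d}$. The signs $\epsilon_\alpha$ absorb the graded-commutativity convention, and the components with target not of the form $U_{\nu(i)}$ vanish. Combined with Theorem~\ref{thm:CPZ-kuranishi} (formality, so the germ is $\kappa_{2,E}^{-1}(0)/\!\!/G_E$), this proves the germ isomorphism. The main point needing care is compatibility: the trace normalizations for different $i$ must be chosen consistently so that the identification is $G_{\mathbf d}$-equivariant. Since each $\operatorname{Ext}^2(E_i,E_{\nu(i)})$ is a line on which $G_{\mathbf d}$ acts trivially (scalars in $\operatorname{Aut}(E_i)$ act trivially on it only up to the twisting by the $V$-factors, which are already accounted for), any choice works.

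Finally we compute $a_{ij}=\dim A_{ij}$ via the Euler form. We have $\chi(E_i,E_j)=\hom-\ext^1+\ext^2$, and $\ext^k=0$ for $k\notin\{0,1,2\}$: this holds for $k<0$ by phase reasons (same heart), and for $k>2$ by Serre duality. Using Schur and the computation above, $\hom(E_i,E_j)=\delta_{ij}$ and $\ext^2(E_i,E_j)=\delta_{j,\nu(i)}$. Since $[E_i]=m_iv_0$ and $v^2=-\chi(v,v)$, we get $\chi(E_i,E_j)=-m_im_jv_0^2$. Therefore $a_{ij}=m_im_jv_0^2+\delta_{ij}+\delta_{j,\nu(i)}$. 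Because the indices added to $I$ may carry $V_i=0$, these formulas hold for all $i,j\in I$. Nothing here is difficult; the only subtle step is the sign and normalization bookkeeping in matching $\kappa_2$ with $\mu_{\mathbf d}$.
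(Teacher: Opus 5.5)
Your proposal is correct and follows essentially the same route as the paper: decompose $\operatorname{Ext}^\bullet_C(E,E)$ along $E\simeq\bigoplus_i E_i\otimes V_i$, use $S_C\simeq\tau[2]$ and Schur's lemma to keep only the $j=\nu(i)$ summands of $\operatorname{Ext}^2$, identify the Yoneda square with the Serre-pairing map $\mu_{\mathbf d}$ and invoke Theorem~\ref{thm:CPZ-kuranishi}, then read off $a_{ij}$ from the Euler form. Your additional remarks make explicit several points the paper leaves implicit: that $\operatorname{Aut}(E)\simeq G_{\mathbf d}$ compatibly with the actions, that the choice of generators of the lines $\operatorname{Ext}^2(E_i,E_{\nu(i)})$ does not affect equivariance, and that self-Exts vanish outside degrees $0,1,2$.
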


\begin{proof}
The decomposition of $E$ gives
\begin{equation*}
\operatorname{Ext}^1_C(E,E)\simeq
\bigoplus_{i,j}\Hom(U_i,U_j)\otimes\operatorname{Ext}^1_C(E_i,E_j)
=\operatorname{Rep}_{\mathbf d}.
\end{equation*}
Since $S_C\simeq \tau[2]$, Serre duality gives
$A_{ij}^\vee\simeq \operatorname{Ext}^1_C(E_j,\tau E_i)\simeq A_{j,\nu(i)}$. Similarly,
\begin{equation*}
\operatorname{Ext}^2_C(E_i,E_j)\simeq
\Hom_C(E_j,\tau E_i)^\vee\simeq
\Hom_C(E_j,E_{\nu(i)})^\vee.
\end{equation*}
Since $E_j$ and $E_{\nu(i)}$ are stable objects
of the same phase, we have
\begin{equation*}
\operatorname{Ext}^2_C(E_i,E_j)\simeq
\begin{cases}
\mathbb C, & j=\nu(i),\\
0, & j\ne \nu(i).
\end{cases}
\end{equation*}
Thus $\operatorname{Ext}^2_C(E,E)\simeq\operatorname{Obs}_{\mathbf d}$. Under these
identifications, the quadratic part of the Kuranishi map is the Yoneda product
$x\mapsto x\circ x$, which is exactly the Serre-twisted map $\mu_{\mathbf d}$.
The analytic local model then follows from Theorem~\ref{thm:CPZ-kuranishi}.

It remains to compute $a_{ij}$. Since $[E_i]=m_i v_0$ and $[E_j]=m_jv_0$, we
have $-\chi(E_i,E_j)=v_0^2m_im_j$. Moreover,
$\Hom(E_i,E_j)=\delta_{ij}$ and, by the calculation above,
$\operatorname{ext}^2(E_i,E_j)=\delta_{j,\nu(i)}$. Therefore
$a_{ij}=\operatorname{ext}^1(E_i,E_j)=v_0^2m_im_j+\delta_{ij}+\delta_{j,\nu(i)}$.
\end{proof}

A point $x\in \mathfrak Z_{\mathbf d}$ is a collection of linear maps
$x_\alpha\colon U_i\to U_j$, indexed by
$\alpha\in B_{ij}$, and a subrepresentation of $x$ is a collection of
subspaces $U_i'\subset U_i$ such that
$x_\alpha(U_i')\subset U_j'$ for every
$\alpha\in B_{ij}$. We call $x$ \emph{simple} if its only
subrepresentations are the zero collection and the whole collection
$(U_i)_{i\in I}$. Equivalently, $x$ defines a simple module over the
quadratic algebra determined by the relations $\mu_{\mathbf d}=0$.
In this case, Schur's lemma identifies
$\operatorname{Stab}_{G_{\mathbf d}}(x)$ with the diagonal scalar
subgroup $\mathbb G_m$, and the $G_{\mathbf d}$-orbit of $x$ is closed. The goal of the following lemma is to identify simple points in $\mathfrak Z_{\mathbf{d}}$ with $\sigma$-stable objects in $C$.

We denote $E_{\mathbf{d}} := \oplus_i E_i \otimes U_i$ respect to the dimension vector $\mathbf{d}$.

\begin{lemma}
\label{lem:etale-chart-simple-points}
Let $\mathbf d$ be a dimension vector. The pointed stacks
\[
\bigl([\mathfrak Z_{\mathbf d}//G_{\mathbf d}],[0]\bigr)
\quad\text{and}\quad
\Bigl(
\mathcal M_\sigma
\bigl(\mathcal C,\operatorname{wt}(\mathbf d)v_0\bigr),
[E_{\mathbf d}]
\Bigr)
\]
admit a common affine étale neighborhood. The two maps may be chosen
to preserve the stabilizer at the distinguished point and, after
shrinking, to be cartesian over étale maps of the corresponding good
moduli spaces. Consequently, if $x\in \mathfrak Z_{\mathbf d}$ is simple, then, after replacing
$x$ by a nonzero scalar multiple, it determines a $\sigma$-stable
object $S_x$ of class $\operatorname{wt}(\mathbf d)v_0$.

\end{lemma}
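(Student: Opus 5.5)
The plan is to upgrade the analytic isomorphism of germs in Lemma~\ref{lem:serre-twisted-model} (coming from Theorem~\ref{thm:CPZ-kuranishi}) to an étale-local statement at the level of stacks, using the local structure theorem of Alper--Hall--Rydh for stacks with good moduli spaces. The stack $\mathcal M:=\mathcal M_\sigma(\mathcal C,\operatorname{wt}(\mathbf d)v_0)$ is of finite type with affine diagonal near $[E_{\mathbf d}]$, the point $[E_{\mathbf d}]$ is closed, and its stabilizer $\operatorname{Aut}(E_{\mathbf d})=G_{\mathbf d}$ is linearly reductive. Hence there is an affine scheme $W$ with a $G_{\mathbf d}$-action, a fixed point $w$, and an étale map $f\colon([W/G_{\mathbf d}],w)\to(\mathcal M,[E_{\mathbf d}])$ which induces an isomorphism of stabilizers at $w$. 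After shrinking, $f$ is strongly étale, i.e.\ cartesian over the étale map $W/\!\!/G_{\mathbf d}\to M_\sigma$. We may also take $W$ to be a $G_{\mathbf d}$-equivariant closed subscheme of the tangent space $\operatorname{Ext}^1(E_{\mathbf d},E_{\mathbf d})=\operatorname{Rep}_{\mathbf d}$, with $T_wW=\operatorname{Rep}_{\mathbf d}$.

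The second step is to compare $W$ with $\mathfrak Z_{\mathbf d}$, which carries the same $G_{\mathbf d}$-action and is a cone. Formality (Theorem~\ref{thm:CPZ-kuranishi}) shows that the complete local ring $\widehat{\mathcal O}_{W,w}$ is $G_{\mathbf d}$-equivariantly isomorphic to $\widehat{\mathcal O}_{\mathfrak Z_{\mathbf d},0}$, since both pro-represent the deformation functor of $E_{\mathbf d}$, the second by the quadraticity statement. I will then apply the equivariant Artin approximation of Alper--Hall--Rydh: two pointed affine $G$-schemes with fixed points and equivariantly isomorphic completions admit a common equivariant affine étale neighborhood $(W',w')$ with étale equivariant maps to both, and these maps can be chosen strongly étale after shrinking. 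Composing gives the common affine étale neighborhood $[W'/G_{\mathbf d}]$ of the two pointed stacks, with stabilizers preserved and cartesian over the good moduli spaces. This step is where I expect the main difficulty: one has to check that the CPZ isomorphism is $G_{\mathbf d}$-equivariant at the level of complete local rings, and not only of analytic germs of the quotients. My first approach is to rerun the argument of \cite[Prop.~3.11]{ChenPertusiZhao2024} with the equivariant deformation functor, which is naturally $G_{\mathbf d}$-equivariant.

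For the consequence, let $x\in\mathfrak Z_{\mathbf d}$ be simple. Its orbit is closed and its stabilizer is $\mathbb G_m$. Since $\mathfrak Z_{\mathbf d}$ is a cone, $tx$ is simple for every $t\neq0$, and $tx\to0$ as $t\to0$. For small $t$, the closed orbit $G_{\mathbf d}tx$ therefore lies in the image of any neighborhood of $0$, so after rescaling we may assume $x$ lifts to a point $x'\in W'$ whose orbit is closed with stabilizer $\mathbb G_m$. Strong étaleness preserves closedness of points and stabilizers, so $x'$ maps to a closed point of $\mathcal M$, i.e.\ a polystable object $S_x$ with $\operatorname{Aut}(S_x)=\mathbb G_m$. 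A polystable object with one-dimensional automorphism group has a single summand of multiplicity one, so $S_x$ is $\sigma$-stable of class $\operatorname{wt}(\mathbf d)v_0$.
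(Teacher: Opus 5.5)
Your proposal is correct and follows essentially the paper's argument. Formality of $\operatorname{RHom}(E_{\mathbf d},E_{\mathbf d})$ identifies the two completions, and this identification is made $G_{\mathbf d}$-equivariant using reductivity; the paper cites \cite[Prop.~4.7]{ArbarelloSacca2018} for exactly the point you flag, rather than re-running Chen--Pertusi--Zhao. Then \cite[Thm.~4.19, Props.~3.2, 4.13]{AHR20} give the common affine, stabilizer-preserving, strongly \'etale neighborhood, and your rescaling argument for simple points matches the paper's; the remaining differences are cosmetic (the paper applies Thm.~4.19 directly to $\mathcal M_\sigma$ and $[\mathfrak Z_{\mathbf d}/G_{\mathbf d}]$, so your preliminary chart $[W/G_{\mathbf d}]$ is unnecessary, and ``pro-represent'' should read ``are $G_{\mathbf d}$-equivariant hulls'', since $E_{\mathbf d}$ has nontrivial automorphisms).
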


\begin{proof}
The stabilizer of $[0]\in[\mathfrak Z_{\mathbf d}//G_{\mathbf d}]$ is
$G_{\mathbf d}$, and
\[
\operatorname{Aut}_{\mathcal C}(E_{\mathbf d})
\simeq
G_{\mathbf d}.
\]
The formal miniversal deformation spaces at the two distinguished
points are, respectively, the completion
$\widehat{\mathfrak Z_{\mathbf d}}_{\,0}$ and the formal Kuranishi space of
$E_{\mathbf d}$.

By \cite[Corollary~3.12]{ChenPertusiZhao2024},
$\operatorname{RHom}_{\mathcal C}(E_{\mathbf d},E_{\mathbf d})$ is
formal. Hence the formal Kuranishi space of $E_{\mathbf d}$ is
isomorphic to $\widehat{\mathfrak Z_{\mathbf d}}_{\,0}$. The formality
quasi-isomorphism may be chosen to induce the identity on cohomology,
and therefore the resulting formal isomorphism induces the identity on
the tangent space
$\operatorname{Ext}^1_{\mathcal C}(E_{\mathbf d},E_{\mathbf d})$.
Since $G_{\mathbf d}$ is reductive,
\cite[Proposition~4.7]{ArbarelloSacca2018} allows this formal isomorphism to be
chosen $G_{\mathbf d}$-equivariantly.

By \cite[Theorem~4.19]{AHR20}, there are an affine
$G_{\mathbf d}$-scheme $T$, a fixed point $t\in T$, and pointed étale
morphisms
\[
([T//G_{\mathbf d}],[t])
\longrightarrow
([\mathfrak Z_{\mathbf d}/G_{\mathbf d}],[0]),
\qquad
([T//G_{\mathbf d}],[t])
\longrightarrow
\Bigl(
\mathcal M_\sigma
\bigl(\mathcal C,\operatorname{wt}(\mathbf d)v_0\bigr),
[E_{\mathbf d}]
\Bigr),
\]
inducing isomorphisms on stabilizers at $[t]$. Since the target stacks
have affine diagonal, after shrinking around $[t]$ we may assume that
both morphisms are affine by
\cite[Proposition~3.2]{AHR20}. Applying
\cite[Proposition~4.13]{AHR20} to each morphism and shrinking again,
we may further assume that both maps are cartesian over étale maps of
the corresponding good moduli spaces.

Let $x\in Z_{\mathbf d}$ be simple. Since
$\mu_{\mathbf d}$ is homogeneous of degree $2$, one has
$\lambda x\in Z_{\mathbf d}$ for every $\lambda\in\mathbb C$.
For $\lambda\neq0$, the representations $x$ and $\lambda x$
have the same subrepresentations, so $\lambda x$ remains simple.
Moreover, we have $a_{\lambda x}=\lambda a_x$
and $d\mu_{\mathbf d,\lambda x} = \lambda\,d\mu_{\mathbf d,x}.$ Thus nonzero rescaling does not change the kernels, images, or
cokernels appearing below.

Now let $
f\colon[T//G_{\mathbf d}]
\longrightarrow
[\mathfrak Z_{\mathbf d}//G_{\mathbf d}]$
be the first étale morphism. Its image is an open substack containing
$[0]$. The morphism
\[
\mathbb A^1\longrightarrow[\mathfrak Z_{\mathbf d}//G_{\mathbf d}],
\qquad
\lambda\longmapsto[\lambda x],
\]
sends $0$ to $[0]$. Hence, for some $\lambda\neq0$, the point
$[\lambda x]$ lies in the image of $f$. Choose a lift
$y\in[T//G_{\mathbf d}]$, and denote its image in
$\mathcal M_\sigma
(\mathcal C,\operatorname{wt}(\mathbf d)v_0)$
by $S_x$.

Since $x$ is simple, its $G_{\mathbf d}$-orbit is closed and $\operatorname{Stab}_{G_{\mathbf d}}(x)
=
\Delta\mathbb G_m$. The two étale charts are cartesian over étale maps
of their good moduli spaces. They therefore preserve closed points
and stabilizers. It follows that $S_x$ is polystable and
\[
\operatorname{Aut}_{\mathcal C}(S_x)\simeq\mathbb G_m.
\]
But $\operatorname{Aut}_{\mathcal C}(S_x)$ is isomorphic to $\mathbb G_m$ only when there is one
stable summand with multiplicity one, so $S_x$ is stable. By
construction, its numerical class is
$\operatorname{wt}(\mathbf d)v_0$.
\end{proof}

We next prove the rank estimate needed at stable objects fixed by $\tau$. Let
$\mathcal{D}=\mathcal{C}^{\mathbb Z/2}$ be the CY2 cover of $\mathcal{C}$, and let $\iota\colon \mathcal{D}\to \mathcal{D}$ denote
the residual involution. We write $\operatorname{Forg}\colon D\to \mathcal{C}$ for the forgetful
functor and $\operatorname{Inf}\colon \mathcal{C}\to \mathcal{D}$ for the inflation functor. The induced
stability condition on $\mathcal{D}$ defined as in \cite[Theorem 4.8]{ppzEnriques2021} will be denoted by $\sigma_\mathcal{D}$.

\begin{lemma} \label{lem:rk-equal-ext}
Let $S\in\mathcal C$ be a $\sigma$-stable object such that
$S\simeq\tau S$. Choose a linearization
$\varphi\colon S\xrightarrow{\sim}\tau S$, normalized so that
$\tau(\varphi)\circ\varphi=id_S$. Let $F:=(S,\varphi)$ and
$\iota F:=(S,-\varphi)$ be the two corresponding objects of
$\mathcal D=\mathcal C^{\mathbb Z/2}$. Then:
\begin{enumerate}
    \item $\operatorname{Inf}(S)\simeq F\oplus\iota F$.

    \item The objects $F$ and $\iota F$ are non-isomorphic
    $\sigma_{\mathcal D}$-stable objects of the same phase.

    \item If
    $T_\varphi\colon\Ext^1_{\mathcal C}(S,S)\to
    \Ext^1_{\mathcal C}(S,S)$ is the involution
    $T_\varphi(\alpha):=\varphi^{-1}\circ\tau(\alpha)\circ\varphi$,
    then
    \[
    \Ext^1_{\mathcal C}(S,S)^+
    \simeq \Ext^1_{\mathcal D}(F,F),
    \qquad
    \Ext^1_{\mathcal C}(S,S)^-
    \simeq \Ext^1_{\mathcal D}(F,\iota F).
    \]

    \item Let
    $q_S\colon\Ext^1_{\mathcal C}(S,S)\to
    \Ext^2_{\mathcal C}(S,S)\simeq\mathbb C$
    be the scalar quadratic Kuranishi equation at $S$. Then
    \[
    \rk(q_S)
    =\dim\Ext^1_{\mathcal C}(S,S)^-
    =\ext^1_{\mathcal D}(F,\iota F).
    \]
\end{enumerate}
\end{lemma}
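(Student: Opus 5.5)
The plan is to use the equivariant adjunctions \eqref{eq:inf-forg} together with the stability comparison in Theorem~\ref{thm:cover-stability}, and then read off the quadratic form $q_S$ from Serre duality.

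For (1), note that $\operatorname{Inf}(S)$ is $S\oplus\tau S$ with the swap linearization. Using $\varphi$, identify $\tau S$ with $S$. The swap then becomes the involution $(a,b)\mapsto(\varphi^{-1}\tau(b),\tau(\varphi)\tau(a))$, and the normalization $\tau(\varphi)\circ\varphi=\mathrm{id}_S$ makes this an honest involution. Its $\pm1$ eigen-summands are $(S,\varphi)$ and $(S,-\varphi)$, which gives $\operatorname{Inf}(S)\simeq F\oplus\iota F$.

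For (2), stability follows from Theorem~\ref{thm:cover-stability}(3), since $\operatorname{Forg}(F)=\operatorname{Forg}(\iota F)=S$ is stable. To show $F\not\simeq\iota F$, compute by adjunction
\[
\Hom_{\mathcal D}(F,\iota F)=\Hom_{\mathcal C}(S,S)^{\text{anti-inv}}.
\]
Here $\Hom(S,S)=\mathbb C\cdot\mathrm{id}$, and $\mathrm{id}$ is invariant, not anti-invariant, so this space is $0$. The phases agree because $Z_\omega$ factors through $\operatorname{Forg}$.

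For (3), the general formula for morphisms in an equivariant category is
\[
\Hom_{\mathcal D}((A,\psi),(B,\psi'))=\Hom_{\mathcal C}(A,B)^{\mathbb Z/2},
\]
where the $\mathbb Z/2$-action is $\alpha\mapsto\psi'^{-1}\circ\tau(\alpha)\circ\psi$. Applying this to $\Ext^1$ with $(\psi,\psi')=(\varphi,\varphi)$ gives the $T_\varphi$-invariants. With $(\psi,\psi')=(\varphi,-\varphi)$ it gives the $T_\varphi$-anti-invariants. A consistency check is the splitting $\Ext^1_{\mathcal C}(S,S)=\Ext^1_{\mathcal D}(\operatorname{Inf}S,F)$, which equals $\Ext^1(F,F)\oplus\Ext^1(\iota F,F)$.

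Part (4) is the main step. By Lemma~\ref{lem:serre-twisted-model} with a single index $i$ and $\nu(i)=i$, the map $q_S(x)=x\circ x\in\Ext^2(S,S)\simeq\mathbb C$ is the quadratic form associated to the bilinear pairing $B(x,y)=\operatorname{tr}(x\circ y)$. Under Serre duality $S_{\mathcal C}=\tau[2]$, this pairing becomes $\langle x,y\rangle=\varphi_*(\tau\text{-twisted trace})$, and I would rewrite it as
\[
B(x,y)=\langle x,\,T_\varphi(y)\rangle_{\mathcal D}.
\]
Here $\langle\ ,\ \rangle_{\mathcal D}$ is the perfect Serre pairing obtained from the CY2 structure, $\Ext^1_{\mathcal D}(G,H)\times\Ext^1_{\mathcal D}(H,G)\to\mathbb C$. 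On the cover, the Yoneda pairing is nondegenerate and graded-skew on $\Ext^1$.

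I then decompose $\Ext^1_{\mathcal C}(S,S)=\Ext^1(F,F)\oplus\Ext^1(F,\iota F)$ and compute $q_S$ on each piece:
\begin{itemize}
\item On the $+$ part $\Ext^1_{\mathcal D}(F,F)$, the composition $x\circ x$ lands in $\Ext^2_{\mathcal D}(F,F)$, and the form is alternating by the CY2 symmetry. Hence $q_S$ vanishes on it, and the two eigenspaces are $B$-orthogonal.
\item On the $-$ part, the composition $x\circ x$ for $x\in\Ext^1(F,\iota F)$ must be formed as $\iota(x)\circ x\in\Ext^2(F,F)$. Pairing $\Ext^1(F,\iota F)$ with $\Ext^1(\iota F,F)\simeq\iota\Ext^1(F,\iota F)$ is a perfect pairing. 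Twisting by $\iota$ turns the antisymmetry into symmetry, because the extra sign comes from $-\varphi$, so $q_S$ restricted here is a nondegenerate symmetric form.
\end{itemize}
Together these give $\rk q_S=\dim\Ext^1(S,S)^-$.

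The main obstacle is the sign bookkeeping. One must check carefully that the Serre-duality pairing on $\Ext^1_{\mathcal C}(S,S)$ induced by $S_{\mathcal C}\simeq\tau[2]$ and the chosen $\varphi$ restricts to an alternating form on the invariants and a symmetric form on the anti-invariants, rather than the reverse. I would settle this by checking it on the $\operatorname{Inf}/\operatorname{Forg}$ adjunction units, using that the CY2 Serre pairing on $\mathcal D$ is antisymmetric in degree one.
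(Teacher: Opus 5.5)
Your proposal follows essentially the paper's proof: (1)--(3) via the equivariant adjunction splitting $\Ext^1_{\mathcal C}(S,S)\simeq\Ext^1_{\mathcal D}(F,F)\oplus\Ext^1_{\mathcal D}(F,\iota F)$, and (4) via CY2 skew-symmetry on the $+$ part together with the pairing $(b,c)\mapsto\operatorname{tr}_F(\iota(c)\circ b)$ on the $-$ part, whose symmetry the paper gets from graded cyclicity plus $\operatorname{tr}_{\iota F}(\iota\eta)=-\operatorname{tr}_F(\eta)$ --- exactly the sign from $-\varphi$ you single out. The one step to repair is the $B$-orthogonality of the eigenspaces: it does not follow from alternation on the $+$ part, but from $\Ext^2_{\mathcal D}(F,\iota F)=0$, so that $T_\varphi$ is trivial on $\Ext^2_{\mathcal C}(S,S)$ and the $T_\varphi$-equivariant Yoneda product kills mixed terms; alternatively, the twisted symmetry $B(x,y)=-B(T_\varphi(y),x)$ from Serre duality with $S_{\mathcal C}\simeq\tau[2]$ gives alternation on $+$, symmetry on $-$ and orthogonality at once, which also settles the sign bookkeeping you flag.
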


\begin{proof}
The splitting
$\operatorname{Inf}(S)\simeq(S,\varphi)\oplus(S,\varphi\otimes\chi)$
is \cite[Lemma~5.8]{PPZ23}. For $\mathbb Z/2$, the nontrivial
character $\chi$ changes $\varphi$ to $-\varphi$, so this is
precisely
$\operatorname{Inf}(S)\simeq F\oplus\iota F$. Notice that $F$ and $\iota F$ are non-isomorphic. Indeed, an
isomorphism $a\colon F\to\iota F$ would be a scalar automorphism
$a=\lambda id_S$ of the underlying stable object $S$. The
equivariance condition would give
$-\varphi\circ a=\tau(a)\circ\varphi$, and hence
$-\lambda\varphi=\lambda\varphi$, a contradiction. Since
$\operatorname{Forg}(F)=S=\operatorname{Forg}(\iota F)$, both objects have the same central
charge and phase. Any destabilizing subobject of $F$ or $\iota F$
would forget to a destabilizing subobject of the stable object $S$.
Thus $F$ and $\iota F$ are $\sigma_{\mathcal D}$-stable.

The Ext-decomposition follows from the adjunction between $\operatorname{Inf}$
and $\operatorname{Forg}$:
\[
\Ext^1_{\mathcal C}(S,S)
\simeq \Ext^1_{\mathcal D}(F,\operatorname{Inf}(S))
\simeq \Ext^1_{\mathcal D}(F,F)
\oplus \Ext^1_{\mathcal D}(F,\iota F).
\]
Under this decomposition, the first summand is the $+1$-eigenspace
of $T_\varphi$, while the second is the $-1$-eigenspace.

It remains to identify the rank of the scalar quadratic obstruction.
The $T_\varphi$-eigenspace decomposition is compatible with the
Yoneda product. The $+1$-directions are precisely the first-order
deformations of the stable object $F$ in the CY2
category $\mathcal D$. The stable moduli space in a CY2
category is smooth at $F$, so the scalar obstruction vanishes on
these directions. The mixed terms vanish because
$\Ext^2_{\mathcal D}(F,\iota F)=0$ and
$\Ext^2_{\mathcal D}(\iota F,F)=0$ as $F$ and $\iota F$
are non-isomorphic stable objects of the same phase and
$\mathcal D$ is CY2.

Set $V:=\Ext^1_{\mathcal D}(F,\iota F)$, and let
$\operatorname{tr}_F$ and $\operatorname{tr}_{\iota F}$ denote the CY2 trace
maps. Since the equivariant structures of $F$ and $\iota F$ are
$\varphi$ and $-\varphi$, respectively, one has
\[
\operatorname{tr}_{\iota F}\bigl(\iota(\eta)\bigr)=-\operatorname{tr}_F(\eta)
\]
for every $\eta\in\Ext^2_{\mathcal D}(F,F)$. Indeed, both trace maps
are induced from the Serre trace on $\mathcal C$, and replacing the
linearization $\varphi$ by $-\varphi$ changes the induced trace by
a sign.

Define a bilinear form $B\colon V\times V\to\mathbb C$ by
\[
B(b,c):=\operatorname{tr}_F\bigl(\iota(c)\circ b\bigr).
\]
This form is nondegenerate, since it is obtained from the perfect Serre pairing. Moreover, $B$ is symmetric. Indeed, graded cyclicity of the
$2$-Calabi--Yau trace and the preceding trace identity give
\begin{align*}
B(c,b)
&=\operatorname{tr}_F\bigl(\iota(b)\circ c\bigr) \\
&=-\operatorname{tr}_{\iota F}\bigl(c\circ\iota(b)\bigr) \\
&=-\operatorname{tr}_{\iota F}
   \bigl(\iota(\iota(c)\circ b)\bigr) \\
&=\operatorname{tr}_F\bigl(\iota(c)\circ b\bigr)
 =B(b,c).
\end{align*}

Under the identification of $V$ with the $-1$-eigenspace of
$\Ext^1_{\mathcal C}(S,S)$, the restriction of $q_S$ to $V$ is,
up to a nonzero scalar, the quadratic form $b\mapsto B(b,b)$.
Therefore its polar form is a nonzero scalar multiple of
$2B$, and is consequently nondegenerate. Since $q_S$ vanishes on
the $+1$-eigenspace and has no mixed terms, and thus we have the final assertion.
\end{proof}

 We now compare the linearized equations at that point with the deformation theory of the corresponding object.

\begin{lemma}
\label{lem:tangent-obstruction-simple-point}
Let $x\in \mathfrak Z_{\mathbf d}$ be simple, and let $S_x$ be the stable object
obtained from
Lemma~\ref{lem:etale-chart-simple-points}. Let $a_x\colon
\mathfrak g_{\mathbf d}
\longrightarrow
\operatorname{Rep}_{\mathbf d}$ be the infinitesimal action of
$\mathfrak g_{\mathbf d}:=\operatorname{Lie}(G_{\mathbf d})$ at $x$.
Then
\[
\frac{\ker(d\mu_{\mathbf d,x})}{\operatorname{im}(a_x)}
\simeq
\operatorname{Ext}^1_{\mathcal C}(S_x,S_x),
\qquad
\dim\operatorname{coker}(d\mu_{\mathbf d,x})
=
\operatorname{ext}^2_{\mathcal C}(S_x,S_x).
\]
In particular, $d\mu_{\mathbf d,x}$ is surjective when
$S_x\not\simeq\tau S_x$, whereas its cokernel is one-dimensional when
$S_x\simeq\tau S_x$.
\end{lemma}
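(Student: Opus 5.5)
We transport the tangent–obstruction data along the common étale chart of Lemma~\ref{lem:etale-chart-simple-points}, and compare the stacky tangent complexes at corresponding points.

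\emph{Chart side.} At a point $x$ of the quotient stack $[\mathfrak Z_{\mathbf d}/G_{\mathbf d}]$, presented as the zero locus of $\mu_{\mathbf d}$ in the smooth stack $[\operatorname{Rep}_{\mathbf d}/G_{\mathbf d}]$, the natural (truncated) deformation complex is
\[
\mathfrak g_{\mathbf d}\xrightarrow{\;a_x\;}\operatorname{Rep}_{\mathbf d}\xrightarrow{\;d\mu_{\mathbf d,x}\;}\operatorname{Obs}_{\mathbf d}.
\]
Its $H^1$ is $\ker(d\mu_{\mathbf d,x})/\operatorname{im}(a_x)$, the tangent space of the stack at $[x]$. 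Its $H^0$ is $\ker a_x=\operatorname{Lie}\operatorname{Stab}(x)=\mathbb C$ because $x$ is simple.

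\emph{Object side.} Since $S_x$ is stable, $\mathcal M_\sigma$ near $[S_x]$ has tangent space $\operatorname{Ext}^1_{\mathcal C}(S_x,S_x)$ and automorphism Lie algebra $\operatorname{Hom}(S_x,S_x)=\mathbb C$. An étale morphism of stacks that preserves stabilizers induces isomorphisms of tangent spaces, so the first identification follows. The chart points may differ from $x$ by a scalar, but the rescaling remark in the proof of Lemma~\ref{lem:etale-chart-simple-points} shows this does not affect kernels or images.

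\emph{Cokernel.} For the cokernel we use a dimension count rather than trying to identify obstruction theories directly, since the chart need not respect them. The stack $[\mathfrak Z_{\mathbf d}/G_{\mathbf d}]$ and $\mathcal M_\sigma$ are étale-locally isomorphic near the two points, so they have the same local dimension. Write $e:=\dim\operatorname{coker}(d\mu_{\mathbf d,x})$ and $r:=\operatorname{rk} d\mu_{\mathbf d,x}=\dim\operatorname{Obs}_{\mathbf d}-e$. Then
\[
\dim\ker(d\mu_{\mathbf d,x})/\operatorname{im}(a_x)=\dim\operatorname{Rep}_{\mathbf d}-r-\dim\mathfrak g_{\mathbf d}+1 .
\]
On the other hand, $\operatorname{ext}^1(S_x,S_x)=v^2+1+\operatorname{ext}^2(S_x,S_x)$ with $v=\operatorname{wt}(\mathbf d)v_0$. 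This uses $\hom=1$, $\operatorname{ext}^2\in\{0,1\}$ by Serre duality, since $\operatorname{Ext}^2(S_x,S_x)\simeq\operatorname{Hom}(S_x,\tau S_x)^\vee$, and the Euler characteristic $-\chi=v^2$. So equating the two tangent dimensions reduces the claim to the purely combinatorial identity
\[
\dim\operatorname{Rep}_{\mathbf d}-\dim\operatorname{Obs}_{\mathbf d}-\dim\mathfrak g_{\mathbf d}+1=v^2+1 .
\]

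\emph{The identity.} We prove it from the formula $a_{ij}=v_0^2m_im_j+\delta_{ij}+\delta_{j,\nu(i)}$ of Lemma~\ref{lem:serre-twisted-model}. Summing $d_id_ja_{ij}$ gives
\[
\dim\operatorname{Rep}_{\mathbf d}=v_0^2\operatorname{wt}(\mathbf d)^2+\sum_i d_i^2+\sum_i d_id_{\nu(i)} .
\]
The middle sum equals $\dim\mathfrak g_{\mathbf d}$ and the last sum equals $\dim\operatorname{Obs}_{\mathbf d}$, so the left side is $v_0^2\operatorname{wt}(\mathbf d)^2+1=v^2+1$, as needed. Combining with the tangent equality yields $e=\operatorname{ext}^2(S_x,S_x)$.

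The final dichotomy is then Serre duality: $\operatorname{ext}^2(S_x,S_x)=\hom(S_x,\tau S_x)$, which is $0$ or $1$ according as $S_x\not\simeq\tau S_x$ or $S_x\simeq\tau S_x$, because both objects are stable of the same phase.

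The main obstacle I expect is justifying that the étale chart identifies the tangent spaces at $x$ and at $S_x$ rather than only at the base point. This needs the stabilizer-preserving, cartesian property of the charts over good moduli spaces, and the fact that a representable étale map of algebraic stacks induces isomorphisms on tangent spaces and on automorphism Lie algebras. I would cite this standard fact, which appears in \cite{AHR20}.
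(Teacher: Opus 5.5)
Your proposal is correct and follows the paper's proof. You identify $\ker(d\mu_{\mathbf d,x})/\operatorname{im}(a_x)$ with $\operatorname{Ext}^1_{\mathcal C}(S_x,S_x)$ through the common étale chart (after the harmless rescaling), and then get $\dim\operatorname{coker}(d\mu_{\mathbf d,x})=\operatorname{ext}^2_{\mathcal C}(S_x,S_x)$ by an Euler-characteristic count using $\dim\ker a_x=1$. The only cosmetic difference is that you check $\dim\operatorname{Rep}_{\mathbf d}-\dim\operatorname{Obs}_{\mathbf d}-\dim\mathfrak g_{\mathbf d}=v^2$ directly from the formula for $a_{ij}$, whereas the paper reads it off as $-\chi(E_{\mathbf d},E_{\mathbf d})=-\chi(S_x,S_x)$ via the Ext identifications of Lemma~\ref{lem:serre-twisted-model}.
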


\begin{proof}
Replace $x$ by the sufficiently small nonzero scalar multiple used in
Lemma~\ref{lem:etale-chart-simple-points}, and continue to denote the
resulting point by $x$. Notice this does not affect the assertion as the kernels, images, and cokernels occurring in the statement do
not change under nonzero rescaling.

Since $\mathfrak Z_{\mathbf d}=\mu_{\mathbf d}^{-1}(0)$, its Zariski tangent
space at $x$ is
$T_x \mathfrak Z_{\mathbf d}=\ker(d\mu_{\mathbf d,x})$. Moreover,
$G_{\mathbf d}$-equivariance of $\mu_{\mathbf d}$ implies
$d\mu_{\mathbf d,x}\circ a_x=0$. Hence the tangent space of the
quotient stack at $[x]$ is
\[
T_{[x]}[\mathfrak Z_{\mathbf d}/G_{\mathbf d}]
\simeq
\frac{\ker(d\mu_{\mathbf d,x})}{\operatorname{im}(a_x)}.
\]
The common étale chart of
Lemma~\ref{lem:etale-chart-simple-points} identifies this tangent
space with the tangent space of the moduli stack at $[S_x]$.
Therefore
\[
\frac{\ker(d\mu_{\mathbf d,x})}{\operatorname{im}(a_x)}
\simeq
T_{[S_x]}
\mathcal M_\sigma
\bigl(\mathcal C,\operatorname{wt}(\mathbf d)v_0\bigr)
\simeq
\operatorname{Ext}^1_{\mathcal C}(S_x,S_x).
\]

To calculate the obstruction dimension, consider the complex
\[
C_x^\bullet
:=
\left[
\mathfrak g_{\mathbf d}
\xrightarrow{\,a_x\,}
\operatorname{Rep}_{\mathbf d}
\xrightarrow{\,d\mu_{\mathbf d,x}\,}
\operatorname{Obs}_{\mathbf d}
\right]
\]
in degrees $0,1,2$. Since $x$ is simple, its stabilizer is the
diagonal scalar subgroup, and hence
$\dim H^0(C_x^\bullet)=1$. The preceding tangent-space calculation
gives $
H^1(C_x^\bullet)
\simeq
\operatorname{Ext}^1_{\mathcal C}(S_x,S_x).$ Stability and pairwise nonisomorphism of the $E_i$ give $\operatorname{Ext}^0_{\mathcal C}(E_{\mathbf d},E_{\mathbf d})
\simeq\mathfrak g_{\mathbf d}$, while
Lemma~\ref{lem:serre-twisted-model} identifies the
degree-one and degree-two Ext groups with
$\operatorname{Rep}_{\mathbf d}$ and
$\operatorname{Obs}_{\mathbf d}$, respectively. The phase and
Serre-duality vanishings show that there are no self-Ext groups outside
degrees $0,1,2$. Thus
\[
\chi(C_x^\bullet)
=
\dim\mathfrak g_{\mathbf d}
-\dim\operatorname{Rep}_{\mathbf d}
+\dim\operatorname{Obs}_{\mathbf d}
=
\chi(E_{\mathbf d},E_{\mathbf d}).
\]

The objects $E_{\mathbf d}$ and $S_x$ have the same numerical class,
so their Euler characteristics agree. Since $S_x$ is stable and has
no self-Ext groups outside degrees $0,1,2$, we have
\[
\chi(S_x,S_x)
=
1-\operatorname{ext}^1_{\mathcal C}(S_x,S_x)
+\operatorname{ext}^2_{\mathcal C}(S_x,S_x).
\]
Comparing this with $\chi(C_x^\bullet)
=
\dim H^0(C_x^\bullet)
-\dim H^1(C_x^\bullet)
+\dim H^2(C_x^\bullet)$
gives $\dim H^2(C_x^\bullet) = \operatorname{ext}^2_{\mathcal C}(S_x,S_x).$ Since the complex terminates in $\operatorname{Obs}_{\mathbf d}$,
one has
$H^2(C_x^\bullet)=\operatorname{coker}(d\mu_{\mathbf d,x})$, proving
the second assertion.
\end{proof}

\begin{lemma}
\label{lem:local-fixed-simple}
Let $x\in \mathfrak Z_{\mathbf d}$ be simple, and let $S_x$ be the stable
object obtained from Lemma~\ref{lem:etale-chart-simple-points}. Assume that
$S_x\simeq\tau S_x$. Set
\[
h:=\operatorname{ext}^1_{\mathcal C}(S_x,S_x),
\qquad
\rho:=\operatorname{rk}(q_{S_x}),
\]
where $q_{S_x}$ is the scalar quadratic Yoneda square. Then there is
a noncanonical isomorphism
\[
\widehat{\mathcal O}_{Z_{\mathbf d},x}
\simeq
\frac{
\mathbb C[[u_1,\ldots,u_h,
z_1,\ldots,z_{\dim G_{\mathbf d}-1}]]
}{
(u_1^2+\cdots+u_\rho^2)
}.
\]
\end{lemma}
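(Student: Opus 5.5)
We compute the complete local ring of $\mathfrak Z_{\mathbf d}$ at $x$ by comparing two descriptions of the same germ: the global equations $\mu_{\mathbf d}=0$ near $x$, and the Kuranishi model of the stable object $S_x$ given by Theorem~\ref{thm:CPZ-kuranishi}. The common étale chart of Lemma~\ref{lem:etale-chart-simple-points} is the bridge between them.

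\emph{Step 1: normal form for $\mathfrak Z_{\mathbf d}$ at $x$.} By Lemma~\ref{lem:tangent-obstruction-simple-point}, $d\mu_{\mathbf d,x}$ has corank one. Choose a splitting $\operatorname{Obs}_{\mathbf d}=\operatorname{im}(d\mu_{\mathbf d,x})\oplus\mathbb C$. By the implicit function theorem, the components of $\mu_{\mathbf d}$ along $\operatorname{im}(d\mu_{\mathbf d,x})$ cut out, formally near $x$, a smooth germ $W$ of dimension
\[
\dim\ker(d\mu_{\mathbf d,x})=h+\dim\operatorname{im}(a_x)=h+\dim G_{\mathbf d}-1 .
\]
The second equality uses that the stabilizer of $x$ is $\mathbb G_m$. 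On $W$ the germ $\mathfrak Z_{\mathbf d}$ is cut out by a single function $f$, which vanishes to order at least two at $x$. So $\widehat{\mathcal O}_{\mathfrak Z_{\mathbf d},x}\simeq\mathbb C[[w_1,\dots,w_{h+\dim G_{\mathbf d}-1}]]/(f)$.

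\emph{Step 2: split off the orbit directions.} The orbit $G_{\mathbf d}\cdot x$ is closed with stabilizer $\mathbb G_m$. The group $\mathbb G_m$ is reductive and acts trivially on $\mathfrak Z_{\mathbf d}$, since scalars act by conjugation. Luna's étale slice theorem at $x$ therefore gives an étale-local isomorphism
\[
\mathfrak Z_{\mathbf d}\simeq G_{\mathbf d}\times^{\mathbb G_m}N_x \cong (G_{\mathbf d}/\mathbb G_m)\times N_x ,
\]
where $N_x$ is a slice with tangent space $\ker(d\mu_{\mathbf d,x})/\operatorname{im}(a_x)\simeq\operatorname{Ext}^1_{\mathcal C}(S_x,S_x)$. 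The factor $G_{\mathbf d}/\mathbb G_m$ supplies the smooth coordinates $z_1,\dots,z_{\dim G_{\mathbf d}-1}$. The étale charts of Lemma~\ref{lem:etale-chart-simple-points} are cartesian over good moduli spaces and preserve stabilizers. Hence the germ $(N_x,x)$ is identified with the deformation germ of $[S_x]$ in $\mathcal M_\sigma$ modulo the trivial $\mathbb G_m$, which is the Kuranishi germ of $S_x$.

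\emph{Step 3: Kuranishi germ of $S_x$.} By Theorem~\ref{thm:CPZ-kuranishi}, formality of $\mathbf R\Hom(S_x,S_x)$ applies. Together with $\operatorname{Ext}^2_{\mathcal C}(S_x,S_x)\simeq\mathbb C$, it shows that this germ is exactly $q_{S_x}^{-1}(0)\subset\operatorname{Ext}^1_{\mathcal C}(S_x,S_x)$, with no higher-order corrections. After diagonalizing the quadratic form $q_{S_x}$ over $\mathbb C$, it becomes $u_1^2+\cdots+u_\rho^2$ in coordinates $u_1,\dots,u_h$. Combining this with Step 2 gives the displayed isomorphism. By Lemma~\ref{lem:rk-equal-ext}, $\rho=\ext^1_{\mathcal D}(F,\iota F)$; this identification is not needed here but is recorded for later.

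\emph{Main obstacle.} The delicate point is Step 2: ensuring the identification of the slice germ with the Kuranishi germ is an isomorphism of complete local rings, not just of reduced or analytic germs. If the direct argument is not clean, I would first try the Artin--Rees/AHR comparison: both germs are prorepresenting hulls of the same deformation functor of $S_x$ (with trivialized $\mathbb G_m$). The étale chart matches these functors, and uniqueness of hulls gives the isomorphism. As a fallback, compare $f$ from Step 1 directly. Its quadratic part restricted to $\ker(d\mu_{\mathbf d,x})$ is the projection of the Yoneda square, which has rank $\rho$ by Lemma~\ref{lem:rk-equal-ext}. For $\rho\geq 1$, the Morse lemma with parameters then reduces $f$ to $u_1^2+\cdots+u_\rho^2+g$, with $g$ depending only on the remaining variables. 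One would then show $g=0$ using formality, or more precisely that $\mathfrak Z_{\mathbf d}$ contains the orbit directions and the CY2 smooth directions of Lemma~\ref{lem:rk-equal-ext}.
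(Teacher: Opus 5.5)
Your proposal is correct and follows essentially the same route as the paper. You split off the orbit directions with a slice through $x$: the paper takes the explicit linear slice $(x+N)\cap\mathfrak Z_{\mathbf d}$, with $N$ a complement to $\operatorname{im}(a_x)$ in $\operatorname{Rep}_{\mathbf d}$, which is what Luna's theorem produces here. You then identify the slice germ with the miniversal deformation ring of $S_x$ through the common étale chart of Lemma~\ref{lem:etale-chart-simple-points}, and use formality to get $q_{S_x}^{-1}(0)$. Your Step~1 and the Morse-lemma fallback are unnecessary. The one detail worth adding is the rescaling $x\mapsto\lambda x$, which is a $G_{\mathbf d}$-equivariant automorphism of $\mathfrak Z_{\mathbf d}$ because $\mu_{\mathbf d}$ is homogeneous. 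The paper uses it to place $x$ inside the image of the étale chart before comparing germs.
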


\begin{proof}
After the nonzero rescaling used in
Lemma~\ref{lem:etale-chart-simple-points}, which does not change the completed local
ring, we may assume that $x$ lies in the common étale neighborhood.

Choose a complement
$\operatorname{Rep}_{\mathbf d}=\operatorname{im}(a_x)\oplus N$. Since $\Delta\mathbb G_m$ acts trivially on
$\operatorname{Rep}_{\mathbf d}$, the morphism
\[
G_{\mathbf d} \times^{\Delta\mathbb G_m}(x+N)\longrightarrow \operatorname{Rep}_{\mathbf d},
\qquad
[g,y]\longmapsto g\cdot y,
\]
is étale at $[1,x]$. Setting $Y=(x+N)\cap \mathfrak Z_{\mathbf d}$, we
therefore obtain
\[
\widehat{\mathcal O}_{\mathfrak Z_{\mathbf d},x}
\simeq
\widehat{\mathcal O}_{Y,x}
[[z_1,\ldots,z_{\dim G_{\mathbf d}-1}]].
\]

The common étale chart of Lemma~\ref{lem:etale-chart-simple-points} identifies the
completed germ of $[Y/\Delta\mathbb G_m]$ with the completed moduli-stack germ at
$[S_x]$. Since $\Delta\mathbb G_m$ acts trivially on $Y$, and scalar
automorphisms act trivially by conjugation on
$R\!\Hom_{\mathcal C}(S_x,S_x)$,
$\widehat{\mathcal O}_{Y,x}$ is the miniversal deformation ring of
$S_x$.

Now $S_x\simeq\tau S_x$, so
$\operatorname{Ext}^2_{\mathcal C}(S_x,S_x)\simeq\mathbb C$.
By \cite[Corollary~3.12]{ChenPertusiZhao2024}, the derived endomorphism algebra of
$S_x$ is formal. Hence
\[
\widehat{\mathcal O}_{Y,x}
\simeq
\mathbb C[[u_1,\ldots,u_h]]/(q_{S_x}).
\]
After a linear change of coordinates over $\mathbb C$, the quadratic
form $q_{S_x}$ becomes
$u_1^2+\cdots+u_\rho^2$, which proves the claim.
\end{proof}

\subsection{Rank of Kuranishi equation} The following lemma gives out a lower bout of the rank of Kuranishi map at $S$, which allow us to estimate the codimension of the singular locus of $\mathfrak Z_\mathbf d$ combining Lemma \ref{lem:local-fixed-simple}.

\begin{lemma}\label{lem:rank-estimate-fixed}
Let $S\in \mathcal{C}$ be a $\sigma$-stable object such that $S\simeq\tau S$. Assume
$[S]=mv_0$, $m\ge 1$, where $v_0\in \mathrm{K}_{\mathrm{num}}(\mathcal{C})$ is primitive and
$v_0^2\ge 5$. Then the scalar quadratic Kuranishi equation at $S$ has rank at
least $3$.
\end{lemma}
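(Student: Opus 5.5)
By Lemma~\ref{lem:rk-equal-ext}(4), the rank of $q_S$ equals $\ext^1_{\mathcal D}(F,\iota F)$, where $F=(S,\varphi)$ and $\iota F=(S,-\varphi)$ are the two non-isomorphic $\sigma_{\mathcal D}$-stable lifts of $S$ of the same phase. So the plan is to bound this Ext dimension from below using the Euler form of the CY2 cover. We will use the Mukai-type relation $\ext^1_{\mathcal D}(F,\iota F)=-\chi_{\mathcal D}(F,\iota F)$.

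First we justify that relation. Since $F$ and $\iota F$ are non-isomorphic stable objects of the same phase, $\Hom_{\mathcal D}(F,\iota F)=0$. By the CY2 property, $\Ext^2_{\mathcal D}(F,\iota F)\simeq\Hom_{\mathcal D}(\iota F,F)^\vee=0$, and there are no Ext groups outside degrees $0,1,2$. Hence $\ext^1_{\mathcal D}(F,\iota F)=-\chi_{\mathcal D}(F,\iota F)$, and it remains to compute this Euler pairing numerically.

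Next we compute the Euler pairing. Write $f=[F]$ and $f'=[\iota F]$ in $\mathrm K_{\mathrm{num}}(\mathcal D)$. By Lemma~\ref{lem:rk-equal-ext}(1), $f+f'=\operatorname{Inf}_*[S]=m\operatorname{Inf}_*v_0$. Lemma~\ref{lem:inflation-pairing} gives
\[
(f+f')^2:=-\chi(f+f',f+f')=2m^2v_0^2 .
\]
Since $\iota$ is an autoequivalence, $\chi(f,f)=\chi(f',f')$. Moreover, $\chi_{\mathcal D}$ is symmetric because $\mathcal D$ is CY2, so $\chi(f,f')=\chi(f',f)$. Expanding therefore gives
\[
-2\chi(f,f)-2\chi(f,f')=2m^2v_0^2,
\]
that is,
\[
\ext^1_{\mathcal D}(F,\iota F)=-\chi(f,f')=m^2v_0^2+\chi(f,f).
\]
For a stable object $F$ in the CY2 category, $\chi(f,f)=2-\ext^1_{\mathcal D}(F,F)\le 2$. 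This is an upper bound, which is the wrong direction, so we cannot stop here.

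The better approach is to pass back to $\mathcal C$ via $h=\ext^1_{\mathcal C}(S,S)$. Since $\Hom(S,S)=\mathbb C$ and $\Ext^2(S,S)=\mathbb C$ (because $S\simeq\tau S$), we get $h=m^2v_0^2+2$. By Lemma~\ref{lem:rk-equal-ext}(3),
\[
h=\ext^1_{\mathcal D}(F,F)+\ext^1_{\mathcal D}(F,\iota F).
\]
The two terms are $2-\chi(f,f)$ and $m^2v_0^2+\chi(f,f)$, which is consistent but does not yet bound the second term. The missing ingredient is a lower bound for $\chi(f,f)$, equivalently an upper bound for $\ext^1_{\mathcal D}(F,F)=f^2+2$.

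We write $f=\tfrac12(f+f')+\tfrac12(f-f')$. Since $\iota$ fixes $\Pi_{\mathcal C}$ pointwise, $\iota_*$ acts by $-1$ on $f-f'$, and $f-f'$ is orthogonal to $\Pi_{\mathcal C}$. Hence
\[
f^2=\tfrac14(f+f')^2+\tfrac14(f-f')^2=\tfrac12m^2v_0^2+\tfrac14(f-f')^2 ,
\]
and
\[
\ext^1_{\mathcal D}(F,\iota F)=-\chi(f,f')=\tfrac12m^2v_0^2-\tfrac14(f-f')^2 .
\]
The main obstacle is to control $(f-f')^2$, which is a class in $\Pi^\perp$ where the Mukai form is indefinite, so a sign argument alone will not suffice. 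We would try two inputs here. The first is that the anti-invariant class $f-f'$ should be negative-definite or constrained, since the $\iota$-anti-invariant part of $\mathrm K_{\mathrm{num}}(\mathcal D)$ for an Enriques-type cover behaves like the anti-invariant lattice of a K3 cover of an Enriques surface. The second is the stability inequality $f^2\ge -2$ applied to the stable objects $F$, $\iota F$, and to stable objects of class $f-f'$ if those exist. If $(f-f')^2\le 0$, we immediately get $\rk(q_S)\ge\tfrac12m^2v_0^2\ge 5/2$, hence $\rk(q_S)\ge 3$. Failing that, we would combine $\rk(q_S)=h-\ext^1_{\mathcal D}(F,F)$ with the parity $\rk(q_S)\equiv h=m^2v_0^2+2 \pmod 2$ coming from the form above. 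We would also use the fact that $\ext^1_{\mathcal D}(F,F)$ is the dimension of a smooth moduli space of $\iota$-stable objects on the cover, which is at most half of the expected dimension, to push the bound up to $3$.
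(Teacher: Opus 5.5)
\paragraph{There is a genuine gap.} Your reduction is the same as the paper's. With $\delta:=f-f'$ you correctly derive $\rk(q_S)=\ext^1_{\mathcal D}(F,\iota F)=\tfrac12 m^2v_0^2-\tfrac14\delta^2$, which is the paper's identity $4r=2v^2-\delta^2$, and you correctly show $\delta\perp\Pi_{\mathcal C}$.

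However, you never prove the one inequality the lemma rests on, $\delta^2\le 0$. You even assert it cannot come from a sign argument, because the Mukai form is indefinite on $\Pi^\perp$. That assertion is where the proposal fails. The form is indefinite on the orthogonal complement of $\Pi_{\mathcal C}$ inside the full Mukai lattice $\widetilde H(\mathcal D,\mathbb R)$. But $\delta$ is a numerical class, so it lies in the algebraic part $\widetilde H^{1,1}(\mathcal D,\mathbb R)$.

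Here $\mathcal D\simeq\Ku(X^{\mathrm{op}})$ is a K3 category, either a K3 surface or the Kuznetsov component of a GM fourfold. Its Mukai Hodge structure has signature $(4,20)$, and $(\widetilde H^{2,0}\oplus\widetilde H^{0,2})_{\mathbb R}$ is positive definite of rank $2$. Hence $\widetilde H^{1,1}(\mathcal D,\mathbb R)$ has signature $(2,20)$. The positive-definite plane $\Pi_{\mathcal C}$ of Lemma~\ref{lem:inflation-pairing} already uses up both positive directions there. So $\Pi_{\mathcal C}^{\perp}\cap\widetilde H^{1,1}(\mathcal D,\mathbb R)$ is negative definite, and $\delta^2\le 0$.

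This Hodge-index step is exactly what the paper supplies. With it, your conditional conclusion $\rk(q_S)\ge\tfrac12 m^2v_0^2\ge 5/2$, hence $\rk(q_S)\ge 3$, completes the proof.

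None of your fallback inputs can replace this step:
\begin{itemize}
\item The bound $f^2\ge -2$, and likewise $\delta^2\ge -2$ for hypothetical stable objects of class $\delta$, bounds $\delta^2$ from below. That gives an upper bound on $\rk(q_S)$, which is the wrong direction.
\item The parity $\rk(q_S)\equiv m^2v_0^2\pmod 2$ follows from evenness of $\ext^1_{\mathcal D}(F,F)=f^2+2$. It cannot by itself exclude rank $1$ or $2$.
\item The claim that $\ext^1_{\mathcal D}(F,F)$ is at most half an expected dimension has no justification in this setting.
\end{itemize}
Your Enriques heuristic points at the right fact only after restricting to algebraic classes. For the K3 cover of an Enriques surface, the full anti-invariant lattice has signature $(2,10)$ and is indefinite. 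It is its intersection with the N\'eron--Severi lattice that is negative definite, for the same signature reason as above.
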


\begin{proof}
Let $F,\iota F\in\mathcal D$ be the two equivariant lifts of $S$
from Lemma~4.9, and write
\[
v:=[S]=mv_0,\qquad \lambda:=[F],\qquad
r:=\operatorname{rk}(q_S).
\]
By Lemma~\ref{lem:rk-equal-ext}, $r=\operatorname{ext}^1_{\mathcal D}(F,\iota F)$.
Since $F$ and $\iota F$ are non-isomorphic stable objects of the
same phase, stability and the CY$2$ property give $
\Hom_{\mathcal D}(F,\iota F) = \operatorname{Ext}^2_{\mathcal D}(F,\iota F) =0.$
Consequently,
$r=-\chi_{\mathcal D}(F,\iota F)=(\lambda,\iota\lambda)$.

Set $\delta:=\lambda-\iota\lambda$. The adjunction between
$\operatorname{Inf}$ and $\operatorname{Forg}$, together with
$\operatorname{Inf}(S)\simeq F\oplus\iota F$, gives
$v^2=\lambda^2+r$. Since $\iota$ preserves the Mukai pairing,
$\delta^2=2\lambda^2-2r$. Eliminating $\lambda^2$, we obtain
\[
4r=2v^2-\delta^2.
\]

We claim that $\delta^2\leq 0$. By
\cite[Theorem~4.15(3)]{BP23}, the CY$2$ cover
$\mathcal D$ is equivalent to $\operatorname{Ku}(X^{\mathrm{op}})$,
where $X^{\mathrm{op}}$ is either a K3 surface or a GM fourfold.
In either case, $\mathcal D$ carries a Mukai Hodge structure
$\widetilde H(\mathcal D,\mathbb Z)$ whose underlying lattice has
signature $(4,20)$; see \cite[Examples~5.1--5.2]{BP23} and
\cite[\S3.1]{Per19}. Its real $(1,1)$-part therefore has signature
$(2,20)$.

Consider
$P:=\operatorname{Inf}_*
(\mathrm{K}_{\mathrm{num}}(\mathcal C)_{\mathbb R})$.
For $a,b\in \mathrm{K}_{\mathrm{num}}(\mathcal C)$, adjunction and
$\operatorname{Forg}\circ\operatorname{Inf}
\simeq\operatorname{id}\oplus\tau$ give
\[
(\operatorname{Inf}_*a,\operatorname{Inf}_*b)_{\mathcal D}
=2(a,b)_{\mathcal C},
\]
as in Lemma~\ref{lem:inflation-pairing}.
Thus $P$ is a positive-definite two-plane. The residual involution
$\iota$ fixes $P$, whereas
$\iota(\delta)=-\delta$. Since $\iota$ preserves the Mukai pairing,
we obtain $\delta\perp P$. Moreover, $\delta$ is an algebraic Mukai class, hence
$\delta\in\widetilde H^{1,1}(\mathcal D,\mathbb R)$. Since this
space has positive index $2$ and already contains the positive
two-plane $P$, its orthogonal complement $P^\perp$ is negative
definite. Therefore $\delta^2\leq 0$. Now it follows from $4r=2v^2-\delta^2$ that $r\geq v^2/2$.
Since $v_0^2 \ge 5$, we have  $r\geq 5/2$. As $r$ is an integer,
$\operatorname{rk}(q_S)=r\geq 3$.
\end{proof}

\begin{remark}
Notice $\mathrm{K}_{\mathrm{num}}(\mathcal C)\simeq\mathbb Z^2$ with quadratic form
$x^2+y^2$, and no primitive class has square $3$ or $4$.
Thus the assumption $v_0^2\geq 5$ equivalent to $v_0^2\geq 3$.
\end{remark}

\subsection{Normality of the Serre-twisted zero fibre}

The following proposition shows the normality of the local model.
\begin{proposition}\label{prop:zero-fibre-normal}
If $v_0^2 \ge 5$ where $v_0$ is primitive, then for every dimension vector $\mathbf d=(d_i)_{i\in I}$, the scheme
$\mathfrak Z_{\mathbf d}=\mu_{\mathbf d}^{-1}(0)$ is normal.
\end{proposition}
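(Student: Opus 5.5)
We follow the strategy announced in the introduction: show that $\mathfrak Z_{\mathbf d}$ is a complete intersection of the expected dimension $\operatorname{expdim}(\mathbf d)$, and that its singular locus has codimension at least two. Serre's criterion then gives normality, since a complete intersection is Cohen--Macaulay (hence $S_2$), and $R_1$ is what remains. Concretely, the goal is the estimate
\[
\dim\bigl(\mathfrak Z_{\mathbf d}\setminus \mathfrak Z_{\mathbf d}^{\mathrm{sm}}\bigr)\le \operatorname{expdim}(\mathbf d)-2,
\]
together with $\dim \mathfrak Z_{\mathbf d}\le\operatorname{expdim}(\mathbf d)$. Both statements will be proved simultaneously by induction on $|\mathbf d|=\sum_i d_i$ (equivalently on $\operatorname{wt}(\mathbf d)$).

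\emph{Step 1 (simple locus).} Let $x\in\mathfrak Z_{\mathbf d}$ be simple with associated stable object $S_x$ (Lemma~\ref{lem:etale-chart-simple-points}).
\begin{itemize}
\item If $S_x\not\simeq\tau S_x$, Lemma~\ref{lem:tangent-obstruction-simple-point} shows that $d\mu_{\mathbf d,x}$ is surjective. Hence $\mathfrak Z_{\mathbf d}$ is smooth of dimension $\operatorname{expdim}(\mathbf d)$ at $x$.
\item If $S_x\simeq\tau S_x$, Lemma~\ref{lem:local-fixed-simple} describes the completed local ring as a quadric cone of rank $\rho$ times a smooth factor. Lemma~\ref{lem:rank-estimate-fixed} gives $\rho\ge 3$, since $[S_x]=\operatorname{wt}(\mathbf d)v_0$ and $v_0^2\ge5$. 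The germ is therefore a hypersurface of dimension $h+\dim G_{\mathbf d}-2$. Comparing with the Euler characteristic computation in Lemma~\ref{lem:tangent-obstruction-simple-point}, this equals $\operatorname{expdim}(\mathbf d)$. A rank $\ge3$ quadric is normal, with singular locus of codimension $\rho-1\ge 2$ in the germ.
\end{itemize}
Thus on the simple locus, $\mathfrak Z_{\mathbf d}$ is a normal lci of the expected dimension, and its singularities there sit in codimension $\ge2$.

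\emph{Step 2 (non-simple locus).} A non-simple $x$ admits a proper subrepresentation $0\neq\mathbf d'\subsetneq\mathbf d$, so that $x$ lies in a block-triangular locus. Write $\mathbf d''=\mathbf d-\mathbf d'$. We stratify by the pair $(\mathbf d',\mathbf d'')$ and bound each stratum's dimension by
\[
\dim\Gr(\mathbf d',\mathbf d)+\dim\mathfrak Z_{\mathbf d'}+\dim\mathfrak Z_{\mathbf d''}+\dim(\text{off-diagonal block }\operatorname{Hom}(U'',U')\otimes A),
\]
with the equations on the off-diagonal block also taken into account. The induction hypothesis gives $\dim\mathfrak Z_{\mathbf d'}\le\operatorname{expdim}(\mathbf d')$ and likewise for $\mathbf d''$. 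One then computes $\operatorname{expdim}(\mathbf d)$ minus this bound using $a_{ij}=v_0^2m_im_j+\delta_{ij}+\delta_{j,\nu(i)}$. The cross term is dominated by $v_0^2\operatorname{wt}(\mathbf d')\operatorname{wt}(\mathbf d'')$, coming from the off-diagonal Ext contributions, minus the Grassmannian and cross-obstruction terms, which are at most $\sum d'_id''_i$-type quantities. With $v_0^2\ge5$ this gap should be at least $2$.

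This bound shows that the non-simple locus has codimension $\ge2$ in the expected dimension. Every irreducible component of $\mathfrak Z_{\mathbf d}$ has dimension $\ge\operatorname{expdim}(\mathbf d)$, since it is cut out by $\dim\operatorname{Obs}_{\mathbf d}$ equations. Such a component therefore meets the simple locus, so $\dim\mathfrak Z_{\mathbf d}=\operatorname{expdim}(\mathbf d)$ and $\mathfrak Z_{\mathbf d}$ is a complete intersection. Since $0\in\mathfrak Z_{\mathbf d}$ by homogeneity, the simple locus is nonempty whenever it is needed.

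\emph{Main obstacle.} I expect the main difficulty to be the dimension count in Step 2. One has to control the off-diagonal equations carefully, since $\mu_{\mathbf d}$ pairs $U_i$ with $U_{\nu(i)}$ and so mixes blocks through the involution $\nu$. One must also handle degenerate strata, for instance when $\mathbf d'$ is concentrated at indices with $\nu(i)=i$, where the extra $\delta_{j,\nu(i)}$ terms appear. My first attempt would be a crude bound that ignores the off-diagonal equations and uses only $v_0^2\operatorname{wt}(\mathbf d')\operatorname{wt}(\mathbf d'')\ge5\sum d'_i\sum d''_j$ against the Grassmannian dimension $\sum d_i'd_i''$ plus the cross obstruction $\sum d'_id''_{\nu(i)}+d''_id'_{\nu(i)}$. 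This leaves a margin of at least $2$ once $|\mathbf d'|,|\mathbf d''|\ge1$.
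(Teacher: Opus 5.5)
Your proposal is correct and follows essentially the same route as the paper: induction on $|\mathbf d|$ giving $\dim\mathfrak Z_{\mathbf d}\le\operatorname{expdim}(\mathbf d)$, a block-triangular count showing the non-simple locus has codimension at least $2$, the smooth versus rank-$\ge 3$ quadric dichotomy at simple points, and Serre's criterion applied to the resulting complete intersection. The differences are cosmetic. The paper computes the exact gap $v_0^2\operatorname{wt}(\boldsymbol\beta)\operatorname{wt}(\boldsymbol\gamma)-\sum_i\beta_i\gamma_{\nu(i)}$, because the $\delta$-terms in $a_{ij}$ cancel the Grassmannian term and one of the two cross-obstruction terms; your cruder bound discards those $\delta$-terms but still leaves a margin of at least $2|\mathbf d'|\,|\mathbf d''|\ge 2$. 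The paper also proves reducedness separately (generic reducedness plus Cohen--Macaulay), whereas you let $R_1$ and $S_2$ absorb it.
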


\begin{proof}
We prove the stronger statement that $\mathfrak Z_{\mathbf d}$ is a reduced local
complete intersection of dimension $\operatorname{expdim}(\mathbf d)$, regular in
codimension one. Normality then follows from Serre's criterion. We argue by induction on $|\mathbf d|$. The case
$\mathbf d=0$ is immediate, and when $|\mathbf d|=1$ every representation is simple.

Since $\mathfrak Z_{\mathbf d}\subset\operatorname{Rep}_{\mathbf d}$ is cut out by
$\dim\operatorname{Obs}_{\mathbf d}$ equations, every irreducible component of
$\mathfrak Z_{\mathbf d}$ has dimension at least $\operatorname{expdim}(\mathbf d)$.
We prove the reverse inequality by showing that every component meets the simple
locus. Let $0<\boldsymbol\beta<\mathbf d$ be a nonzero proper sub-dimension vector, and set
$\boldsymbol\gamma:=\mathbf d-\boldsymbol\beta$. Consider the locus of points of
$\mathfrak Z_{\mathbf d}$ preserving subspaces $U_i'\subset U_i$ with
$\dim U_i'=\beta_i$. For fixed $U_i'$, the block-upper-triangular part of
$\operatorname{Rep}_{\mathbf d}$ has dimension
\begin{equation*}
\dim\operatorname{Rep}_{\boldsymbol\beta}
+\dim\operatorname{Rep}_{\boldsymbol\gamma}
+\sum_{i,j}a_{ij}\gamma_i\beta_j.
\end{equation*}
Ignoring all off-diagonal equations only enlarges the locus. By induction on
$|\mathbf d|:=\sum_i d_i$, the zero loci for the subobject and quotient have
dimensions at most $\operatorname{expdim}(\boldsymbol\beta)$ and
$\operatorname{expdim}(\boldsymbol\gamma)$, respectively. Thus, for fixed $U_i'$, the
locus has dimension at most
\begin{equation*}
\operatorname{expdim}(\boldsymbol\beta)+\operatorname{expdim}(\boldsymbol\gamma)
+\sum_{i,j}a_{ij}\gamma_i\beta_j.
\end{equation*}
Varying the subspaces $U_i'\subset U_i$ adds $\sum_i\beta_i\gamma_i$ parameters.
Hence the locus preserving a sub-dimension vector $\boldsymbol\beta$ has dimension at
most
\begin{equation*}
\operatorname{expdim}(\boldsymbol\beta)+\operatorname{expdim}(\boldsymbol\gamma)
+\sum_{i,j}a_{ij}\gamma_i\beta_j+\sum_i\beta_i\gamma_i.
\end{equation*}
We compare this with $\operatorname{expdim}(\mathbf d)$. Using
$a_{ij}=v_0^2m_i m_j+\delta_{ij}+\delta_{j,\nu(i)}$, one computes
\begin{align*}
&\operatorname{expdim}(\mathbf d)-\left(
\operatorname{expdim}(\boldsymbol\beta)+\operatorname{expdim}(\boldsymbol\gamma)
+\sum_{i,j}a_{ij}\gamma_i\beta_j+\sum_i\beta_i\gamma_i\right)\\
&\hspace{2cm}=v_0^2\operatorname{wt}(\boldsymbol\beta)\operatorname{wt}(\boldsymbol\gamma)
-\sum_i\beta_i\gamma_{\nu(i)}.
\end{align*}
Since $m_i\ge 1$ and $m_{\nu(i)}=m_i$, we have
\begin{equation*}
\sum_i\beta_i\gamma_{\nu(i)}\le
\left(\sum_i\beta_i\right)\left(\sum_i\gamma_i\right)
\le \operatorname{wt}(\boldsymbol\beta)\operatorname{wt}(\boldsymbol\gamma).
\end{equation*}
Therefore the codimension gap is at least
\begin{equation*}
(v_0^2-1)\operatorname{wt}(\boldsymbol\beta)\operatorname{wt}(\boldsymbol\gamma)
\ge 2.
\end{equation*}
It follows that the non-simple locus has dimension at most
$\operatorname{expdim}(\mathbf d)-2$. Since every irreducible component of
$\mathfrak Z_{\mathbf d}$ has dimension at least $\operatorname{expdim}(\mathbf d)$,
no component is contained in the non-simple locus. Hence every irreducible component
meets the simple locus.

Let $x\in\mathfrak Z_{\mathbf d}$ be a simple point. By
Lemma~\ref{lem:etale-chart-simple-points}, after rescaling $x$ if necessary, $x$
corresponds to a $\sigma$-stable object $S_x$ of class
$\operatorname{wt}(\mathbf d)v_0$, and
\begin{equation*} \dim
\operatorname{coker}(d\mu_{\mathbf d,x})\simeq\operatorname{ext}^2_C(S_x,S_x)
\simeq\Hom_C(S_x,\tau S_x)^\vee.
\end{equation*}
If $S_x\not\simeq\tau S_x$, then 
$\Hom_C(S_x,\tau S_x)=0$. Thus $d\mu_{\mathbf d,x}$ is surjective,
and $\mathfrak Z_{\mathbf d}$ is smooth of dimension $\operatorname{expdim}(\mathbf d)$
at $x$.

Now suppose $S_x\simeq\tau S_x$. Then $\operatorname{Ext}^2_C(S_x,S_x)\simeq
\mathbb C$. By Lemma~\ref{lem:local-fixed-simple}, the completed local ring of
$\mathfrak Z_{\mathbf d}$ at $x$ is, up to the smooth orbit factor
$G_{\mathbf d}/\mathbb G_m$, the completed local ring of the hypersurface
$q_{S_x}=0$ inside $\operatorname{Ext}^1_C(S_x,S_x)$. By
Lemma~\ref{lem:rank-estimate-fixed}, the quadratic form $q_{S_x}$ has rank at least
$3$. After a formal linear change of coordinates, the hypersurface factor has the
form
\begin{equation*}
\mathbb C[[u_1,\ldots,u_h]]/(u_1^2+\cdots+u_\rho^2),
\qquad \rho\ge 3.
\end{equation*}
This hypersurface is reduced. Its singular locus is cut out by
$u_1=\cdots=u_\rho=0$, so its codimension inside the hypersurface is
$\rho-1\ge 2$. Hence $\mathfrak Z_{\mathbf d}$ is reduced and regular in
codimension one at such a simple point, and its local dimension is
$\operatorname{expdim}(\mathbf d)$.

We have shown that every irreducible component of $\mathfrak Z_{\mathbf d}$ contains
a simple point at which the local dimension is $\operatorname{expdim}(\mathbf d)$.
Therefore every
component has dimension exactly $\operatorname{expdim}(\mathbf d)$. Thus
$\mathfrak Z_{\mathbf d}\subset\operatorname{Rep}_{\mathbf d}$ has codimension
$\dim\operatorname{Obs}_{\mathbf d}$. As it is cut out by
$\dim\operatorname{Obs}_{\mathbf d}$ equations in the smooth affine space
$\operatorname{Rep}_{\mathbf d}$, those equations form a regular sequence. Hence
$\mathfrak Z_{\mathbf d}$ is a local complete intersection.

The simple locus is dense in every irreducible component and is reduced at every
simple point by the preceding analysis. Thus $\mathfrak Z_{\mathbf d}$ is generically
reduced. A local complete intersection is Cohen--Macaulay, hence has no embedded
associated points; therefore generic reducedness implies reducedness. Finally, $\mathfrak Z_{\mathbf d}$ is Cohen--Macaulay, so it satisfies Serre's
condition $S_2$. By previous calculation, its non-simple locus has codimension at least $2$, and the singular loci is
rank-$\rho$ quadratic hypersurfaces above which has codimension $\rho-1$ inside the hypersurface. Hence
$\mathfrak Z_{\mathbf d}$ satisfies $R_1$. This shows
$\mathfrak Z_{\mathbf d}$ is normal.
\end{proof}

\subsection{Normality of the Bridgeland moduli space} 

\begin{theorem}\label{thm:normality-main}
Let $X$ be a smooth Gushel--Mukai threefold and set $\mathcal{C}=\Ku(X)$. Let $\sigma$ be
a Serre-invariant stability condition on $C$. Let $v_0\in \mathrm{K}_{\mathrm{num}}(\mathcal{C})$ be
primitive with $v_0^2\ge 5$. Then, for every $m\ge 1$, the moduli space
$M_\sigma(\mathcal{C},mv_0)$ is normal.
\end{theorem}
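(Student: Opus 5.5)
The plan is to reduce normality of $M_\sigma(\mathcal C,mv_0)$ to the local statement already proved in Proposition~\ref{prop:zero-fibre-normal}, using the analytic local model of Lemma~\ref{lem:serre-twisted-model}. Normality is a local property, and for an excellent scheme of finite type over $\mathbb C$ it can be checked on the analytic germs at closed points: the completed local ring $\widehat{\mathcal O}_{M,[E]}$ coincides with the completion of the analytic local ring. Moreover, a local ring of an excellent scheme is normal if and only if its completion is normal. So it suffices to show that, for every closed point $[E]\in M_\sigma(\mathcal C,mv_0)$, the analytic germ at $[E]$ is normal.

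First, fix a closed point and represent it by a polystable object $E\simeq\bigoplus_i E_i\otimes V_i$. Enlarge the index set by $\tau$-translates as in the setup preceding Lemma~\ref{lem:serre-twisted-model}, and let $\mathbf d=(\dim V_i)$. By Remark~\ref{rmk:injective-Z} each $[E_i]=m_iv_0$, so all the hypotheses used in the construction of $\mu_{\mathbf d}$ hold. Lemma~\ref{lem:serre-twisted-model} then gives an isomorphism of analytic germs
\[
\bigl(M_\sigma(\mathcal C,mv_0),[E]\bigr)\simeq\bigl(\mathfrak Z_{\mathbf d}/\!\!/G_{\mathbf d},0\bigr).
\]

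Second, Proposition~\ref{prop:zero-fibre-normal} shows that the affine scheme $\mathfrak Z_{\mathbf d}$ is normal, since $v_0^2\ge5$. Because $G_{\mathbf d}$ is reductive, the affine GIT quotient $\mathfrak Z_{\mathbf d}/\!\!/G_{\mathbf d}=\operatorname{Spec}\mathbb C[\mathfrak Z_{\mathbf d}]^{G_{\mathbf d}}$ is normal. This is the standard argument: the invariant ring of a normal domain (or of a finite product of normal domains, one for each connected component) is integrally closed in its fraction field. Concretely, an element $f$ of the fraction field of the invariants that is integral over $\mathbb C[\mathfrak Z]^{G}$ lies in $\mathbb C[\mathfrak Z]$ by normality. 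It is then $G$-invariant, because it is a ratio of invariants. If $\mathfrak Z_{\mathbf d}$ is disconnected, I would treat each connected component separately; $G_{\mathbf d}$ is connected, so it preserves each of them. Finally, normality of the algebraic quotient at $0$ passes to the analytic germ, again by excellence: the analytic local ring of a normal algebraic variety is normal. Combining this with the first step gives normality at $[E]$.

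The main obstacle is the comparison between the algebraic and analytic statements. Lemma~\ref{lem:serre-twisted-model} is only an isomorphism of analytic germs, so I need that normality of the analytic local ring $\mathcal O^{an}_{M,[E]}$ implies normality of $\mathcal O_{M,[E]}$. I would handle this through completions: both rings have the same completion, and a Noetherian local ring whose completion is normal is itself normal (faithful flatness plus excellence). Alternatively, one could avoid analytic spaces entirely. The common étale neighbourhood in Lemma~\ref{lem:etale-chart-simple-points} is cartesian over étale maps of good moduli spaces, so $M_\sigma$ near $[E]$ is étale-locally isomorphic to $\mathfrak Z_{\mathbf d}/\!\!/G_{\mathbf d}$ near $0$, and normality descends along étale maps.
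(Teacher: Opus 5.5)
Your proposal is correct and follows the paper's proof essentially step for step: the local model from Lemma~\ref{lem:serre-twisted-model}, normality of $\mathfrak Z_{\mathbf d}$ from Proposition~\ref{prop:zero-fibre-normal}, normality of the reductive invariant ring, and transfer back to $M_\sigma(\mathcal C,mv_0)$. Your completion and faithful-flatness argument, or alternatively the \'etale chart of Lemma~\ref{lem:etale-chart-simple-points}, makes explicit the analytic-to-algebraic step that the paper compresses into ``normality is local in the analytic topology.''
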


\begin{proof}
Let $[E]\in M_\sigma(\mathcal{C},mv_0)$ be a closed point, represented by a
$\sigma$-polystable object $E\simeq\bigoplus_{i\in I}E_i\otimes V_i$, where the
$E_i$ are pairwise non-isomorphic stable objects of the same phase. By
Remark~\ref{rmk:injective-Z}, every stable factor satisfies $[E_i]=m_i v_0$
for some $m_i>0$.

By Lemma~\ref{lem:serre-twisted-model}, there is an analytic local isomorphism
\begin{equation*}
\bigl(M_\sigma(\mathcal{C},mv_0),[E]\bigr)\simeq
\bigl(\mathfrak Z_{\mathbf n}//G_{\mathbf n},0\bigr),
\end{equation*}
where $\mathbf n=(\dim V_i)_{i\in I}$. By
Proposition~\ref{prop:zero-fibre-normal}, the affine scheme $\mathfrak Z_{\mathbf n}$
is normal. Since $G_{\mathbf n}$ is reductive over $\mathbb{C}$, the invariant ring
$\mathbb{C}[\mathfrak Z_{\mathbf n}]^{G_{\mathbf n}}$ is normal. Therefore the good
quotient $\mathfrak Z_{\mathbf n}//G_{\mathbf n}$ is normal. Hence the analytic germ
of $M_\sigma(\mathcal{C},mv_0)$ at $[E]$ is normal. Normality is local in the analytic
topology, so $M_\sigma(\mathcal{C},mv_0)$ is normal.
\end{proof}

\section{Irreducibility of singular normal moduli spaces}

Let us first introduce a lemma which introduces the criterion of existence of singularities. 
\begin{lemma}\label{lem:tau–fixed–implies–singular}
Let $X$ be a Gushel–Mukai threefold, $\Ku(X)$ its Kuznetsov component with involution $\tau$, and
\[
M \;=\; M_{\sigma}\bigl(\mathcal{C},v_0\bigr)
\]
the Bridgeland moduli space of $\sigma$–stable objects of a primitive class $v_0$.  Suppose $E\in \mathcal{C}$ is $\sigma$–stable, then
$\tau(E)\cong E$ if and only if the point $[E]\in M$ is a singular point of $M$.
\end{lemma}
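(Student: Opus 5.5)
Since $v_0$ is primitive, every $\sigma$-semistable object of class $v_0$ is stable: a Jordan--H\"older factor would have class $m'v_0$ with $0<m'<1$ by Remark~\ref{rmk:injective-Z}. Hence $M$ is a coarse space of stable objects. At $[E]$ the stabilizer is $\mathbb G_m$, which acts trivially on $\operatorname{Ext}^1_{\mathcal C}(E,E)$. Theorem~\ref{thm:CPZ-kuranishi} (equivalently Lemma~\ref{lem:serre-twisted-model} with $\mathbf d$ a single $1$) then identifies the analytic germ of $M$ at $[E]$ with the germ at $0$ of
\[
\kappa_{2,E}^{-1}(0)\subset \operatorname{Ext}^1_{\mathcal C}(E,E),
\]
where $\kappa_{2,E}$ is the Yoneda square. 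The whole question thus reduces to whether this quadratic cone is smooth at the origin.

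\emph{Case $\tau E\not\simeq E$.} Serre duality gives $\operatorname{Ext}^2_{\mathcal C}(E,E)\simeq\Hom_{\mathcal C}(E,\tau E)^\vee$. Both $E$ and $\tau E$ are stable of the same phase and non-isomorphic, so this vanishes. Then $\kappa_{2,E}\equiv0$, the germ is the full vector space $\operatorname{Ext}^1$, and $[E]$ is a smooth point.

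\emph{Case $\tau E\simeq E$.} Now $\operatorname{Ext}^2\simeq\mathbb C$, and the germ is the hypersurface $\{q_E=0\}$ for the scalar quadratic form $q_E$. This hypersurface is singular at $0$ exactly when $q_E\neq0$, since a nonzero quadratic form has vanishing differential at the origin and hence cannot cut out a smooth hypersurface there. If instead $q_E\equiv0$, the germ is smooth but of dimension $\operatorname{ext}^1=v_0^2+2$, one more than at non-fixed points. So I need $\rk(q_E)\geq1$, which comes from Lemma~\ref{lem:rk-equal-ext}(4): $\rk(q_E)=\ext^1_{\mathcal D}(F,\iota F)$. Following the proof of Lemma~\ref{lem:rank-estimate-fixed}, this equals $(\lambda,\iota\lambda)$ with $4r=2v_0^2-\delta^2$ and $\delta^2\le0$. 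Hence $r\ge v_0^2/2>0$, because $v_0^2\geq1$ for every nonzero class in the positive definite lattice. Since $r$ is an integer, $r\ge1$ and $[E]$ is singular. This argument does not need $v_0^2\ge5$.

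The main obstacle is justifying the signature argument $\delta^2\le0$ for an arbitrary primitive class, including $v_0^2=1$. I will reuse verbatim the Mukai Hodge structure on $\mathcal D$ and the positive plane $P=\operatorname{Inf}_*(\mathrm K_{\mathrm{num}}(\mathcal C)_{\mathbb R})$, as in that proof. A secondary point is that the analytic germ identification really detects smoothness of $M$ as a scheme. This is fine because $M$ and the germ are isomorphic as complex analytic spaces, and smoothness is an analytic-local property.
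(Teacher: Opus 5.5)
Your argument is correct, but it takes a different route from the paper's.

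\textbf{The paper's route.} The paper does not use the local model for this lemma. It cites \cite[Theorem~1.8(1)]{PPZ23} for the implication that $\tau E\not\simeq E$ makes $[E]$ a smooth point. For the converse it makes a dimension count. From $\ext^2(E,E)=1$ it gets $\ext^1(E,E)=v_0^2+2$, so if $[E]$ were smooth the local dimension there would exceed the dimension $v_0^2+1$ of $M$. That dimension comes from generic smoothness of $M$ and \cite[Proposition~5.1]{FGLZ25}.

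\textbf{Your route.} You derive both directions from Theorem~\ref{thm:CPZ-kuranishi}. Vanishing of $\operatorname{Ext}^2$ gives smoothness. In the $\tau$-fixed case you show the quadric $q_E$ is nonzero by combining Lemma~\ref{lem:rk-equal-ext}(4) with the signature argument of Lemma~\ref{lem:rank-estimate-fixed}. You correctly note that $\delta^2\le 0$, and hence $r\ge v_0^2/2$, hold with no lower bound on $v_0^2$. So the hypothesis $v_0^2\ge 5$ plays no role, and $r\ge 1$ follows for every nonzero class.

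\textbf{What each buys.} The paper's argument is shorter. However, it tacitly needs to know that no component of $M$ through $[E]$ has dimension $v_0^2+2$. That is exactly the situation $q_E\equiv 0$, which your rank bound rules out directly. Your version is therefore self-contained relative to Section~4 and avoids the external purity input. It also gives more information: the germ at a $\tau$-fixed point is a quadric cone of rank at least $\lceil v_0^2/2\rceil$ times a smooth factor. The price is heavier Hodge-theoretic input on the Calabi--Yau cover, namely that the real algebraic part of $\widetilde H(\mathcal D)$ has signature $(2,20)$. This input is already established in the paper before this lemma, with no circularity.
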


\begin{proof}
One direction is shown in \cite[Theorem 1.8(1)]{PPZ23}, so we are left to show another.  First, as $M$ is reduced, it is generic smooth and therefore there exists at least one smooth point. Suppose $E$ is fixed by $\tau$. Notice that by Serre duality one has $\ext^0(E,E) = \ext^2(E,E) = 1 $. However, assuming $E$ is a smooth point, the expected dimension of $M$ is (\cite[proposition 5.1]{FGLZ25})
\[1 - \chi(v,v) = 1 - (2 - \ext^1(E,E)) =  \ext^1(E,E) - 1,\]
which is smaller then the actual dimension of $M$. Hence $[E]$ has to be singular.
\end{proof}

 The goal of the following theorem is to show the connectedness of singular normal moduli spaces of the category $\mathcal{C}$. To do so, we want to modify the classic proof in \cite[Section~4]{kaledin06Sin}.

\begin{theorem}
\label{thm:singular-connectedness}
Let $X$ be a smooth Gushel--Mukai threefold, let
$\mathcal C=\operatorname{Ku}(X)$, and let $\sigma$ be a
Serre-invariant stability condition on $\mathcal C$. Let
$v\in K_{\mathrm{num}}(\mathcal C)$ be primitive with $v^2$ odd,
and assume that
\[
M:=M_\sigma(\mathcal C,v)
\]
is normal. If $M$ contains a singular point, then $M$ is connected
and hence irreducible.
\end{theorem}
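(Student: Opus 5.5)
We argue by contradiction, following \cite[Section~4]{kaledin06Sin}. First we fix the setup. Since $v$ is primitive and $Z$ is injective on $\mathrm{K}_{\mathrm{num}}(\mathcal C)$ (Remark~\ref{rmk:injective-Z}), every $\sigma$-semistable object of class $v$ is stable. Hence $M$ is the good moduli space of a $\mathbb G_m$-gerbe of stable objects, and it carries a (possibly twisted) quasi-universal family $\mathcal E$. By Lemma~\ref{lem:tau–fixed–implies–singular}, the singular points of $M$ are exactly the $\tau$-fixed stable objects. By Lemmas~\ref{lem:local-fixed-simple} and \ref{lem:rank-estimate-fixed}, near such a point $p$ the germ of $M$ is a quadratic cone $\{u_1^2+\dots+u_\rho^2=0\}$, with $\rho\ge 3$, times a smooth factor. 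At a non-fixed point one has $\operatorname{ext}^1=v^2+1$, so $d:=\dim M=v^2+1$, which is even because $v^2$ is odd.

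Now suppose $M$ is disconnected, and let $Y$ be the connected component containing the singular point $p$. Because $M$ is normal, $Y$ is irreducible. Let $\pi\colon\widetilde Y=\operatorname{Bl}_p(Y)\to Y$ be the blow-up with exceptional divisor $D$. From the local model, $D$ is a quadric of rank $\rho$ (a cone over a smooth quadric) in $\mathbb P(\operatorname{Ext}^1(E_p,E_p))$. We then consider the relative complex
\[K:=\mathbf R\mathcal Hom_{\pi}(\mathcal E,\tau\mathcal E)\]
on $Y$. On $Y\setminus\{\text{fixed points}\}$ we have $\Hom(E,\tau E)=\operatorname{Ext}^2(E,\tau E)=0$, so $K[1]$ is a vector bundle of rank $-\chi(v,v)=v^2=d-1$ there. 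Any twist of the quasi-universal family cancels in $K$, because $\mathcal E$ and $\tau\mathcal E$ carry the same twist.

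Since the fixed locus may be more than one point, we first work over a neighbourhood in which $p$ is the only fixed point, or alternatively blow up along the whole fixed locus and localize the Chern-class computation at the component through $p$. Pull $K$ back to $\widetilde Y$. Along $D$ the cohomology sheaves $\mathcal H^0$ and $\mathcal H^2$ of $\pi^*K$ are supported on $D$, each of rank one there: the lines are $\Hom(E_p,\tau E_p)$ and its Serre dual. Following KLS, we perform two elementary modifications along $D$. These kill these torsion pieces and produce a genuine vector bundle $W$ of rank $d-1$ on $\widetilde Y$, with
\[c(W)=\pi^*c(K[1])\cdot\frac{1+a D}{1+b D}\]
for explicit signs and multiplicities $a,b$. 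These are computed from the normal bundle $\mathcal O_D(D)$ and the tautological identification of $\mathcal H^2$ with $\mathcal O_D(-D)$ twisted by the dual line.

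Taking the degree-$d$ part, the rank forces $c_d(W)=0$. The terms of $\pi^*c(K[1])$ of positive degree pair to zero against powers of $D$ supported over the point $p$. Since $d$ is even, the remaining contributions $(\pm D)^d$ add up rather than cancel, and we obtain $D^d=0$. On the other hand, $D^d=(-1)^{d-1}\deg D=(-1)^{d-1}\operatorname{mult}_p(Y)=2(-1)^{d-1}\neq0$. This contradiction shows that $M$ is connected. Finally, a connected normal Noetherian scheme is irreducible, since its local rings are domains and so distinct components cannot meet.

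The main obstacle is the Chern-class identity itself. One must pin down exactly the classes of $\mathcal H^0$ and $\mathcal H^2$ of $\pi^*K$ along $D$, and their dependence on the linearization $\varphi$ via Lemma~\ref{lem:rk-equal-ext}, so that the modified sheaf $W$ is locally free of rank $d-1$. One must also justify why the lower-degree classes pulled back from $Y$ do not contribute to the top-degree part. I expect the disconnectedness hypothesis to enter precisely here, as in KLS: restricting to the component $Y$ lets us choose a normalization of $\mathcal E$ and a global comparison of $K$ with its dual (via Serre duality $K^\vee\simeq K[2]$) that may not exist on all of $M$. I would first try to verify the identity in the local quadric-cone model and then globalize.
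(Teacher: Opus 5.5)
\textbf{There is a genuine gap, and it sits in your choice of relative complex.}

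Since $S_{\mathcal C}\simeq\tau[2]$, Serre duality gives $\Ext^i_{\mathcal C}(E,\tau E)\simeq\Ext^{2-i}_{\mathcal C}(E,E)^\vee$. In particular $\Ext^2_{\mathcal C}(E,\tau E)\simeq\Hom_{\mathcal C}(E,E)^\vee\simeq\mathbb C$ at \emph{every} point of $Y$, and $\ext^1_{\mathcal C}(E,\tau E)=v^2+1=d$ off the fixed locus. So $K=\mathbf R\mathcal Hom_\pi(\mathcal E,\tau\mathcal E)$ is just the dual of the relative self-Ext (tangent--obstruction) complex. Consequently:
\begin{itemize}
\item $K[1]$ is not a rank-$(d-1)$ bundle away from the fixed points;
\item $\mathcal H^2(K)$ is a line bundle on all of $Y$, not a torsion sheaf along $D$;
\item the rank count meant to force $c_d(W)=0$ fails.
\end{itemize}

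More fundamentally, even with correct ranks, an identity $c(W)=\pi^*c(K[1])\cdot(\text{correction in }D)$ only yields $\pi^*c_d(K[1])+(\text{const})\,D^d=0$. Your remark handles the mixed terms $\pi^*c_j\cdot D^k$ with $j,k>0$. Nothing kills the pure term $\pi^*c_d(K[1])$, whose degree is $\int_Y c_d(K[1])$ and has no reason to vanish. A sanity check confirms the sketch cannot work as stated: it never uses $M\neq Y$. It would therefore exclude $\tau$-fixed stable points altogether, contradicting Proposition~\ref{existence_singular}.

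\textbf{The missing idea is Mukai's comparison object.} This, not a normalization of $\mathcal E$ or a self-duality $K^\vee\simeq K[2]$, is where disconnectedness enters.

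The paper uses the off-diagonal complex $\mathcal K_F=\mathbf R\pi_{Y*}\mathbf R\mathcal Hom(\pi_X^*F,\mathcal E)$ with $F=E_p$ fixed. Because $\tau F\simeq F$, both $\Hom(F,\mathcal E_y)$ and $\Ext^2(F,\mathcal E_y)\simeq\Hom(\mathcal E_y,\tau F)^\vee$ vanish for $y\neq p$. So the degeneracy locus is exactly the reduced point $p$ (Lemma~\ref{lem:reduced-degeneracy}), however large the fixed locus is. This also removes your difficulty with other fixed points.

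Since $\tau$ fixes $p$, it preserves $Y$. Hence any $G\in M\setminus Y$ also has $\tau G\notin Y$, and $\mathcal K_G\simeq\mathcal W[-1]$ with $\mathcal W$ an honest rank-$(d-1)$ bundle on all of $Y$. Grothendieck--Riemann--Roch and $[F]=[G]$ give $c(-\mathcal K_F)=c(\mathcal W)$, so the pure term becomes $\varphi^*c_d(\mathcal W)=0$.

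The two elementary modifications at $p$ then produce a rank-$(d-1)$ bundle $\mathcal W'$ on $\operatorname{Bl}_p(Y)$ with $c(\mathcal W')=\varphi^*c(\mathcal W)\,(1-D^2)^{-1}$. Since $d$ is even, the degree-$d$ part of $(1-D^2)^{-1}$ is $D^d$. Comparing top Chern classes yields $D^d=0$, contradicting $\deg D^d=(-1)^{d-1}\operatorname{mult}_p(Y)$.

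Note also that with an off-diagonal complex the Brauer twist no longer cancels, so a genuine universal family is needed. The paper obtains one from unimodularity of $\mathrm K_{\mathrm{num}}(\mathcal C)$: it picks $w$ with $\chi(w,v)=1$ and applies the argument of \cite{LLPZ24}.
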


We prepare the relative complex used in the proof. Let $Y\subset M$ be a connected component containing a
singular point $p=[F]$. By Lemma \ref{lem:tau–fixed–implies–singular}, one has $F\simeq\tau F$.

Since the Euler lattice $K_{\mathrm{num}}(\mathcal C)$ is unimodular
and $v$ is primitive, there exists
$w\in K_{\mathrm{num}}(\mathcal C)$ such that $\chi(w,v)=1$.
The Brauer-class argument in
\cite[proof of Corollary~3.9]{LLPZ24} shows that $M$ is a fine
moduli space. Indeed, the universal twisted object on the moduli stack
produces a twisted perfect complex of rank $\chi(w,v)=1$, forcing
the Brauer class of the moduli gerbe to vanish. Thus there is a
universal perfect complex
$\mathcal E\in D_{\mathrm{perf}}(Y\times X)$.

Let
$\pi_Y\colon Y\times X\to Y$ and
$\pi_X\colon Y\times X\to X$ be the projections, and set
\[
\mathcal K_F:=
\mathbf R\pi_{Y*}\mathbf R\mathcal Hom
\bigl(\pi_X^*F,\mathcal E\bigr).
\]
For every $y\in Y$, the fiber of $\mathcal K_F$ computes
$\operatorname{Ext}^\bullet_{\mathcal C}(F,\mathcal E_y)$, and notice these groups vanish outside degrees $0,1,2$. On a neighborhood of $p$, choose a three-term locally
free presentation, compatible with base change,
\begin{equation}
\label{eq:relative-ext-complex}
\mathcal K_F\simeq
\bigl[
A^0\xrightarrow{\alpha}A^1\xrightarrow{\beta}A^2
\bigr].
\end{equation}
Now we need one extra lemma to describe the degeneracy loci.

\begin{lemma}
\label{lem:reduced-degeneracy}
In the notation above, the degeneracy loci of $\alpha$ and $\beta$
are both the reduced point $\{p\}$.
\end{lemma}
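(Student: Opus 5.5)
The plan is to determine the degeneracy loci set-theoretically first, and then prove reducedness by a first-order computation modeled on \cite[Section~4]{kaledin06Sin}.

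\emph{Set-theoretic support.} For $y=[\mathcal E_y]\in Y$, base change identifies $\ker(\alpha_y)=\Hom_{\mathcal C}(F,\mathcal E_y)$ and $\operatorname{coker}(\beta_y)=\Ext^2_{\mathcal C}(F,\mathcal E_y)\simeq\Hom_{\mathcal C}(\mathcal E_y,\tau F)^\vee$. Because $v$ is primitive, every point of $Y$ is a stable object of class $v$ with the same phase as $F$. Hence $\Hom(F,\mathcal E_y)\neq0$ exactly when $\mathcal E_y\simeq F$, that is, $y=p$. Likewise $\Hom(\mathcal E_y,\tau F)\neq0$ exactly when $\mathcal E_y\simeq\tau F\simeq F$, again $y=p$. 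At $p$ both spaces are one-dimensional. So $\alpha$ drops rank by one (it is injective away from $p$) and $\beta$ fails to be surjective by one, and in both cases only at $p$. The degeneracy loci, defined by the appropriate Fitting ideals (maximal minors of $\alpha$, and minors of $\beta$ of size $\operatorname{rk}A^2$), are therefore supported at $\{p\}$.

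\emph{Scheme structure.} It remains to show that these ideals contain $\mathfrak m_p$. I would argue locally. After shrinking the presentation \eqref{eq:relative-ext-complex}, we may assume it is minimal at $p$, meaning all differentials vanish at $p$; then $A^0$ and $A^2$ have rank one. In this situation the maximal-minor ideal of $\alpha$ is generated by the entries of $\alpha$, and similarly for $\beta$. So the claim becomes: the entries of $\alpha$ (respectively $\beta$) generate $\mathfrak m_p$, or equivalently their differentials at $p$ span $T_p^*Y$. The first-order part of $\alpha$ at $p$ is the map
\[T_pY\simeq\Ext^1(F,F)\to\Hom\bigl(\Hom(F,F),\Ext^1(F,F)\bigr),\qquad\xi\mapsto(\mathrm{id}\mapsto\xi),\]
which is the Kodaira--Spencer identification and is therefore injective. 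Dually, the first-order part of $\beta$ is the map $\Ext^1(F,F)\to\Hom(\Ext^1(F,F),\Ext^2(F,F))$ given by $\xi\mapsto(\eta\mapsto\eta\circ\xi)$. Serre duality $\Ext^1(F,F)\times\Ext^1(F,\tau F)\to\mathbb C$, combined with $\tau F\simeq F$, makes this pairing perfect, so the map is injective as well. Injectivity on the Zariski tangent space shows that the linear terms of the entries cut out $p$ scheme-theoretically, which gives reducedness.

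\emph{Main obstacle.} The delicate point is that $T_pY$ here is the Zariski tangent space of a singular point. One must check that the identification of the linear part of $\alpha$ and $\beta$ with Yoneda multiplication holds for the full $\mathfrak m_p/\mathfrak m_p^2$ and not merely on the smooth locus. I would settle this by restricting the universal family to first-order thickenings $\operatorname{Spec}\mathbb C[\epsilon]\to Y$ at $p$. Each such thickening is a deformation of $F$ with class $\xi\in\Ext^1(F,F)$, and the derivative of the connecting maps of $\mathcal K_F$ along $\epsilon$ is then the cup product with $\xi$. This reduces both claims to standard base-change properties of $\mathbf R\pi_{Y*}$ together with the nondegeneracy of the Serre pairing.
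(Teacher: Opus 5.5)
Your proposal is correct and follows essentially the same route as the paper. Support comes from stability and Serre duality, and reducedness comes from restricting to first-order deformations over $\mathbb C[\epsilon]/(\epsilon^2)$: there the connecting maps are cup product with the deformation class, $\Hom(F,F)\to\Ext^1(F,F)$ is injective, and the Serre pairing on $\Ext^1(F,F)$ is perfect because $F\simeq\tau F$. The only difference is presentational. You pass to a minimal presentation, so the minor ideals are generated by the entries, and you check that their differentials span $\mathfrak m_p/\mathfrak m_p^2$. The paper phrases the same computation as the relative Ext groups over the dual numbers having length one, so that both degeneracy loci have zero Zariski tangent space at $p$.
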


\begin{proof}
Let $y\in Y\setminus\{p\}$. The objects $F$ and
$\mathcal E_y$ are non-isomorphic stable objects of the same class
and phase, so
$\operatorname{Hom}_{\mathcal C}(F,\mathcal E_y)=0$. On the other hand $\operatorname{Ext}^2_{\mathcal C}(F,\mathcal E_y)
\simeq
\operatorname{Hom}_{\mathcal C}(\mathcal E_y,\tau F)^\vee =0.$ Thus $\alpha$ is injective and $\beta$ is surjective away from
$p$. Also, $\alpha$ and $\beta$ each drop rank by exactly one at $p$. This proves the set-theoretic assertion. Moreover, one has $\rk\alpha(p)=\rk A^0-1$ and $\rk\beta(p)=\rk A^2-1.$

It remains to prove that the two degeneracy loci are reduced. Let
$0\ne\gamma\in\operatorname{Ext}^1_{\mathcal C}(F,F)$, and let
$F_\gamma$ be the corresponding first-order deformation over
$R:=\mathbb C[\epsilon]/(\epsilon^2)$. It fits into the usual
extension
\begin{equation}
\label{eq:first-order-extension}
0\rightarrow F
\xrightarrow{\epsilon}F_\gamma
\rightarrow F\rightarrow0.
\end{equation}
The induced long exact sequence in Ext has boundary maps given, up to
the usual sign convention, by
$\gamma\cup-$. The map $\gamma\cup-\colon
\operatorname{Hom}_{\mathcal C}(F,F)
\longrightarrow
\operatorname{Ext}^1_{\mathcal C}(F,F)$
is clearly injective, since it sends $id_F$ to $\gamma$. We claim that
\[
\gamma\cup-\colon
\operatorname{Ext}^1_{\mathcal C}(F,F)
\longrightarrow
\operatorname{Ext}^2_{\mathcal C}(F,F)
\]
is surjective. Choose an isomorphism
$\theta\colon F\xrightarrow{\sim}\tau F$. Under this identification,
Serre duality gives a perfect bilinear form
\[
B_\theta(\gamma,\eta)
:=
\operatorname{tr}_F
\bigl(\theta[2]\circ(\gamma\cup\eta)\bigr)
\]
on $\operatorname{Ext}^1_{\mathcal C}(F,F)$. Since $\gamma\ne0$,
there exists $\eta$ such that $B_\theta(\gamma,\eta)\ne0$.
Therefore $\gamma\cup\eta\ne0$. As
$\operatorname{Ext}^2_{\mathcal C}(F,F)\simeq\mathbb C$, the map
$\gamma\cup-$ is surjective.

It follows from the long exact sequence associated with
\eqref{eq:first-order-extension} that
\[
\operatorname{Ext}^0_R
(F\boxtimes R,F_\gamma)
\simeq\operatorname{Hom}_{\mathcal C}(F,F)
\simeq\mathbb C
\]
and
\[
\operatorname{Ext}^2_R
(F\boxtimes R,F_\gamma)
\simeq\operatorname{Ext}^2_{\mathcal C}(F,F)
\simeq\mathbb C.
\]
Thus no nonzero tangent vector at $p$ lies in the tangent space of
either degeneracy locus, otherwise the relative Ext
group would have length greater than one. Hence both degeneracy loci
have zero Zariski tangent space at $p$. Since they are supported at
the single point $p$, Nakayama's lemma implies that they are the
reduced point $\{p\}$.
\end{proof}

\begin{proof}[Proof of Theorem~\ref{thm:singular-connectedness}]
Assume, for contradiction, that $M$ is disconnected, and retain the
component $Y$ and singular point $p=[F]$ fixed above.

We first record the dimension. Since $M$ is normal, every
irreducible component is generically smooth. At a smooth point
$[E]$, Lemma \ref{lem:tau–fixed–implies–singular} gives $E\not\simeq\tau E$, and hence
$\operatorname{Ext}^2_{\mathcal C}(E,E)=0$. Therefore every
component has dimension
\[
d:= \operatorname{ext}^1_{\mathcal C}(E,E) = v^2+1.
\]
In particular, $d$ is even because $v^2$ is odd. 
Since $F\simeq\tau F$, the automorphism of $M$ induced by $\tau$
fixes $p$, and therefore preserves $Y$. Choose a stable object
$G$ represented by a point of $M\setminus Y$. Then $\tau G$
also lies outside $Y$. Hence, for every $y\in Y$, we have
$\operatorname{Hom}_{\mathcal C}(G,\mathcal E_y)=0$ and $
\operatorname{Ext}^2_{\mathcal C}(G,\mathcal E_y)
\simeq
\operatorname{Hom}_{\mathcal C}(\mathcal E_y,\tau G)^\vee=0.$ Consequently, if
\[
\mathcal K_G:=
\mathbf R\pi_{Y*}\mathbf R\mathcal Hom
\bigl(\pi_X^*G,\mathcal E\bigr),
\]
then $\mathcal K_G\simeq\mathcal W[-1]$ for a vector bundle
$\mathcal W$ on $Y$ of rank $\operatorname{rk}\mathcal W
=-\chi(G,\mathcal E_y)=v^2=d-1.$ For $y\ne p$, the similar argument gives
$\mathcal K_F|_{Y\setminus\{p\}}\simeq\mathcal V[-1]$ for a vector bundle $\mathcal V$ of rank $d-1$.

Let $\varphi\colon Z:=\operatorname{Bl}_p(Y)\to Y$ be the blow-up
at $p$, let $D\subset Z$ be the exceptional divisor, and let
$i\colon D\hookrightarrow Z$ be the inclusion. Put
$\widetilde U:=\varphi^{-1}(U)$ and
$P:=\mathbf L\varphi^*\mathcal K_F$.

By Lemma~\ref{lem:reduced-degeneracy}, the maximal-minor ideals of
$\alpha$ and $\beta$ are both the maximal ideal
$\mathfrak m_p$. Their pullbacks to $\widetilde U$ are therefore
equal to $\mathcal O_{\widetilde U}(-D)$. Since the two ranks drop
by exactly one, the elementary modification of
\cite[Section~4]{kaledin06Sin} gives locally free sheaves
$A^{\prime0}$ and $A^{\prime2}$ and a complex
\begin{equation}
\label{eq:modified-complex}
C':=
\bigl[
A^{\prime0}\xrightarrow{\alpha'}
\varphi^*A^1\xrightarrow{\beta'}A^{\prime2}
\bigr],
\end{equation}
where $A^{\prime0}\subset\varphi^*A^1$ is a subbundle,
$\varphi^*A^1\to A^{\prime2}$ is surjective, and
\[
\varphi^*A^0\subset A^{\prime0},
\qquad
A^{\prime2}\subset\varphi^*A^2.
\]

There are line bundles $\mathcal L,\mathcal N$ on $D$ determined
by
\[
i_*\mathcal L=\varphi^*A^2/A^{\prime2},
\qquad
i_*\mathcal N=A^{\prime0}/\varphi^*A^0.
\]
Derived base change along $D\to p$ gives $\mathcal L\simeq
\operatorname{Ext}^2_{\mathcal C}(F,F)\otimes\mathcal O_D
\simeq\mathcal O_D.$ Moreover,
\[
\mathcal N\otimes\mathcal O_D(-D)
\simeq
\operatorname{Tor}_1^Z(i_*\mathcal N,\mathcal O_D)
\simeq
\operatorname{Hom}_{\mathcal C}(F,F)\otimes\mathcal O_D
\simeq\mathcal O_D,
\]
and hence $\mathcal N\simeq\mathcal O_D(D)$.

We now globalize the modification. On $\widetilde U$, set
\[
C=
\bigl[\varphi^*A^0\to\varphi^*A^1\to\varphi^*A^2\bigr],
\qquad
C_0=
\bigl[A^{\prime0}\to\varphi^*A^1\to\varphi^*A^2\bigr].
\]
There are short exact sequences of complexes
\begin{equation}
\label{eq:local-modification-sequences}
0\longrightarrow C\longrightarrow C_0
\longrightarrow i_*\mathcal N\longrightarrow0,
\qquad
0\longrightarrow C'\longrightarrow C_0
\longrightarrow i_*\mathcal L[-2]\longrightarrow0.
\end{equation}
Over $\widetilde U\setminus D$, all three complexes agree with
$P$. Gluing $C_0$ and $C'$ with $P|_{Z\setminus D}$ gives
perfect complexes $P_0$ and $P'$ on $Z$, fitting into
distinguished triangles
\[
P\longrightarrow P_0\longrightarrow i_*\mathcal N
\longrightarrow P[1],
\qquad
P'\longrightarrow P_0\longrightarrow i_*\mathcal L[-2]
\longrightarrow P'[1].
\]
The first map in $C'$ is a subbundle embedding and the second is
surjective, so $C'$ has a single locally free cohomology sheaf in
degree one. Together with
$P|_{Z\setminus D}\simeq\varphi^*\mathcal V[-1]$, this shows that
$P'\simeq\mathcal W'[-1]$ for a vector bundle $\mathcal W'$ on
$Z$ of rank $d-1$.

Taking classes in $K_0^{\mathrm{perf}}(Z)$ in the two triangles
gives
\begin{equation}
\label{eq:modified-k-class}
[\mathcal W']
=
-[\mathbf L\varphi^*\mathcal K_F]
+[i_*\mathcal O_D]-[i_*\mathcal O_D(D)].
\end{equation}

Because $F$ and $G$ have the same numerical class, the explicit
description of $K_{\mathrm{num}}(\mathcal C)$ implies that they have
the same rational Chern character. Relative
Grothendieck--Riemann--Roch therefore gives
$\operatorname{ch}(\mathcal K_F)=\operatorname{ch}(\mathcal K_G)$.
Since $\mathcal K_G\simeq\mathcal W[-1]$, it follows that
$c(-\mathcal K_F)=c(\mathcal W)$.

Taking Chern classes in $A^*(Z)_{\mathbb Q}$ in
\eqref{eq:modified-k-class}, we obtain
\[
c(\mathcal W')
=
\varphi^*c(\mathcal W)\,
\frac{c(i_*\mathcal O_D)}
     {c(i_*\mathcal O_D(D))}.
\]
The standard exact sequences associated with $D$ give
\begin{equation}
\label{eq:correction-chern-class}
\frac{c(i_*\mathcal O_D)}
     {c(i_*\mathcal O_D(D))}
=
\frac{1}{(1-D)(1+D)}
=
1+D^2+D^4+\cdots.
\end{equation}

There are no mixed terms in positive degree. Indeed, for $j,k>0$,
the projection formula gives
\[
\varphi^*c_j(\mathcal W)\cdot D^k
=
i_*\!\left(
c_j(i^*\varphi^*\mathcal W)
c_1(\mathcal O_D(D))^{k-1}
\right)=0,
\]
because $i^*\varphi^*\mathcal W
\simeq\mathcal W_p\otimes\mathcal O_D$ is trivial. Since $d$ is
even, the codimension-$d$ part of
\eqref{eq:correction-chern-class} gives
\begin{equation}
\label{eq:top-chern-contradiction}
c_d(\mathcal W')
=
\varphi^*c_d(\mathcal W)+D^d.
\end{equation}
Both $\mathcal W$ and $\mathcal W'$ have rank $d-1$, so their
$d$-th Chern classes vanish. Thus
\eqref{eq:top-chern-contradiction} implies $D^d=0$. Therefore
\[
\deg(D^d)
=
\int_D c_1\bigl(\mathcal O_D(D)\bigr)^{d-1}
=
(-1)^{d-1}\operatorname{mult}_p(Y)\ne0,
\]
a contradiction. Hence $M$ is connected. Since $M$ is normal by
\cite{FGLZ25}[Proposition 5.1], its irreducible components are open and closed, and
therefore $M$ is irreducible.
\end{proof}

\begin{remark}
When $v^2$ is even, $d$ is odd and the
correction factor has no codimension-$d$ term. The first
nontrivial identity occurs in codimension $d-1$, where the
Chern classes of the rank-$(d-1)$ bundles need not vanish.
Thus the argument does not decide connectedness for primitive
classes of even square.
\end{remark}

\section{Kuznetsov component of Gushel-Mukai threefold}
In this section, we combine the normality and connectedness results of the previous two sections to prove the main theorem. We first establish a deformation lemma for proper families with normal equidimensional fibres, and then construct a singular irreducible special fibre whose irreducibility deforms to a general Gushel--Mukai threefold.

\subsection{An Deformation Lemma}

\begin{lemma} \label{an_ag_lemma}
    Let $M$ be a scheme, and $C$ an irreducible smooth curve. If $\pi: M \rightarrow C$ is a proper morphism such that 
    \begin{enumerate}
        \item each fibre of $\pi$ is normal and of equidimension $d$;
        \item there exists a point $a \in C$ whose fibre $M_a$ is irreducible,
    \end{enumerate}
    then there exists an open subset of $C$ containing $a$ whose fibres are irreducible.
\end{lemma}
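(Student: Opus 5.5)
Since each fibre of $\pi$ is normal, its connected components coincide with its irreducible components. So I would reduce the statement to showing that \emph{connectedness} of fibres propagates from $M_a$ to nearby fibres, and deduce that from Stein factorization. The key input is that Stein factorization of a flat proper morphism with geometrically reduced fibres is étale. Since the ground field is $\mathbb C$, normal fibres are geometrically reduced.

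First I would clean up $M$ so that $\pi$ becomes flat near $a$. Replace $M$ by $M_{\mathrm{red}}$; this does not change the underlying fibres, and I would argue that it does not change their scheme structure near $a$ either. Decompose $M$ into irreducible components. Any component that does not dominate $C$ maps into a finite set of closed points. For a non-dominating component lying over a point $b\neq a$, I would remove $b$ from $C$.

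A non-dominating component lying over $a$ would be contained in $M_a$, which is irreducible of dimension $d$. Any dominating component has closed image equal to $C$, so it meets $M_a$ in dimension at least $d$, forcing it to contain all of $M_a$. Hence such a component is not an irreducible component of $M$, provided $C$ has no empty dominating part. The degenerate case where $M$ does not dominate $C$ at all is trivial after shrinking $C$ to an open set with empty fibres, or to $\{a\}$-avoiding data. After shrinking $C$ around $a$, every component of the reduced scheme $M$ dominates the smooth curve $C$. Hence $\mathcal O_M$ is torsion-free over $\mathcal O_C$, and $\pi$ is flat.

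Next I would consider the Stein factorization $M\to C':=\operatorname{Spec}_C\pi_*\mathcal O_M\to C$, where $C'\to C$ is finite. Because $\pi$ is flat and proper with geometrically reduced fibres, I would invoke the standard result (EGA III, 7.8.6–7.8.7) that $\pi_*\mathcal O_M$ is locally free and its formation commutes with base change. Moreover, $C'\to C$ is finite étale, and for every $c\in C$ the points of $C'$ over $c$ correspond bijectively to the connected components of $M_c$.

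Since $M_a$ is connected, there is exactly one point of $C'$ over $a$ and it is unramified. So the degree of the finite étale cover $C'\to C$ is $1$ on the connected component of $C$ containing $a$, which is all of $C$. Thus $C'\to C$ is an isomorphism over an open neighbourhood of $a$. Over that neighbourhood every fibre $M_c$ is connected, and, being normal, it is irreducible. Equidimensionality is not really needed beyond the dimension comparison used in the first step.

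The step I expect to be the main obstacle is the first one: justifying that the scheme structure of the fibre $M_a$ is unchanged after passing to $M_{\mathrm{red}}$ and discarding non-dominating components. Only then does the base-change result apply to the fibres as given in the hypothesis. If necessary, I would handle this by working on a neighbourhood of $a$ where the fibre-wise reducedness forces $M$ to be reduced along $M_a$, and then use the dimension count above to rule out extra components through $M_a$.
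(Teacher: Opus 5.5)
Your proposal is correct and follows essentially the same route as the paper. Both arguments pass to $M_{\mathrm{red}}$, discard non-dominating components over points $b\neq a$, and exclude one over $a$ by a dimension comparison with the irreducible fibre $M_a$. That comparison uses $\dim W=d+1$ for every dominating component $W$, which follows from hypothesis~(1) at a general fibre; neither you nor the paper spells this out. Both then get flatness from torsion-freeness and use that the number of connected components of the geometrically reduced fibres of a flat proper family is locally constant. Your finite \'etale Stein factorization is exactly the mechanism behind the fact the paper invokes.

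The step you single out as the main obstacle is in fact immediate. The ideal of $M_{\mathrm{red}}\times_C \{c\}$ in $M_c$ is generated by images of nilpotent sections of $\mathcal O_M$, and these vanish in the reduced ring $\mathcal O_{M_c}$. So the fibres are unchanged as schemes over every $c\in C$, not just near $a$.
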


\begin{proof}
    First notice all the fibres of $\pi$ are reduced, so we can replace $M$ with $M_{red}$ without changing fibres. By hypothesis (1), $\pi$ is surjective.
    
    Now notice that for each irreducible component in $M$, its image under $\pi$ is either the entire $C$ or a point. Denote by $M'$ the union of all irreducible components of $M$ that do not dominate $C$. Therefore, the restriction of $\pi$ to $\pi^{-1}(C-S)$ is flat, where $S:=\pi(M')$. As fibres are normal, the number of connected components of fibre should be constant over $C-S$. If $a \notin M'$, then the existence of a connected fibre implies every $M_c$ where $c \in C-S$ is connected. But all these fibres are normal, so they are moreover irreducible.
    
    The only problem here is to exclude the case when $a\in S$. So, let us suppose there exists an irreducible component $Z$ with $\pi(Z) = a$. Take $\eta$ to be the generic point of $C$, and denote $Y:= \overline{M_\eta}$. As $\pi|_{Y}$ is proper and contains $\eta$ in its image, $\pi|_Y$ dominates $C$. Thus, $\pi|_Y$ is flat and therefore its fibres are of equidimension $d$. Note that this shows $Y \cap Z \ne \emptyset$, and $\dim Z = d$ by hypothesis. On the other hand, $\dim (Y\cap Z) = \dim \pi|_Y^{-1}(a) = d$ by previous calculation, so we have $Z \subset Y$ as $Z$ is irreducible. This is absurd since $Z$ is assumed to be an irreducible component of $M$.
\end{proof}

Take the CY2 cover $D^b(S)$ of $\Ku(X)$ where X is the special GM threefold as in \cite[Section 7.2]{PPZ23} and $S$ is a K3 surface, and $\pi_*:D^b(S) \rightarrow \Ku(X)$ be the forgetful functor from $\D^b(S) = \Ku(X)^{\mathbb{Z}_2}$. Also, let 
\[f_*: \sqcup_{\pi_*w = v}M_\sigma(D^b(S),w) \rightarrow M_\sigma(Ku(X),v)^{\mathbb{Z}_2}\]
be the induced surjective morphism of moduli spaces.

\begin{proposition} \label{existence_singular}
    
    For any primitive class $v\in \mathrm{K}_{\mathrm{num}}(\Ku(X))$, there exist some Gushel-Mukai threefold $X$ which makes $M_{\sigma}\bigl(\Ku(X),v\bigr)$ singular.
\end{proposition}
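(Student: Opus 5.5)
By Lemma~\ref{lem:tau–fixed–implies–singular}, it suffices to produce a special GM threefold $X$ together with a $\sigma$-stable object $E\in\Ku(X)$ of class $v$ satisfying $\tau E\simeq E$. Such an object gives a singular point $[E]$ of $M_\sigma(\Ku(X),v)$. I would take $X$ to be a special GM threefold as in \cite[Section~7.2]{PPZ23}, so that $\Ku(X)^{\mathbb Z/2}\simeq \D^b(S)$ for a degree-$10$ K3 surface $S$, with residual involution $\iota$ induced by the covering involution of $S$. The idea is to find a class $w\in\mathrm{K}_{\mathrm{num}}(\D^b(S))$ and an $\omega$-stable object $F$ of class $w$ such that $\pi_*F$ is still $\sigma$-stable. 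Theorem~\ref{thm:cover-stability} then gives $\tau(\pi_*F)\simeq\pi_*(\iota F)$. So whenever $\iota F\simeq F$, the object $\pi_*F$ is $\tau$-fixed. Since $\iota$ fixes $\omega$, the fixed locus $M_\omega(S,w)^{\iota}$ is the natural place to look.

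First I fix the numerics. By the computation in Lemma~\ref{lem:rank-estimate-fixed}, if $\pi_*w=v$ and $w=\lambda$, then $v^2=\lambda^2+(\lambda,\iota\lambda)$. If $\iota\lambda=\lambda$, then $\lambda$ lies in $\Pi_{\Ku(X)}$, so $\lambda=\operatorname{Inf}_*u$. This forces $\pi_*\lambda=2u$, which cannot equal the primitive class $v$. Hence $w$ must not be $\iota$-invariant. Instead I want $\iota F\not\simeq F$ with $\pi_*F\simeq \pi_*\iota F$ stable, and then $E=\pi_*F$ satisfies $\operatorname{Inf}(E)\simeq F\oplus\iota F$, as in Lemma~\ref{lem:rk-equal-ext}. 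So the correct requirement is simply that $\pi_*F$ be $\sigma$-stable. Any such $E$ is automatically $\tau$-fixed, because it carries the equivariant structure coming from $F$: $E=\operatorname{Forg}(F)$, and the forgetful image of an equivariant object is $\tau$-invariant. This reduces the problem to finding some $\omega$-stable $F$ with $\operatorname{Forg}(F)$ stable of class $v$.

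Second, I choose $w$ with $\pi_*w=v$ and $w=\lambda$ in the decomposition $\lambda=\tfrac12\operatorname{Inf}_*v+\tfrac12\delta$, where $\delta$ is anti-invariant and $\delta\perp\Pi_{\Ku(X)}$. The set of such lattice classes is nonempty because $\operatorname{Inf}\circ\operatorname{Forg}\simeq\mathrm{id}\oplus\iota$ and $\operatorname{Forg}_*$ is surjective onto $\mathrm{K}_{\mathrm{num}}(\Ku(X))$. Applying $\operatorname{Inf}$ to a stable object of class $v$, which exists by \cite{PPZ23}, already produces such a $w$. Among these I pick $w$ with $w^2\ge -2$ as large as possible. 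Mukai/Yoshioka nonemptiness on the K3 surface $S$, together with the morphism $f_*$, then gives $\omega$-stable objects $F$ of class $w$ for generic $\omega$. The polystable object $\operatorname{Forg}(F)$ has class $v$, and by Remark~\ref{rmk:injective-Z} its stable summands have classes proportional to $v$. Since $v$ is primitive, $\operatorname{Forg}(F)$ has exactly one summand and is therefore stable.

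The main obstacle is the genericity of $\omega$, because the induced $\omega$ is forced and may lie on a wall for $w$. My first approach is to deform $X$ within the special GM locus, and accordingly $S$ and its involution. This makes $\mathrm{NS}(S)^{\iota\text{-anti}}$ contain a class $\delta$ with $\delta^2$ chosen so that $w^2=\tfrac12(v^2+\delta^2)\ge-2$. It also makes the walls for $w$ avoid the Serre-invariant $\omega$ for a very general such $S$, using the Hodge-theoretic description in \cite{BP23}. The stability of $\operatorname{Forg}(F)$ then follows as above, and Lemma~\ref{lem:tau–fixed–implies–singular} shows that $M_\sigma(\Ku(X),v)$ is singular at $[\operatorname{Forg}F]$.
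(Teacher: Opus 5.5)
Your skeleton agrees with the paper's. You pass to the K3 cover $\D^b(S)$ of a special GM threefold, find $w$ with $\pi_*w=v$ and $w^2\ge -2$, and take an object $F$ of class $w$. Then $\operatorname{Forg}(F)$ is $\tau$-fixed and, by Remark~\ref{rmk:injective-Z} and primitivity of $v$, stable, hence a singular point.

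The gap is the one non-formal step, the existence of $w$, and the reasons you give for it are incorrect.

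First, $\operatorname{Inf}$ of a stable object of class $v$ has class $\operatorname{Inf}_*v$, and $\pi_*\operatorname{Inf}_*v=v+\tau_*v=2v$, not $v$.

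Second, surjectivity of $\operatorname{Forg}_*$ onto $\mathrm K_{\mathrm{num}}(\Ku(X))$ does not follow from $\operatorname{Inf}\circ\operatorname{Forg}\simeq\mathrm{id}\oplus\iota$; formally you only get that the image contains $2\,\mathrm K_{\mathrm{num}}(\Ku(X))$. In fact it fails for a general special $X$. Since the Mukai lattice is even, $\operatorname{Inf}_*\mathrm K_{\mathrm{num}}(\Ku(X))$ (with form $2I$) is primitive. Then $\operatorname{Forg}_*$ is onto exactly when the gluing of this lattice with the anti-invariant algebraic lattice $\Pi_{\Ku(X)}^{\perp}$ realizes the whole discriminant group $(\ZZ/2)^2$. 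That cannot happen when $\Pi_{\Ku(X)}^{\perp}$ has rank one, i.e.\ when $S$ has Picard rank one.

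Third, even when lifts exist, $w^2\ge-2$ is not automatic. With $\delta=w-\iota w$ one has $w=\tfrac12(\operatorname{Inf}_*v+\delta)$ and $w^2=\tfrac12v^2+\tfrac14\delta^2$ (not $\tfrac12(v^2+\delta^2)$), where $\delta^2\le0$. So you need an anti-invariant algebraic class $\delta$ with $\operatorname{Inf}_*v+\delta$ divisible by $2$ and $\delta^2\ge-2v^2-8$. ``Pick $w$ with $w^2\ge-2$'' presupposes all of this, and your last paragraph only gestures at choosing $S$ so that it holds. This lattice-theoretic existence statement, on a suitably special $X$, is exactly what the paper imports from \cite[Lemmas~7.7--7.9]{PPZ23}. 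You need that citation or an actual construction.

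Conversely, the obstacle you single out is not one, so drop the deformation to a very general $S$; a very general $S$ would also lose the special lattice you need for $\delta$. Neither $\omega$-stability of $F$ nor genericity of $\omega$ is needed. By Theorem~\ref{thm:cover-stability}(1), any $\omega$-semistable $F$ of class $w$ has $\operatorname{Forg}(F)$ $\sigma$-semistable of the primitive class $v$, hence $\sigma$-stable since $Z$ is injective. Applying the same argument to Jordan--H\"older factors shows that every $\omega$-semistable object of class $w$ is already $\omega$-stable, even though $Z_\omega$ kills all anti-invariant classes. So, as in the paper, all you use is nonemptiness of the semistable moduli space for the geometric stability condition on $S$, given $w^2\ge-2$.

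Finally, $\iota$ is not induced by an involution of $S$, so $\mathrm{NS}(S)^{\iota\text{-anti}}$ is the wrong place to look for $\delta$. For $\iota$-invariant $\lambda$ one has $2\lambda=\operatorname{Inf}_*\operatorname{Forg}_*\lambda\in\Pi_{\Ku(X)}$, which is positive definite by Lemma~\ref{lem:inflation-pairing}. A geometric involution would instead fix the indefinite plane spanned by the Mukai vectors of $\oh_S$ and of a point.
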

\begin{proof}
    With lemma \ref{lem:tau–fixed–implies–singular} and \cite[Theorem 1.8(1)]{PPZ23}, it is enough to show that there exist a K3 surface $S$ with $w\in \mathrm{K}_{num}(S)$ such that $\pi_*(w) = v$. However, this comes from \cite[Lemma 7.7, 7.8 and 7.9]{PPZ23}. Because $M_{\sigma_S}(D^b(S),w) \ne \emptyset$ as $\sigma_S$ is geometric by \cite[Lemma 7.2]{PPZ23} and the fact $w^2 \ge -2$, we are done with the proof of the lemma.
\end{proof}

\subsection{Deformation to general X}

\begin{theorem}\label{prim_irreducibility}
Let $v_0$ be a fixed primitive numerical class with $v_0^2$ odd. Then, for a general smooth Gushel--Mukai threefold $X$, the moduli space $M_\sigma(\operatorname{Ku}(X),v_0)$ is irreducible.
\end{theorem}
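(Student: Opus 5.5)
The plan is to deform from a special singular fibre, as outlined in the introduction. First I dispose of the case $v_0^2=1$: by \cite{JLLZ} the two square-one moduli spaces for general $X$ are identified with the minimal model of the Fano surface of conics and with a Gieseker moduli space of rank-two sheaves, and both are irreducible. Since $\mathrm{K}_{\mathrm{num}}$ has quadratic form $x^2+y^2$ and no primitive class has square $3$, we may assume from now on that $v_0^2\ge 5$. Then Theorem~\ref{thm:normality-main} (with $m=1$) shows that $M_\sigma(\Ku(X),v_0)$ is normal for every smooth GM threefold $X$.

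Next I produce an irreducible special fibre. By Proposition~\ref{existence_singular} there is a (special) GM threefold $X_a$ whose moduli space $M_\sigma(\Ku(X_a),v_0)$ contains a $\tau$-fixed stable object, and is therefore singular. It is normal by the previous step, $v_0$ is primitive and $v_0^2$ is odd, so Theorem~\ref{thm:singular-connectedness} shows that it is irreducible.

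Now I spread this out. The moduli stack of smooth GM threefolds is irreducible, so I choose a smooth irreducible curve $C$ together with a family $\mathcal X\to C$ of smooth GM threefolds passing through $X_a$ at $a\in C$ and whose generic member is general. Over $C$, stability conditions in families \cite{bayer2021stability,PPZ23} provide a relative Serre-invariant stability condition and a proper relative good moduli space $\pi\colon\mathcal M\to C$ with fibres $M_\sigma(\Ku(X_c),v_0)$. Here one has to check that the relative stability condition restricts, on each fibre, to a Serre-invariant one. This holds up to the $\widetilde{\mathrm{GL}}_2^+(\mathbb R)$-action, so the fibres are indeed the moduli spaces in question.

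Every fibre is normal by Theorem~\ref{thm:normality-main}. Each fibre is also equidimensional of dimension $d=v_0^2+1$: it is nonempty by \cite{PPZ23}, normal so its components are disjoint, and each component has a smooth stable point $E$ with $E\not\simeq\tau E$ by Lemma~\ref{lem:tau–fixed–implies–singular}, where the dimension equals $\ext^1(E,E)=v_0^2+1$. Lemma~\ref{an_ag_lemma} then gives an open neighbourhood of $a$ in $C$ over which the fibres are irreducible.

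Finally, I pass to the moduli of GM threefolds. The locus where the fibre is irreducible is constructible, and it contains the generic point of every such curve through $a$. Hence it contains the generic point of the irreducible moduli space, and therefore a nonempty Zariski-open subset.

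I expect the main obstacle to be the construction of the relative moduli space: ensuring that $\pi$ is proper, that its fibres are exactly the absolute Bridgeland moduli spaces for Serre-invariant $\sigma$, and handling the transition between special and ordinary GM threefolds in one family. I would first try to use the relative construction of \cite[Section~7]{PPZ23}, which is built precisely for such families.
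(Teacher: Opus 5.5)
Your proposal is correct and follows the paper's proof. A singular special fibre is made irreducible by normality together with Theorem~\ref{thm:singular-connectedness}, and this irreducibility is then spread through the proper relative moduli space using Lemma~\ref{an_ag_lemma}.

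The only deviation is the final globalization step. The paper uses generic flatness and the local constancy of the number of geometric connected components over the flat locus. You instead use constructibility of the irreducible-fibre locus together with curves through $a$. Your version works, provided the relative moduli space is built over an \'etale chart of the moduli stack rather than over a single curve. Your separate treatment of $v_0^2=1$ via \cite{JLLZ} matches the paper's introduction.
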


\begin{proof}
Proposition~\ref{existence_singular} gives a smooth GM threefold $X_0$ and a Serre-invariant stability condition $\sigma_0$ such that $M_0:=M_{\sigma_0}(\Ku(X_0),v_0)$ is singular. Theorem~\ref{thm:normality-main} or \cite[Proposition 5.1]{FGLZ25} shows that $M_0$ is normal, and Theorem~\ref{thm:singular-connectedness} then shows that $M_0$ is irreducible.

Let $\mathfrak M_3^{\mathrm{GM}}$ be the moduli stack of smooth GM threefolds. By \cite[Proposition~A.2]{kuznetsov2018derived}, it is a smooth irreducible Deligne--Mumford stack of finite type over $\mathbb C$. Choose an affine étale neighborhood $S\to\mathfrak M_3^{\mathrm{GM}}$ of $[X_0]$, with a point $t_0\in S$ over $[X_0]$, and shrink it so that $S$ is smooth and irreducible. Let $\pi\colon\mathcal X\to S$ be the induced family. By \cite[Lemma~5.9]{BP23}, this family has a relative Kuznetsov component $\Ku(\mathcal X/S)$, and after a further étale shrinking we may trivialize the relative numerical lattice and extend $v_0$ to a relative class $\mathbf v_0$.

By \cite[Corollary~26.2]{bayer2021stability}, after shrinking $S$ once more there is a stability condition $\underline\sigma=(\sigma_s)_{s\in S}$ on $\Ku(\mathcal X/S)$ over $S$, whose restriction to each fibre is one of the standard Serre-invariant stability conditions. Remark~\ref{rmk:injective-Z} and the primitivity of $\mathbf v_{0,s}$ imply that semistability and stability coincide in class $\mathbf v_{0,s}$ for every geometric point $s$. Hence \cite[Theorem~21.24(3)]{bayer2021stability} gives a proper algebraic space $p\colon\mathcal M:=M_{\underline\sigma}(\Ku(\mathcal X/S),\mathbf v_0)\to S$, whose closed fibre over $s$ is $M_{\sigma_s}(\Ku(\mathcal X_s),\mathbf v_{0,s})$. These fibres are nonempty by \cite[Theorem~1.3(1)]{PPZ23}, normal by Theorem~\ref{thm:normality-main}, and pure of dimension $d=v_0^2+1$ by \cite[Proposition~5.1(i)]{FGLZ25}. Moreover, we know that $\sigma_{t_0}$ and $\sigma_0$ lie in the same $\widetilde{\mathrm{GL}}_2^+(\mathbb R)$-orbit and therefore define the same stable objects; hence $\mathcal M_{t_0}\simeq M_0$ is irreducible.

By generic flatness, there is a dense open subset $S^{\mathrm{fl}}\subset S$ over which $p$ is flat. If $t_0\in S^{\mathrm{fl}}$, set $s_1=t_0$. Otherwise choose an integral curve $C\subset S$ through $t_0$ and meeting $S^{\mathrm{fl}}$, let $\nu\colon\widetilde C\to C$ be its normalization, and choose $c_0\in\widetilde C$ over $t_0$. The base change $\mathcal M\times_S\widetilde C\to\widetilde C$ satisfies Lemma~\ref{an_ag_lemma}, so its fibres are irreducible over an open neighborhood $V$ of $c_0$. Since $\nu^{-1}(S^{\mathrm{fl}})$ is a nonempty open subset of $\widetilde C$, we may choose a closed point $c_1\in V\cap\nu^{-1}(S^{\mathrm{fl}})$ and set $s_1=\nu(c_1)$. In either case, $s_1\in S^{\mathrm{fl}}$ and $\mathcal M_{s_1}$ is irreducible.

Now restrict $p$ to $S^{\mathrm{fl}}$. By \cite[Tag~0E0C]{stacks37.27.1}, the locus where the geometric fibre is reduced is open. It contains every closed point because every closed fibre is normal, and therefore it is all of $S^{\mathrm{fl}}$, since $S^{\mathrm{fl}}$ is of finite type over $\mathbb C$ and hence Jacobson. Thus \cite[Tag~0E1E]{stacks37.27.1} shows that the number of geometric connected components of the fibres is locally constant on $S^{\mathrm{fl}}$. Since $S^{\mathrm{fl}}$ is irreducible and $\mathcal M_{s_1}$ is irreducible, every geometric fibre over $S^{\mathrm{fl}}$ is connected. In particular, every closed fibre is connected; being also normal, it is irreducible.

Let $\mathfrak U_{v_0}\subset\mathfrak M_3^{\mathrm{GM}}$ be the image of $S^{\mathrm{fl}}$. Since $S\to\mathfrak M_3^{\mathrm{GM}}$ is étale, $\mathfrak U_{v_0}$ is a nonempty open substack, and it is dense because $\mathfrak M_3^{\mathrm{GM}}$ is irreducible. Therefore the required moduli space is irreducible for a general GM threefold and for the stability condition supplied by the family. Finally, since every Serre-invariant stability condition lies in the same $\widetilde{\mathrm{GL}}_2^+(\mathbb R)$-orbit, which does not change the stable objects or their moduli space. This proves the theorem.
\end{proof}

\begin{remark}
One possible approach to irreducibility for nonprimitive classes
$mv_0$ is to use induction in $m$, as the strategy of
\cite[Theorem~4.4]{kaledin06Sin}. By
normality, irreducibility would then follow if all irreducible
components met the strictly semistable locus. Actually, suppose that there exists a component $Y$ consisting of merely stable objects, then using the notation in Theorem \ref{thm:singular-connectedness}, one can set $G := E_0^{\oplus m}$ where $E_0 \in M_\sigma(\mathcal{C},v_0)$. Furthermore, replace $Y$ by a
projective integral variety $f: Y'\to Y$, and consider 
\[
\mathcal W[-1] \simeq \mathbf R\pi_{Y'*}\mathbf R\mathcal Hom
\bigl(\pi_X^*G,\mathcal E\bigr)
\]
as usual. By applying Mukai's trick again one can show that such a component must not exist for $M_\sigma(\mathcal{C},mv_0)$, if $(mv_0)^2$ is odd. However, the contradiction should not present again for even-square characters, so the induction fails.

\end{remark}

%
%
%



\bibliographystyle{alpha}
{\small{\bibliography{stab}}}

@article{bayer2017stability,
  title={{Stability conditions on Kuznetsov components}},
  author={Bayer, Arend and Lahoz, Mart{\'\i} and Macr{\`\i}, Emanuele and Stellari, Paolo},
  journal={(Appendix joint with Xiaolei Zhao) To appear in Ann. Sci. {\'E}c. Norm. Sup{\'e}r., arXiv:1703.10839},
  year={2017}
}

@article{JLLZ,
  title={{Categorical Torelli theorems for Gushel--Mukai threefolds}},
  author={Jacovskis, Augustinas and Lin, Xun and Liu, Zhiyu and Zhang, Shizhuo},
  journal={arXiv preprint, arXiv:2108.02946},
  year={2021}
}

@article{bridgeland,
 author = {Tom Bridgeland},
 journal = {Annals of Mathematics},
 number = {2},
 pages = {317--345},
 publisher = {Annals of Mathematics},
 title = {{Stability conditions on triangulated categories}},
 volume = {166},
 year = {2007}
}

@article{kuznetsov2018derived,
  title={{Derived categories of Gushel--Mukai varieties}},
  author={Kuznetsov, Alexander and Perry, Alexander},
  journal={Compositio Mathematica},
  volume={154},
  number={7},
  pages={1362--1406},
  year={2018},
  publisher={London Mathematical Society}
}

@article{kuznetsov2009derived,
  title={{Derived categories of Fano threefolds}},
  author={Kuznetsov, Alexander},
  journal={Proceedings of the Steklov Institute of Mathematics},
  volume={264},
  number={1},
  pages={110--122},
  year={2009},
  publisher={Springer}
}

@article{FeyzbakhshPertusi2021stab,
title={{Serre-invariant stability conditions and Ulrich bundles on cubic threefolds}},
author={Feyzbakhsh, Soheyla and Pertusi, Laura},
journal={arXiv preprint, arXiv: 2109.13549},
year={2021}
}

@article{ppzEnriques2021,
  title={{Moduli spaces of stable objects in Enriques categories}},
  author={Perry, Alexander and Pertusi, Laura and Zhao, Xiaolei},
  journal={In preparation},
  year={2021}
}

@article{bayer2021stability,
  title={Stability conditions in families},
  author={Bayer, Arend and Lahoz, Mart{\'\i} and Macr{\`\i}, Emanuele and Nuer, Howard and Perry, Alexander and Stellari, Paolo},
  journal={Publications math{\'e}matiques de l'IH{\'E}S},
  pages={1--169},
  year={2021},
  publisher={Springer}
}

@article{PPZ23,
  author    = {Perry, Alexander and Pertusi, Laura and Zhao, Xiaolei},
  title     = {MODULI SPACES OF STABLE OBJECTS IN ENRIQUES CATEGORIES},
  journal   = {arXiv:2305.10702},
  year      = {2023},
}

@article{Per21,
  author    = {Pertusi, Laura and Robinett, Ethan},
  title     = {Stability conditions on Kuznetsov components of Gushel--Mukai threefolds and Serre functor},
  journal   = {Math.\ Nachr.},
  year      = {2021},
  note      = {arXiv:2112.04769}
}

@article{Muk84,
  author    = {Mukai, Shigeru},
  title     = {Symplectic structure of the moduli space of sheaves on an abelian or K3 surface},
  journal   = {Invent. Math.},
  volume    = {77},
  pages     = {101--116},
  year      = {1984}
}

@article{kaledin06Sin,
  author    = {Kaledin, Dmitry and Lehn, Manfred and Sorger, Christoph},
  title     = {Singular symplectic moduli spaces},
  journal   = {Inventiones Mathematicae},
  volume    = {164},
  number    = {3},
  pages     = {591--614},
  year      = {2006},
  publisher = {Springer},
  doi       = {10.1007/s00222-005-0484-6},
  eprint    = {math/0504202},
  archivePrefix = {arXiv},
  primaryClass = {math.AG}
}

@article{DK18,
  author    = {Olivier Debarre and Alexander Kuznetsov},
  title     = {Gushel–Mukai varieties: classification and birationalities},
  journal   = {Algebraic Geometry},
  volume    = {5},
  year      = {2018},
  pages     = {15--76},
}

@article{BLMNSP21,
  author    = {Arend Bayer and Martí Lahoz and Emanuele Macrì and Howard Nuer and Alexander Perry and Paolo Stellari},
  title     = {Stability conditions in families},
  journal   = {Publications Mathématiques de l'IHÉS},
  year      = {2021},
  note      = {to appear, \url{https://arxiv.org/abs/2102.12514}},
}

@misc{stacks37.27.1,
  author    = {Stacks},
  title     = {Stacks Project},
  howpublished = {\url{https://stacks.math.columbia.edu}},
  note      = {Proposition 37.27.1},
  year      = {},
}

@article{ChenPertusiZhao2024,
  author       = {Chen, Huachen and Pertusi, Laura and Zhao, Xiaolei},
  title        = {Some remarks about deformation theory and formality conjecture},
  journal      = {Annali dell'Universit\`a di Ferrara},
  volume       = {70},
  number       = {3},
  pages        = {761--779},
  year         = {2024},
  doi          = {10.1007/s11565-024-00500-0}
}

@article{ArbarelloSacca2018,
  author       = {Arbarello, Enrico and Sacc\`a, Giulia},
  title        = {Singularities of moduli spaces of sheaves on K3 surfaces and Nakajima quiver varieties},
  journal      = {Advances in Mathematics},
  volume       = {329},
  pages        = {649--703},
  year         = {2018},
  doi          = {10.1016/j.aim.2018.02.016}
}

@article{Kim98,
  author  = {Kim, Hoil},
  title   = {Moduli spaces of stable vector bundles on {Enriques} surfaces},
  journal = {Nagoya Math. J.},
  volume  = {150},
  pages   = {85--94},
  year    = {1998},
  doi     = {10.1017/S002776300002506X}
}

@article{Kim06,
  author  = {Kim, Hoil},
  title   = {Stable vector bundles of rank two on {Enriques} surfaces},
  journal = {J. Korean Math. Soc.},
  volume  = {43},
  number  = {4},
  pages   = {765--782},
  year    = {2006},
  doi     = {10.4134/JKMS.2006.43.4.765}
}

@article{Hau10,
  author  = {Hauzer, Marcin},
  title   = {On moduli spaces of semistable sheaves on {Enriques} surfaces},
  journal = {Ann. Polon. Math.},
  volume  = {99},
  number  = {3},
  pages   = {305--321},
  year    = {2010},
  doi     = {10.4064/ap99-3-7}
}

@article{Nue16a,
  author  = {Nuer, Howard},
  title   = {A note on the existence of stable vector bundles on {Enriques} surfaces},
  journal = {Selecta Math. (N.S.)},
  volume  = {22},
  number  = {3},
  pages   = {1117--1156},
  year    = {2016},
  doi     = {10.1007/s00029-015-0218-6},
  eprint  = {1406.3328},
  archivePrefix = {arXiv},
  primaryClass  = {math.AG}
}

@article{Nue16b,
  author  = {Nuer, Howard},
  title   = {Projectivity and birational geometry of {Bridgeland} moduli spaces on an {Enriques} surface},
  journal = {Proc. Lond. Math. Soc. (3)},
  volume  = {113},
  number  = {3},
  pages   = {345--386},
  year    = {2016},
  doi     = {10.1112/plms/pdw033},
  eprint  = {1406.0908},
  archivePrefix = {arXiv},
  primaryClass  = {math.AG}
}

@article{Yos17,
  author  = {Yoshioka, K{\=o}ta},
  title   = {A note on stable sheaves on {Enriques} surfaces},
  journal = {Tohoku Math. J. (2)},
  volume  = {69},
  number  = {3},
  pages   = {369--382},
  year    = {2017},
  doi     = {10.2748/tmj/1505181622},
  eprint  = {1410.1794},
  archivePrefix = {arXiv},
  primaryClass  = {math.AG}
}

@article{Yos18,
  author  = {Yoshioka, K{\=o}ta},
  title   = {Moduli spaces of stable sheaves on {Enriques} surfaces},
  journal = {Kyoto J. Math.},
  volume  = {58},
  number  = {4},
  pages   = {865--914},
  year    = {2018},
  doi     = {10.1215/21562261-2017-0037},
  eprint  = {1602.06914},
  archivePrefix = {arXiv},
  primaryClass  = {math.AG}
}

@article{Bec20,
  author  = {Beckmann, Thorsten},
  title   = {Birational geometry of moduli spaces of stable objects on {Enriques} surfaces},
  journal = {Selecta Math. (N.S.)},
  volume  = {26},
  number  = {1},
  pages   = {Paper No. 14, 18 pp.},
  year    = {2020},
  doi     = {10.1007/s00029-020-0540-5},
  eprint  = {1810.02165},
  archivePrefix = {arXiv},
  primaryClass  = {math.AG}
}

@article{NY20,
  author  = {Nuer, Howard and Yoshioka, K{\=o}ta},
  title   = {{MMP} via wall-crossing for moduli spaces of stable sheaves on an {Enriques} surface},
  journal = {Adv. Math.},
  volume  = {372},
  pages   = {107283},
  year    = {2020},
  doi     = {10.1016/j.aim.2020.107283},
  eprint  = {1901.04848},
  archivePrefix = {arXiv},
  primaryClass  = {math.AG}
}

@article{Yos99,
  author  = {Yoshioka, K{\=o}ta},
  title   = {Irreducibility of moduli spaces of vector bundles on {K3} surfaces},
  journal = {arXiv preprint},
  year    = {1999},
  eprint  = {math/9907001},
  archivePrefix = {arXiv},
  primaryClass  = {math.AG}
}

@article{FGLZcubic,
  author  = {Soheyla Feyzbakhsh and Hanfei Guo and Zhiyu Liu and Shizhuo Zhang},
  title   = {{Double EPW cubes from twisted cubics on Gushel--Mukai fourfolds}},
  journal = {arXiv preprint},
  year    = {2025},
  eprint  = {2501.12964},
  archivePrefix = {arXiv},
  primaryClass  = {math.AG}
}

@article{BM14a,
  author  = {Bayer, Arend and Macr{\`i}, Emanuele},
  title   = {Projectivity and birational geometry of {Bridgeland} moduli spaces},
  journal = {J. Amer. Math. Soc.},
  volume  = {27},
  number  = {3},
  pages   = {707--752},
  year    = {2014},
  doi     = {10.1090/S0894-0347-2014-00790-6},
  eprint  = {1203.4613},
  archivePrefix = {arXiv},
  primaryClass  = {math.AG}
}

@article{BM14b,
  author  = {Bayer, Arend and Macr{\`i}, Emanuele},
  title   = {{MMP} for moduli of sheaves on {K3}s via wall-crossing: nef and movable cones, {Lagrangian} fibrations},
  journal = {Invent. Math.},
  volume  = {198},
  number  = {3},
  pages   = {505--590},
  year    = {2014},
  doi     = {10.1007/s00222-014-0501-8},
  eprint  = {1301.6968},
  archivePrefix = {arXiv},
  primaryClass  = {math.AG}
}

@article{PPZ22,
  author  = {Perry, Alexander and Pertusi, Laura and Zhao, Xiaolei},
  title   = {Stability conditions and moduli spaces for {Kuznetsov} components of {Gushel--Mukai} varieties},
  journal = {Geom. Topol.},
  volume  = {26},
  number  = {7},
  pages   = {3055--3121},
  year    = {2022},
  doi     = {10.2140/gt.2022.26.3055},
  eprint  = {1912.06935},
  archivePrefix = {arXiv},
  primaryClass  = {math.AG}
}

@article{APR22,
  author  = {Altavilla, Matteo and Petkovi{\'c}, Marin and Rota, Franco},
  title   = {Moduli spaces on the {Kuznetsov} component of {Fano} threefolds of index 2},
  journal = {{\'E}pijournal G{\'e}om. Alg{\'e}brique},
  volume  = {6},
  pages   = {Article No. 13},
  year    = {2022},
  doi     = {10.46298/epiga.2022.7047},
  eprint  = {1908.10986},
  archivePrefix = {arXiv},
  primaryClass  = {math.AG}
}

@article{LLPZ24,
  author  = {Li, Chunyi and Lin, Yinbang and Pertusi, Laura and Zhao, Xiaolei},
  title   = {Higher-dimensional moduli spaces on {Kuznetsov} components of {Fano} threefolds},
  journal = {J. Reine Angew. Math.},
  volume  = {832},
  pages   = {81--151},
  year    = {2026},
  doi     = {10.1515/crelle-2026-0001},
  eprint  = {2406.09124},
  archivePrefix = {arXiv},
  primaryClass  = {math.AG}
}

@article{Sac23,
  author  = {Sacc{\`a}, Giulia},
  title   = {Moduli spaces on {Kuznetsov} components are irreducible symplectic varieties},
  journal = {arXiv preprint},
  year    = {2023},
  eprint  = {2304.02609},
  archivePrefix = {arXiv},
  primaryClass  = {math.AG}
}

@article{AHR20,
  author  = {Alper, Jarod and Hall, Jack and Rydh, David},
  title   = {A Luna \'{e}tale slice theorem for algebraic stacks},
  journal = {Annals of Mathematics},
  volume  = {191},
  year    = {2020},
  number  = {3},
  pages   = {675--738}
}

@article{FGLZ25,
  author  = {Feyzbakhsh, Soheyla and Guo, Hanfei and
             Liu, Zhiyu and Zhang, Shizhuo},
  title   = {Lagrangian families of {Bridgeland} moduli spaces
             from {Gushel--Mukai} fourfolds},
  journal = {Compositio Mathematica},
  volume  = {161},
  number  = {8},
  pages   = {2091--2135},
  year    = {2025},
  doi     = {10.1112/S0010437X25007468}
}

@article{BP23,
  author  = {Bayer, Arend and Perry, Alexander},
  title   = {Kuznetsov's Fano threefold conjecture via
             {K3} categories and enhanced group actions},
  journal = {J. Reine Angew. Math.},
  volume  = {800},
  year    = {2023},
  pages   = {107--153},
  doi     = {10.1515/crelle-2023-0021}
}

@article{Per19,
  author  = {Pertusi, Laura},
  title   = {On the double {EPW} sextic associated to a
             {Gushel--Mukai} fourfold},
  journal = {J. Lond. Math. Soc. (2)},
  volume  = {100},
  number  = {1},
  year    = {2019},
  pages   = {83--106},
  doi     = {10.1112/jlms.12205}
}

@article{GiesekerLi96,
  author  = {Gieseker, David and Li, Jun},
  title   = {Moduli of High Rank Vector Bundles over Surfaces},
  journal = {Journal of the American Mathematical Society},
  volume  = {9},
  number  = {1},
  year    = {1996},
  pages   = {107--151},
  doi     = {10.1090/S0894-0347-96-00173-4}
}

@article{OGrady96,
  author  = {O'Grady, Kieran G.},
  title   = {Moduli of Vector Bundles on Projective Surfaces: Some Basic Results},
  journal = {Inventiones Mathematicae},
  volume  = {123},
  number  = {1},
  year    = {1996},
  pages   = {141--207},
  doi     = {10.1007/BF01232371}
}

\end{document}